\newlength{\margins}
\begin{document}

\newcommand{\alp}{\alpha}
\newcommand{\bet}{\beta}
\newcommand{\gam}{\gamma}
\newcommand{\del}{\delta}
\newcommand{\eps}{\epsilon}
\newcommand{\zet}{\zeta}
\newcommand{\tht}{\theta}
\newcommand{\iot}{\iota}
\newcommand{\kap}{\kappa}
\newcommand{\lam}{\lambda}
\newcommand{\sig}{\sigma}
\newcommand{\ups}{\upsilon}
\newcommand{\ome}{\omega}
\newcommand{\vep}{\varepsilon}
\newcommand{\vth}{\vartheta}
\newcommand{\vpi}{\varpi}
\newcommand{\vrh}{\varrho}
\newcommand{\vsi}{\varsigma}
\newcommand{\vph}{\varphi}
\newcommand{\Gam}{\Gamma}
\newcommand{\Del}{\Delta}
\newcommand{\Tht}{\Theta}
\newcommand{\Lam}{\Lambda}
\newcommand{\Sig}{\Sigma}
\newcommand{\Ups}{\Upsilon}
\newcommand{\Ome}{\Omega}


\newcommand{\frka}{{\mathfrak a}}    \newcommand{\frkA}{{\mathfrak A}}
\newcommand{\frkb}{{\mathfrak b}}    \newcommand{\frkB}{{\mathfrak B}}
\newcommand{\frkc}{{\mathfrak c}}    \newcommand{\frkC}{{\mathfrak C}}
\newcommand{\frkd}{{\mathfrak d}}    \newcommand{\frkD}{{\mathfrak D}}
\newcommand{\frke}{{\mathfrak e}}    \newcommand{\frkE}{{\mathfrak E}}
\newcommand{\frkf}{{\mathfrak f}}    \newcommand{\frkF}{{\mathfrak F}}
\newcommand{\frkg}{{\mathfrak g}}    \newcommand{\frkG}{{\mathfrak G}}
\newcommand{\frkh}{{\mathfrak h}}    \newcommand{\frkH}{{\mathfrak H}}
\newcommand{\frki}{{\mathfrak i}}    \newcommand{\frkI}{{\mathfrak I}}
\newcommand{\frkj}{{\mathfrak j}}    \newcommand{\frkJ}{{\mathfrak J}}
\newcommand{\frkk}{{\mathfrak k}}    \newcommand{\frkK}{{\mathfrak K}}
\newcommand{\frkl}{{\mathfrak l}}    \newcommand{\frkL}{{\mathfrak L}}
\newcommand{\frkm}{{\mathfrak m}}    \newcommand{\frkM}{{\mathfrak M}}
\newcommand{\frkn}{{\mathfrak n}}    \newcommand{\frkN}{{\mathfrak N}}
\newcommand{\frko}{{\mathfrak o}}    \newcommand{\frkO}{{\mathfrak O}}
\newcommand{\frkp}{{\mathfrak p}}    \newcommand{\frkP}{{\mathfrak P}}
\newcommand{\frkq}{{\mathfrak q}}    \newcommand{\frkQ}{{\mathfrak Q}}
\newcommand{\frkr}{{\mathfrak r}}    \newcommand{\frkR}{{\mathfrak R}}
\newcommand{\frks}{{\mathfrak s}}    \newcommand{\frkS}{{\mathfrak S}}
\newcommand{\frkt}{{\mathfrak t}}    \newcommand{\frkT}{{\mathfrak T}}
\newcommand{\frku}{{\mathfrak u}}    \newcommand{\frkU}{{\mathfrak U}}
\newcommand{\frkv}{{\mathfrak v}}    \newcommand{\frkV}{{\mathfrak V}}
\newcommand{\frkw}{{\mathfrak w}}    \newcommand{\frkW}{{\mathfrak W}}
\newcommand{\frkx}{{\mathfrak x}}    \newcommand{\frkX}{{\mathfrak X}}
\newcommand{\frky}{{\mathfrak y}}    \newcommand{\frkY}{{\mathfrak Y}}
\newcommand{\frkz}{{\mathfrak z}}    \newcommand{\frkZ}{{\mathfrak Z}}


\newcommand{\bfa}{{\mathbf a}}    \newcommand{\bfA}{{\mathbf A}}
\newcommand{\bfb}{{\mathbf b}}    \newcommand{\bfB}{{\mathbf B}}
\newcommand{\bfc}{{\mathbf c}}    \newcommand{\bfC}{{\mathbf C}}
\newcommand{\bfd}{{\mathbf d}}    \newcommand{\bfD}{{\mathbf D}}
\newcommand{\bfe}{{\mathbf e}}    \newcommand{\bfE}{{\mathbf E}}
\newcommand{\bff}{{\mathbf f}}    \newcommand{\bfF}{{\mathbf F}}
\newcommand{\bfg}{{\mathbf g}}    \newcommand{\bfG}{{\mathbf G}}
\newcommand{\bfh}{{\mathbf h}}    \newcommand{\bfH}{{\mathbf H}}
\newcommand{\bfi}{{\mathbf i}}    \newcommand{\bfI}{{\mathbf I}}
\newcommand{\bfj}{{\mathbf j}}    \newcommand{\bfJ}{{\mathbf J}}
\newcommand{\bfk}{{\mathbf k}}    \newcommand{\bfK}{{\mathbf K}}
\newcommand{\bfl}{{\mathbf l}}    \newcommand{\bfL}{{\mathbf L}}
\newcommand{\bfm}{{\mathbf m}}    \newcommand{\bfM}{{\mathbf M}}
\newcommand{\bfn}{{\mathbf n}}    \newcommand{\bfN}{{\mathbf N}}
\newcommand{\bfo}{{\mathbf o}}    \newcommand{\bfO}{{\mathbf O}}
\newcommand{\bfp}{{\mathbf p}}    \newcommand{\bfP}{{\mathbf P}}
\newcommand{\bfq}{{\mathbf q}}    \newcommand{\bfQ}{{\mathbf Q}}
\newcommand{\bfr}{{\mathbf r}}    \newcommand{\bfR}{{\mathbf R}}
\newcommand{\bfs}{{\mathbf s}}    \newcommand{\bfS}{{\mathbf S}}
\newcommand{\bft}{{\mathbf t}}    \newcommand{\bfT}{{\mathbf T}}
\newcommand{\bfu}{{\mathbf u}}    \newcommand{\bfU}{{\mathbf U}}
\newcommand{\bfv}{{\mathbf v}}    \newcommand{\bfV}{{\mathbf V}}
\newcommand{\bfw}{{\mathbf w}}    \newcommand{\bfW}{{\mathbf W}}
\newcommand{\bfx}{{\mathbf x}}    \newcommand{\bfX}{{\mathbf X}}
\newcommand{\bfy}{{\mathbf y}}    \newcommand{\bfY}{{\mathbf Y}}
\newcommand{\bfz}{{\mathbf z}}    \newcommand{\bfZ}{{\mathbf Z}}


\newcommand{\cala}{{\mathcal A}}
\newcommand{\calb}{{\mathcal B}}
\newcommand{\calc}{{\mathcal C}}
\newcommand{\cald}{{\mathcal D}}
\newcommand{\cale}{{\mathcal E}}
\newcommand{\calf}{{\mathcal F}}
\newcommand{\calg}{{\mathcal G}}
\newcommand{\calh}{{\mathcal H}}
\newcommand{\cali}{{\mathcal I}}
\newcommand{\calj}{{\mathcal J}}
\newcommand{\calk}{{\mathcal K}}
\newcommand{\call}{{\mathcal L}}
\newcommand{\calm}{{\mathcal M}}
\newcommand{\caln}{{\mathcal N}}
\newcommand{\calo}{{\mathcal O}}
\newcommand{\calp}{{\mathcal P}}
\newcommand{\calq}{{\mathcal Q}}
\newcommand{\calr}{{\mathcal R}}
\newcommand{\cals}{{\mathcal S}}
\newcommand{\calt}{{\mathcal T}}
\newcommand{\calu}{{\mathcal U}}
\newcommand{\calv}{{\mathcal V}}
\newcommand{\calw}{{\mathcal W}}
\newcommand{\calx}{{\mathcal X}}
\newcommand{\caly}{{\mathcal Y}}
\newcommand{\calz}{{\mathcal Z}}


\newcommand{\scra}{{\mathscr A}}
\newcommand{\scrb}{{\mathscr B}}
\newcommand{\scrc}{{\mathscr C}}
\newcommand{\scrd}{{\mathscr D}}
\newcommand{\scre}{{\mathscr E}}
\newcommand{\scrf}{{\mathscr F}}
\newcommand{\scrg}{{\mathscr G}}
\newcommand{\scrh}{{\mathscr H}}
\newcommand{\scri}{{\mathscr I}}
\newcommand{\scrj}{{\mathscr J}}
\newcommand{\scrk}{{\mathscr K}}
\newcommand{\scrl}{{\mathscr L}}
\newcommand{\scrm}{{\mathscr M}}
\newcommand{\scrn}{{\mathscr N}}
\newcommand{\scro}{{\mathscr O}}
\newcommand{\scrp}{{\mathscr P}}
\newcommand{\scrq}{{\mathscr Q}}
\newcommand{\scrr}{{\mathscr R}}
\newcommand{\scrs}{{\mathscr S}}
\newcommand{\scrt}{{\mathscr T}}
\newcommand{\scru}{{\mathscr U}}
\newcommand{\scrv}{{\mathscr V}}
\newcommand{\scrw}{{\mathscr W}}
\newcommand{\scrx}{{\mathscr X}}
\newcommand{\scry}{{\mathscr Y}}
\newcommand{\scrz}{{\mathscr Z}}


\newcommand{\AAA}{{\mathbb A}} 
\newcommand{\BB}{{\mathbb B}}
\newcommand{\CC}{{\mathbb C}}
\newcommand{\DD}{{\mathbb D}}
\newcommand{\EE}{{\mathbb E}}
\newcommand{\FF}{{\mathbb F}}
\newcommand{\GG}{{\mathbb G}}
\newcommand{\HH}{{\mathbb H}}
\newcommand{\II}{{\mathbb I}}
\newcommand{\JJ}{{\mathbb J}}
\newcommand{\KK}{{\mathbb K}}
\newcommand{\LL}{{\mathbb L}}
\newcommand{\MM}{{\mathbb M}}
\newcommand{\NN}{{\mathbb N}}
\newcommand{\OO}{{\mathbb O}}
\newcommand{\PP}{{\mathbb P}}
\newcommand{\QQ}{{\mathbb Q}}
\newcommand{\RR}{{\mathbb R}}
\newcommand{\SSS}{{\mathbb S}} 
\newcommand{\TT}{{\mathbb T}}
\newcommand{\UU}{{\mathbb U}}
\newcommand{\VV}{{\mathbb V}}
\newcommand{\WW}{{\mathbb W}}
\newcommand{\XX}{{\mathbb X}}
\newcommand{\YY}{{\mathbb Y}}
\newcommand{\ZZ}{{\mathbb Z}}


\newcommand{\tta}{\hbox{\tt a}}    \newcommand{\ttA}{\hbox{\tt A}}
\newcommand{\ttb}{\hbox{\tt b}}    \newcommand{\ttB}{\hbox{\tt B}}
\newcommand{\ttc}{\hbox{\tt c}}    \newcommand{\ttC}{\hbox{\tt C}}
\newcommand{\ttd}{\hbox{\tt d}}    \newcommand{\ttD}{\hbox{\tt D}}
\newcommand{\tte}{\hbox{\tt e}}    \newcommand{\ttE}{\hbox{\tt E}}
\newcommand{\ttf}{\hbox{\tt f}}    \newcommand{\ttF}{\hbox{\tt F}}
\newcommand{\ttg}{\hbox{\tt g}}    \newcommand{\ttG}{\hbox{\tt G}}
\newcommand{\tth}{\hbox{\tt h}}    \newcommand{\ttH}{\hbox{\tt H}}
\newcommand{\tti}{\hbox{\tt i}}    \newcommand{\ttI}{\hbox{\tt I}}
\newcommand{\ttj}{\hbox{\tt j}}    \newcommand{\ttJ}{\hbox{\tt J}}
\newcommand{\ttk}{\hbox{\tt k}}    \newcommand{\ttK}{\hbox{\tt K}}
\newcommand{\ttl}{\hbox{\tt l}}    \newcommand{\ttL}{\hbox{\tt L}}
\newcommand{\ttm}{\hbox{\tt m}}    \newcommand{\ttM}{\hbox{\tt M}}
\newcommand{\ttn}{\hbox{\tt n}}    \newcommand{\ttN}{\hbox{\tt N}}
\newcommand{\tto}{\hbox{\tt o}}    \newcommand{\ttO}{\hbox{\tt O}}
\newcommand{\ttp}{\hbox{\tt p}}    \newcommand{\ttP}{\hbox{\tt P}}
\newcommand{\ttq}{\hbox{\tt q}}    \newcommand{\ttQ}{\hbox{\tt Q}}
\newcommand{\ttr}{\hbox{\tt r}}    \newcommand{\ttR}{\hbox{\tt R}}
\newcommand{\tts}{\hbox{\tt s}}    \newcommand{\ttS}{\hbox{\tt S}}
\newcommand{\ttt}{\hbox{\tt t}}    \newcommand{\ttT}{\hbox{\tt T}}
\newcommand{\ttu}{\hbox{\tt u}}    \newcommand{\ttU}{\hbox{\tt U}}
\newcommand{\ttv}{\hbox{\tt v}}    \newcommand{\ttV}{\hbox{\tt V}}
\newcommand{\ttw}{\hbox{\tt w}}    \newcommand{\ttW}{\hbox{\tt W}}
\newcommand{\ttx}{\hbox{\tt x}}    \newcommand{\ttX}{\hbox{\tt X}}
\newcommand{\tty}{\hbox{\tt y}}    \newcommand{\ttY}{\hbox{\tt Y}}
\newcommand{\ttz}{\hbox{\tt z}}    \newcommand{\ttZ}{\hbox{\tt Z}}

\newcommand{\phm}{\phantom}
\newcommand{\ds}{\displaystyle }
\newcommand{\smallstrut}{\vphantom{\vrule height 3pt }}
\def\bdm #1#2#3#4{\left(
\begin{array} {c|c}{\ds{#1}}
 & {\ds{#2}} \\ \hline
{\ds{#3}\vphantom{\ds{#3}^1}} &  {\ds{#4}}
\end{array}
\right)}
\newcommand{\wtd}{\widetilde }
\newcommand{\bsl}{\backslash }
\newcommand{\GL}{{\mathrm{GL}}}
\newcommand{\SL}{{\mathrm{SL}}}
\newcommand{\GSp}{{\mathrm{GSp}}}
\newcommand{\PGSp}{{\mathrm{PGSp}}}
\newcommand{\SP}{{\mathrm{Sp}}}
\newcommand{\SO}{{\mathrm{SO}}}
\newcommand{\SU}{{\mathrm{SU}}}
\newcommand{\Ind}{\mathrm{Ind}}
\newcommand{\Hom}{{\mathrm{Hom}}}
\newcommand{\Ad}{{\mathrm{Ad}}}
\newcommand{\Sym}{{\mathrm{Sym}}}
\newcommand{\Mat}{\mathrm{M}}
\newcommand{\sgn}{\mathrm{sgn}}
\newcommand{\trs}{\,^t\!}
\newcommand{\iu}{\sqrt{-1}}
\newcommand{\oo}{\hbox{\bf 0}}
\newcommand{\ono}{\hbox{\bf 1}}
\newcommand{\smallcirc}{\lower .3em \hbox{\rm\char'27}\!}
\newcommand{\bAf}{\bA_{\hbox{\eightrm f}}}
\newcommand{\thalf}{{\textstyle{\frac12}}}
\newcommand{\shp}{\hbox{\rm\char'43}}
\newcommand{\Gal}{\operatorname{Gal}}

\newcommand{\bdel}{{\boldsymbol{\delta}}}
\newcommand{\bchi}{{\boldsymbol{\chi}}}
\newcommand{\bgam}{{\boldsymbol{\gamma}}}
\newcommand{\bome}{{\boldsymbol{\omega}}}
\newcommand{\bpsi}{{\boldsymbol{\psi}}}
\newcommand{\GK}{\mathrm{GK}}
\newcommand{\ord}{\mathrm{ord}}
\newcommand{\diag}{\mathrm{diag}}
\newcommand{\ua}{{\underline{a}}}
\newcommand{\ZZn}{\ZZ_{\geq 0}^n}
\newcommand{\calhnd}{{\mathcal H}^\mathrm{nd}}
\newcommand{\EGK}{\mathrm{EGK}}

\theoremstyle{definition}
\newtheorem{theorem}{Theorem}[section]
\newtheorem{lemma}[theorem]{Lemma}
\newtheorem{proposition}[theorem]{Proposition}
\newtheorem{conjecture}{Conjecture}[section]
\newtheorem{definition}{Definition}[section]
\newtheorem{statement}[theorem]{Statement}
\newtheorem{question}[theorem]{Problem}
\theoremstyle{remark}
\newtheorem{remark}{Remark}[section]
\newtheorem{corollary}[theorem]{Corollary}
\newtheorem{example}{Example}[section]
\numberwithin{equation}{section}

\title[On the local density formula and the Gross-Keating invariant]
{On the local density formula and the Gross-Keating invariant  with an Appendix `The local density of a binary quadratic form' by T. Ikeda and H. Katsurada}

\keywords{quadratic forms, Gross-Keating invariant, local density}
\thanks{The  author is partially supported by JSPS KAKENHI Grant No. 16F16316,  Samsung Science and Technology Foundation under Project Number SSTF-BA1802-03, and NRF-2018R1A4A1023590.}
\subjclass[2010]{MSC 11E08, 11E95, 14L15, 20G25}

\author[Sungmun Cho]{Sungmun Cho}
\address{Sungmun Cho \\ 
Department of Mathematics, POSTECH, 77, Cheongam-ro, Nam-gu, Pohang-si, Gyeongsangbuk-do, 37673, KOREA}
\email{sungmuncho12@gmail.com}

\maketitle

\begin{abstract}
T. Ikeda and H. Katsurada have developed the theory of the  Gross-Keating invariant  of a quadratic form in their recent papers \cite{IK1} and \cite{IK2}.
In particular, they prove that the local factors of the Fourier coefficients of the Siegel-Eisenstein series are completely determined by the Gross-Keating invariants with extra datums, called the extended GK datums, in \cite{IK2}.

On the other hand, such a local factor is a special case of the local density for a pair of two quadratic forms.
Thus we propose a  general question  if the local density can be expressed in terms of a certain series of  
 extended GK datums.

In this paper, we prove that the answer to this question is affirmative, for the local density of a single quadratic form defined over an unramified finite extension of $\mathbb{Z}_2$ and over a finite extension of $\mathbb{Z}_p$ with $p$ odd.
In the appendix, T.~Ikeda and H.~Katsurada compute the local density formula of a single binary quadratic form defined over any finite extension of $\mathbb{Z}_2$.
\end{abstract}

\tableofcontents

\section{Introduction}\label{intro}
In 1993, B. Gross and K. Keating defined a certain invariant of a ternary quadratic form over $\mathbb{Z}_p$, in order to formulate an expression for the arithmetic intersection number of three cycles associated to three modular polynomials over the moduli stack of pairs of elliptic curves in \cite{GK}.
This invariant has
been generalized to quadratic forms of any degree over a local field, and is now
called the Gross-Keating invariant.

The Gross-Keating invariant had been almost forgotten\footnote{The Gross-Keating invariant had been treated in \cite{Bouw}. The first sentence of loc. cit. says `This note provides details on \cite{GK} Section 4.'} for a while after the work of Gross and Keating.
It was T. Ikeda and H. Katsurada who recently developed the theory of the Gross-Keating invariant in \cite{IK1} and discovered its importance to the study of the Fourier coefficients of the Siegel-Eisenstein series\footnote{We follow the notion and the definition of \cite{Katsurada} for the Siegel-Eisenstein series.} with any degree and with any weight (see \cite{IK2}).
Furthermore, it has been revealed that the Gross-Keating invariant plays a key role in investigating an analogy between intersection numbers on orthogonal Shimura varieties  and the Fourier coefficients of the Siegel-Eisenstein series in Kudla's program in \cite{CY}.
A formula for the  Gross-Keating invariant over an unramified finite extension of $\mathbb{Z}_2$ is derived in \cite{CIKY1} and \cite{CIKY2}.  \\

On the other hand, the local density, denoted by $\alpha(L, L')$,  of a pair of two quadratic lattices $(L, Q_L)$ and $(L', Q_{L'})$ 
defined over a finite extension of $\mathbb{Z}_p$, provides  vital information towards computing  the number of representations of a global quadratic form, which is a central problem in the theory of Siegel-Weil formula  as well as in the arithmetic theory of quadratic forms.
Special but important cases of the local density $\alpha(L, L')$ are as follows:
\begin{enumerate}
\item if $L'$ is a hyperbolic space defined over $\mathbb{Z}_p$ so that $L'$ is equivalent to 
\[
\begin{pmatrix} 0& 1/2 \\ 1/2 & 0 \end{pmatrix}\perp \cdots \perp \begin{pmatrix} 0& 1/2 \\ 1/2 & 0 \end{pmatrix},
\]
then the associated local density $\alpha(L, L')$ is the local factor at $p$ of the Fourier coefficients of the Siegel-Eisenstein series.
For a detailed explanation, see \cite{Katsurada}.

\item If $L=L'$, then the local density $\alpha(L, L)$  is the local factor at $p$ of the Smith-Minkowski-Siegel mass formula, which is an essential tool for the classification of integral quadratic lattices (over a finite extension of $\mathbb{Z}$).
We refer to the introduction of \cite{Cho} for the history of the local density of a single quadratic form.
\end{enumerate}

Recently in \cite{IK2}, Ikeda and Katsurada show that the local density in the above case (1) is completely determined by the Gross-Keating invariant with extra datum, called the extended GK datum. 
Along with their observation, we generalize their philosophy formulated in the following question:

\begin{question}\label{question}
Can we give a rule for associating to each pair $(L, L')$ of quadratic lattices a sequence $\mathcal{L}(L,L')$ of quadratic lattices with the property that if $\mathrm{EGK}(\mathcal{L}(L,L'))=\mathrm{EGK}(\mathcal{L}(M,M'))$ as multisets then 
\[
\textit{$\alpha(L, L')=\alpha(M, M')$}?
\]
\end{question}

The purpose of this paper is to   answer  this question in the case (2) listed above when $L=L'$ is defined over a finite unramified extension of $\mathbb{Z}_2$.
In the author's previous paper \cite{Cho}, the local density formula of this case is given in terms of certain smooth group schemes. 
The main theorem of our paper is the following:

\begin{theorem}\label{maintheorem}(Theorem \ref{thm-ldgk})
For a quadratic lattice $(L, Q_L)$ defined over a finite unramified extension $\mathfrak{o}$ of $\mathbb{Z}_2$,
the local density $\alpha(L,L)$ is completely determined by the collection consisting of  $\mathrm{GK}(L\oplus -L)$ 
together with the $\mathrm{EGK}(L\cap 2^i  L^\sharp)^{\leq 1}$ as $i$ runs over all integers  such that $L_i$ is nonzero, where $L=\oplus_i L_i$ is a Jordan splitting.
In other words,  given any two quadratic lattices $L, M$ satisfying 
\[
\left\{
  \begin{array}{l}
 \textit{$\mathrm{GK}(L\oplus -L)=\mathrm{GK}(M\oplus -M)$};\\
 \textit{$\mathrm{EGK}(L\cap 2^i  L^\sharp)^{\leq 1}=\mathrm{EGK}(M\cap 2^i  M^\sharp)^{\leq 1}$ for all $i$},
    \end{array} \right.
\]
we have that 
\[\alpha(L,L)=\alpha(M,M).\]
Here, 
$\GK(L)$ is defined in Definition \ref{def:2.1}, $\EGK(L)$ is defined in Definition \ref{def:3.3}, and $\mathrm{EGK}(L\cap 2^i  L^\sharp)^{\leq 1}$ is defined Definition \ref{tegk}.
The lattice $L^\sharp$ is the dual lattice of $L$ and
the lattice $L\cap 2^i  L^\sharp$ is  characterized as follows:
\[
L\cap 2^i  L^\sharp=\{x\in L \mid \langle x,L\rangle_{Q_L} \in 2^i\frko\},
\]
where 
$\langle -,-\rangle_{Q_L}$ is the symmetric bilinear form associated to $Q_L$ such that $\langle x,x\rangle_{Q_L}=Q_L(x)$ for $x\in L$.

\end{theorem}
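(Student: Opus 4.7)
The plan is to reduce the statement to the explicit local density formula derived in the author's earlier paper \cite{Cho}. That formula expresses $\alpha(L,L)$ as a product of factors read off from the Jordan decomposition $L = \oplus_i L_i$: the ranks $n_i$, the quadratic type of each block $L_i$, and certain dyadic ``type I versus type II'' indicators that arise from the smoothening procedure used to build the group scheme whose reduction mod $2$ computes the density. Consequently, to prove the theorem it suffices to show that each ingredient in that formula is recoverable from the pair of data $\bigl(\GK(L \oplus -L),\; \{\EGK(L \cap 2^i L^\sharp)^{\leq 1}\}_i\bigr)$.

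The first step is to analyze the filtration by the sublattices $L^{(i)} := L \cap 2^i L^\sharp$. For the Jordan splitting $L = \oplus_j L_j$ with $L_j$ of scale $j$, one checks that $L^{(i)}$ admits a Jordan splitting whose leading block is essentially $L_i$ (after the appropriate scale shift). Hence the truncated datum $\EGK(L^{(i)})^{\leq 1}$ detects exactly the top Jordan block of $L^{(i)}$; letting $i$ range over all indices where $L_i \neq 0$ recovers the rank sequence $\{n_i\}$ and the discriminant square class of each individual block $L_i$. This accounts for the ``per-block'' invariants appearing in the formula of \cite{Cho}.

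Next, I would use $\GK(L \oplus -L)$ to capture the remaining ``global'' interaction data. Even once the individual blocks are determined, the formula of \cite{Cho} still depends on finer dyadic invariants — in particular, whether certain diagonal entries in each block can be chosen from the norm image, and how adjacent blocks interact through their Hessian parity. The doubled form $L \oplus -L$ is heavily isotropic, each pair $(L_i, -L_i)$ contributing a family of hyperbolic planes, so $\GK(L \oplus -L)$ is sensitive precisely to this residual interaction data; I would check block by block that it determines the remaining type indicators in the smooth model computation.

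The main obstacle I anticipate is the dyadic combinatorics: in the unramified dyadic setting the smoothening of \cite{Cho} distinguishes several block types whose effect on $\alpha(L,L)$ is subtle and nonlocal. The technical heart will be a case analysis verifying that no two Jordan decompositions agreeing on $\GK(L\oplus -L)$ and on all $\EGK(L^{(i)})^{\leq 1}$ can disagree on any factor appearing in the formula of \cite{Cho}; this should proceed by translating the group-scheme data of \cite{Cho} directly into the combinatorial language of EGK datums developed by Ikeda--Katsurada in \cite{IK1, IK2}.
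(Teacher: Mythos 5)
Your high-level plan — reduce to the explicit formula for $\beta^{C}(L)$ from \cite{Cho} and show every ingredient is recoverable from the stated invariants — is the same as the paper's. But the specific claims about which piece of data determines which ingredient are inverted, and this is not a cosmetic issue: following your plan as written would fail.

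You assert that $\EGK(L\cap 2^i L^\sharp)^{\leq 1}$ "detects exactly the top Jordan block of $L^{(i)}$," so that letting $i$ range "recovers the rank sequence $\{n_i\}$ and the discriminant square class of each individual block $L_i$," and that $\GK(L\oplus -L)$ is only needed for residual interaction data. This is not what the truncated invariant sees. By the paper's Lemmas~\ref{lem48}--\ref{lem:5.4}, $\EGK(L\cap 2^i L^\sharp)^{\leq 1}$ records exactly (a) whether $0$ occurs in $\GK(L\cap 2^i L^\sharp)$, i.e.\ whether $L_i$ is of parity type $I$, and (b) the number $m_i$ of $1$'s, which is $\dim\bar V_i$ where $\bar V_i = B(A_i)/Z(A_i)$ is the reduced quadratic space used in the group-scheme computation, together with whether $\bar q_i$ is split. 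Crucially, $m_i$ is \emph{not} $n_i$: $\bar V_i$ is built from $A_i = L\cap 2^i L^\sharp$ and its dimension is sensitive to $L_{i-1}$ and $L_{i+1}$ as well (for instance if $L=(1)\perp(4)$ over $\ZZ_2$ then $n_0=1$ but $\dim\bar V_0 = m_0 = 0$). The ranks $n_i$ are recovered in the paper only by combining $\GK(L\oplus -L)$ with the parity data via Corollaries~\ref{cortypeli}--\ref{cortypelii}: $\GK(L\oplus -L)$ supplies the counts $\mathcal{C}_i$, the truncated data supply the parity types, and these together yield $\mathcal{A}_i$, $\mathcal{B}_i$, hence $n_i$ and the type of $L_i$. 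So $\GK(L\oplus -L)$ is doing the rank bookkeeping, not just encoding secondary "interaction" information.

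Concretely, the gap is the unproved (and false) claim that $\{\EGK(L\cap 2^i L^\sharp)^{\leq 1}\}_i$ alone determines $\{n_i\}$ and per-block discriminant classes. Without $n_i$ one cannot evaluate the exponent $N$ or the constant $c$ in Theorem~\ref{thmcho52}; and without identifying $m_i$ with $\dim\bar V_i$ (Lemma~\ref{lem:5.4}) one cannot determine $\#\mathrm{O}(\bar V_i,\bar q_i)^{\mathrm{red}}$ nor distinguish free type $I^e_1$ from $I^e_2$ (which in the paper is exactly the parity of $m_i$). To repair the argument you need the two supporting corollaries relating $\GK(L\oplus -L)$ to ranks and types, the identification $m_i=\dim\bar V_i$, and Proposition~\ref{prop:5.5} relating the $\zeta$-datum of $\EGK^{\leq 1}$ to whether $\bar q_i$ is split; these are the technical heart of the proof and cannot be replaced by a block-by-block reading of the Jordan decomposition.
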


When $p$ is odd,  it is easy to see that the local density $\alpha(L, L)$ is completely determined by $\mathrm{GK}(L)$, as explained in Remark \ref{podd}.

This paper is organized as follows. In Section \ref{sec:1}, we will explain notations and definitions of the Gross-Keating invariant and the extended GK datum,  taken from \cite{IK1} for synchronization.
In Section \ref{sec:5.1}, we will recall the local density formula given in the author's previous paper \cite{Cho}.
In Section \ref{sec:5.2}, we will prove  the above Theorem \ref{maintheorem}, by introducing the `truncated' extended GK datum (cf. Section \ref{sss3}) which is much simpler than the extended GK datum.
An appendix to this paper has been written by T.~Ikeda  and H.~Katsurada to compute the local density of binary quadratic forms over any finite extension of $\mathbb{Z}_2$.

\begin{remark}
In the appendix, Ikeda and Katsurada also compute $\GK(L\perp -L)$ and $\EGK(L\cap \vpi^i L^\sharp)^{\leq 1}$ for a  quadratic lattice $L$ of rank $2$ over an arbitrary finite extension of $\mathbb{Z}_2$.
Here we refer  $\vpi$ to Section \ref{ssnota}.
But they show that the local density is not determined by these  invariants (See Example \ref{ex:A1}).
Thus, towards  Problem \ref{question}, we may need more subtle invariants to determine the local density.
\end{remark}

\textbf{Acknowledgments.} The author would like to express  deep appreciation to Professors T. Ikeda and H. Katsurada for many fruitful discussions  and for providing the appendix.
We would also like to  thank the referee for helpful suggestions and comments which substantially helped with the presentation of our paper.

\section{Notation and definition}
\label{sec:1}
\subsection{Notation}\label{ssnota}
\begin{itemize}
\item Let $F$ be a finite  field extension of $\mathbb{Q}_p$, and $\frko=\frko_F$ its ring of integers.
Let $\mathfrak{o}^{\times}$ be the set of units in $\mathfrak{o}$.
The maximal ideal and  the residue field of $\frko$ are denoted by $\frkp$ and $\frkk$, respectively.
We put $q=[\frko:\frkp]$.

\item $F$ is said to be dyadic if $q$ is even.

\item We fix a prime element $\vpi$ of $\frko$ once and for all.

\item The order of $x\in F^\times$ is given by $\mathrm{ord}(x)=n$ for $x\in \vpi^n \frko^\times$.
We understand $\ord(0)=+\infty$.
The order of $x$ is sometimes referred to as the exponential valuation of $x$.
\item Put $F^{\times 2}=\{x^2\,|\, x\in F^\times\}$.
Similarly, we put $\frko^{\times 2}=\{x^2\,|\, x\in\frko^\times\}$.

\item  We consider an $\mathfrak{o}$-lattice $L$ with a quadratic form $Q_L:L \rightarrow \mathfrak{o}$.
Here, an $\mathfrak{o}$-lattice means a finitely generated free $\mathfrak{o}$-module. 
Such a quadratic form $Q_L$ is called \textit{an integral quadratic form} and 
such a pair  $(L, Q_L)$  is called  \textit{a quadratic lattice}.
We sometimes say that $L$ is a quadratic lattice by omitting $Q_L$, if this does not cause confusion or ambiguity. 
Let $\langle -,-\rangle_{Q_L}$ be the symmetric bilinear form on $L$ such that
$$\langle x,y\rangle_{Q_L}=\frac{1}{2}(Q_L(x+y)-Q_L(x)-Q_L(y)).$$
Note that a bilinear form $\langle -,-\rangle_{Q_L}$ is valued in $\frac{1}{•2}\mathfrak{o}$ and thus corresponds to an element of $\mathcal{H}_n(\mathfrak{o})$, which will be defined in Section \ref{sec:2}.
We assume that 
$V=L\otimes_{\mathfrak{o}} F$ is non-degenerate with respect to $\langle -,-\rangle_{Q_L}$.

\item A quadratic lattice $L$ is the \textit{orthogonal sum} of sublattices $L_1$ and $L_2$, written $L=L_1\oplus L_2$, if $L_1\cap L_2=0$, $L_1$ is orthogonal to $L_2$ with respect to the symmetric bilinear form $\langle-,- \rangle_{Q_L}$, and $L_1$ and $L_2$ together span $L$.


\item When $R$ is a ring, the set of $m\times n$ matrices with entries in $R$ is denoted by $\mathrm{M}_{mn}(R)$ or $\mathrm{M}_{m,n}(R)$.
As usual, $\mathrm{M}_n(R)=\mathrm{M}_{n,n}(R)$.
\item The identity matrix of size $n$ is denoted by $\mathbf{1}_n$.
\item For $X_1\in \mathrm{M}_s(R)$ and $X_2\in\mathrm{M}_t(R)$, the matrix $\begin{pmatrix} X_1 & 0 \\ 0 & X_2\end{pmatrix}\in\mathrm{M}_{s+t}(R)$ is denoted by $X_1\perp X_2$.
\item The diagonal matrix whose diagonal entries are $b_1$, $\ldots$, $b_n$ is denoted by $\mathrm{diag}(b_1, \dots, b_n)=(b_1)\perp\dots\perp (b_n)$.

\item 
Let $(a_1, \cdots, a_m)$ and $(b_1, \cdots, b_n)$ be  non-decreasing sequences consisting of non-negative integers.
Then $(a_1, \cdots, a_m)\cup  (b_1, \cdots, b_n)$ is defined as the non-decreasing sequence  $(c_1, \cdots, c_{n+m})$
such that  $\{c_1, \cdots, c_{n+m}\}=\{a_1, \cdots, a_m\}\cup  \{b_1, \cdots, b_n\}$ as multisets.
For example, $(0,1,4)\cup (1,3)=(0,1,1,3,4)$.


\item For $\underline{a}=(a_1, \cdots, a_n)$ 
with each $a_i$ an element of $\mathbb{Z}$,
the sum $a_1+\cdots +a_n$ is denoted by $|\underline{a}|$.

\item For $\underline{a}=(a_1, \cdots, a_n)$ 
with each $a_i$ an element of $\mathbb{Z}$,
the  $m$-tuple $(a_1, \cdots, a_m)$ with $m\leq n$ is denoted  by $\underline{a}^{(m)}$.

\item The set of symmetric matrices $B\in \mathrm{M}_n(F)$ of size $n$ is denoted by $\mathrm{Sym}_n(F)$.
Similarly, define $\mathrm{Sym}_n(\mathfrak{o})$.
\item For $B\in \mathrm{Sym}_n(F)$ and $X\in\mathrm{M}_{n,m}(F)$, we set $B[X]={}^t\! XBX \in \mathrm{Sym}_m(F)$.
\item When $G$ is a subgroup of $\GL_n(F)$, we shall say that two elements $B_1, B_2\in\mathrm{Sym}_n(F)$ are  $G$-equivalent, if there is an element $X\in G$ such that $B_1[X]=B_2$.
\end{itemize}

\subsection{Gross-Keating invariants}
\label{sec:2}
In this subsection, we explain the definition of  the Gross-Keating invariant
 and collect some theorems, taken from  \cite{IK1}.
 
 \begin{itemize}
\item 
We say that $B=(b_{ij})\in \mathrm{Sym}_n(F)$ is a half-integral symmetric matrix if
\begin{align*}
b_{ii}\in\frko_F &\qquad (1\leq i\leq n),  \\
2b_{ij}\in\frko_F& \qquad (1\leq i\leq j\leq n).
\end{align*}
\item The set of all half-integral symmetric matrices of size $n$ is denoted by $\calh_n(\frko)$.
\item An element $B\in\calh_n(\frko)$ is non-degenerate if $\det B\neq 0$.
\item The set of all non-degenerate elements of $\calh_n(\frko)$ is denoted by $\calhnd_n(\frko)$.
\item For $B=(b_{ij})_{1\leq i,j\leq n}\in\calh_n(\frko)$ and $1\leq m\leq n$,  we denote the upper left $m\times m$ submatrix $(b_{ij})_{1\leq i, j\leq m}\in\calh_m(\frko)$ by $B^{(m)}$.

\item For $B=(b_{ij})\in \calhnd_n(\frko)$, we say that the quadratic lattice $(L, Q_L)$ is represented by $B$ if $L$ is of rank $n$ and there is an ordered basis $(e_1, \cdots, e_n)$ of $L$ such that  $b_{ij}=\langle e_i,e_j\rangle_{Q_L}$.

\item When two elements $B, B'\in\calh_n(\frko)$ are $\GL_n(\frko)$-equivalent, we just say they are equivalent and write $B\sim B'$.
For two quadratic lattices $(L, Q_L)$ and $(L', Q_{L'})$ represented by  $B$ and $B'$ respectively, we say that $(L, Q_L)$ and $(L', Q_{L'})$ are equivalent if $B$ and $B'$ are equivalent.
We sometimes say that $Q_L$ and $Q_{L'}$ are equivalent, if this does not cause confusion or ambiguity.

\item The equivalence class of $B$ is denoted by $\{B\}_{equiv}$, i.e., 
$\{B\}_{equiv}=\{B[U]\,|\, U\in\GL_n(\frko)\}$.
 \end{itemize}
 
\begin{definition}(\cite{IK1}, Definitions 0.1 and 0.2)
\label{def:2.1}
\begin{enumerate}
\item Let $B=(b_{ij})\in\calhnd_n(\frko)$.
Let $S(B)$ be the set of all non-decreasing sequences $(a_1, \ldots, a_n)\in\ZZn$ such that
\begin{align*}
&\ord(b_{ii})\geq a_i \qquad\qquad\qquad\quad (1\leq i\leq n), \\
&\ord(2 b_{ij})\geq (a_i+a_j)/2  \qquad\; (1\leq i\leq j\leq n).
\end{align*}
Put
\[
\bfS(\{B\}_{equiv})=\bigcup_{B'\in\{B\}_{equiv}} S(B')=\bigcup_{U\in\GL_n(\frko)} S(B[U]).
\]
The Gross-Keating invariant $\GK(B)$ of $B$ is the greatest element of $\bfS(\{B\}_{equiv})$ with respect to the lexicographic order $\succeq$ on $\ZZn$.

\item A symmetric matrix $B (\in\calhnd_n(\frko))$ is called $optimal$ if $\mathrm{GK}(B)\in S(B)$.

\item If $B$ is a symmetric matrix of a quadratic lattice $(L, Q_L)$, then  $\mathrm{GK}(L)$, called the Gross-Keating invariant of  $(L, Q_L)$, is defined by $\mathrm{GK}(B)$.
$\mathrm{GK}(L)$ is independent of the choice of the matrix $B$.
\end{enumerate}
\end{definition}

It is known that the set $\mathbf{S}(\{B\}_{equiv})$ is  finite (cf. \cite{IK1}), which explains the well-definedness of $\mathrm{GK}(B)$.
We can also see that $\mathrm{GK}(B)$ depends only on the equivalence class of  $B$.
A sequence of length $0$ is denoted by $\emptyset$.
When $B$ is the empty matrix, we understand $\GK(B)=\emptyset$.
By definition, a non-degenerate half-integral symmetric matrix $B\in\calhnd_n(\frko)$ is equivalent to an optimal form.\\

For $B\in\calhnd_n(\frko)$, we put $D_B=(-4)^{[n/2]}\det B$.
If $n$ is even, we denote the discriminant ideal of $F(\sqrt{D_B})/F$ by $\mathfrak{D} _B$.
We  put
\[
\xi_B=
\begin{cases} 
1 & \text{ if $D_B\in F^{\times 2}$,} \\
-1 & \text{ if $F(\sqrt{D_B})/F$ is unramified and $[F(\sqrt{D_B}):F]=2$,} \\
0 & \text{ if $F(\sqrt{D_B})/F$ is ramified.} 
\end{cases}
\]
\begin{definition}(\cite{IK1}, Definition 0.3)
\label{def:0.3}
For $B\in\calhnd_n(\frko)$, we put
\[
\Del(B)=
\begin{cases}
\ord(D_B) & \text{ if $n$ is odd,} \\
\ord(D_B)-\ord(\mathfrak{D}_B)+1-\xi_B^2 & \text{ if $n$ is even.}
\end{cases}
\]
\end{definition}
Note that if $n$ is even, then
\[
\Del(B)=
\begin{cases}
\ord(D_B) & \text{ if $\ord(\mathfrak{D}_B)=0$,} \\
\ord(D_B)-\ord(\mathfrak{D}_B)+1 & \text{ if $\ord(\mathfrak{D}_B)>0$.}
\end{cases}
\]

One of the main results of \cite{IK1} is the following theorem:
\begin{theorem}[\cite{IK1}, Theorem 0.1] 
\label{thm:2.1}
For $B\in\calhnd_n(\frko)$,  we have
\[
|\GK(B)|=\Del(B).
\]
\end{theorem}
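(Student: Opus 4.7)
The strategy is to reduce to an optimal Jordan decomposition and then compute both $|\GK(B)|$ and $\Del(B)$ block by block. Both $|\GK(B)|$ and $\Del(B)$ are invariants of the $\GL_n(\frko)$-equivalence class, so I may assume $B$ is optimal, with $\GK(B)=\ua=(a_1,\ldots,a_n)\in S(B)$. Let $a_{(1)}<\cdots<a_{(r)}$ be the distinct values appearing in $\ua$, with multiplicities $m_1,\ldots,m_r$. The first main step is a Jordan-type splitting: I claim $B$ is $\GL_n(\frko)$-equivalent to an orthogonal sum $B_1\perp\cdots\perp B_r$ in which $\GK(B_k)=(a_{(k)},\ldots,a_{(k)})$ of length $m_k$. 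The key point is that for $i$ in the $k$-th group and $j$ in a later group, the constraint $\ord(2b_{ij})\geq(a_i+a_j)/2$ together with optimality forces the cross-terms to have strictly higher valuation than the smaller scale, so a Hensel-type row/column reduction eliminates them without disturbing optimality within blocks.

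Given the Jordan decomposition, $|\GK(B)|=\sum_k m_ka_{(k)}$ is immediate. On the other side, $\det B=\prod_k\det B_k$, and each $B_k$ is $\vpi^{a_{(k)}}$ times a unimodular half-integral form $M_k$, namely an orthogonal sum of rank-one blocks $(u)$ with $u\in\frko^\times$, together with (in the dyadic case) rank-two blocks of hyperbolic type $\bigl(\begin{smallmatrix} 0 & 1/2 \\ 1/2 & 0\end{smallmatrix}\bigr)$ or anisotropic type $\bigl(\begin{smallmatrix} \alpha & 1/2 \\ 1/2 & \beta\end{smallmatrix}\bigr)$ with $1-4\alpha\beta\in\frko^\times$. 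For each $M_k$ one checks directly that $\ord(D_{M_k})=0$ and, when $\mathrm{rank}(M_k)$ is even, the correction term $-\ord(\mathfrak{D}_{M_k})+1-\xi_{M_k}^2$ also vanishes (hyperbolic gives $\xi=1$, anisotropic unramified gives $\xi=-1$, anisotropic ramified is excluded from unimodular forms in this setting), so $\Del(M_k)=0$. Rescaling by $\vpi^{a_{(k)}}$ gives $\Del(B_k)=m_ka_{(k)}$.

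The last step is additivity of $\Del$ along the Jordan decomposition at distinct scales, $\Del(B_1\perp\cdots\perp B_r)=\sum_k\Del(B_k)$. The subtlety is that the factor $(-4)^{[n/2]}$ in $D_B$ is not the product of the $(-4)^{[m_k/2]}$ when more than one block has odd rank, and $\mathfrak{D}_B$ depends on the square class of $D_B/\vpi^{\ord(D_B)}$, which combines non-trivially. I would verify additivity by induction on $r$ using the classical conductor formula for $F(\sqrt{d})/F$ in terms of $\ord(d)$ and the square class of the unit part. The main obstacle, I expect, is this last additivity step in the dyadic case when combining multiple odd-rank Jordan constituents: the extra powers of $2$ arising from the mismatch of $[n/2]$ and $\sum[m_k/2]$, together with the possibly ramified nature of $F(\sqrt{D_B})/F$, require careful bookkeeping of $\xi$ and $\mathfrak{D}$. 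A secondary but genuinely technical difficulty is the Jordan decomposition itself in the dyadic case, where optimality is essential to separate adjacent scales with arbitrarily small gaps.
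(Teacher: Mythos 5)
Your proposed Jordan-type splitting into blocks of constant GK does not exist in the dyadic case, and this is a fatal gap in the argument. Take $F=\QQ_2$ and $B=\begin{pmatrix}1 & 0\\0 & -1\end{pmatrix}$. Here $D_B=(-4)\cdot(-1)=4$ is a square, so $\xi_B=1$, $\mathfrak{D}_B=\frko$, and $\Del(B)=\ord(D_B)-\ord(\mathfrak{D}_B)+1-\xi_B^2=2-0+1-1=2$. On the other side, $\GK(B)=(0,2)$: as observed in the proof of Proposition~\ref{propgkl-l}, $\begin{pmatrix}\eps&0\\0&-\eps\end{pmatrix}$ is equivalent to $\begin{pmatrix}1&1\\1&4c\end{pmatrix}$ for some $c\in\frko$, a reduced form of GK type $((0,2),(12))$. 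Your claim would force $B\sim B_1\perp B_2$ with $\GK(B_1)=(0)$, $\GK(B_2)=(2)$, hence $B_1=(u_1)$ with $u_1\in\frko^\times$ and $B_2=(u_2)$ with $\ord(u_2)=2$; but then $\ord(\det(B_1\perp B_2))=2$ while $\ord(\det B)=0$, a contradiction. So no such decomposition exists.

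The failure traces to the heuristic that ``optimality forces the cross-terms to have strictly higher valuation.'' This is false. The reduced-form structure of Definitions~\ref{def3.1}--\ref{def3.2} \emph{requires} an entry $b_{i\sigma(i)}$ with $\ord(2b_{i\sigma(i)})=(a_i+a_{\sigma(i)})/2$ exactly, and the admissible involution $\sigma$ routinely pairs indices lying in different GK groups of the same parity. In the example above the unique admissible involution for $\ua=(0,2)$ is $(12)$ (the identity is excluded by condition~(i) of Definition~\ref{def3.1}, since $0\equiv 2\bmod 2$), with $\calp^-=\{1\}$ and $\calp^+=\{2\}$, and the unit in position $(1,2)$ of the reduced form $\begin{pmatrix}1&1\\1&4c\end{pmatrix}$ cannot be cleared without destroying optimality. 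Once the block decomposition fails, the remainder of your plan (computing $\Del$ blockwise and then proving additivity) has no base to stand on. Note also that the paper does not prove Theorem~\ref{thm:2.1}; it is cited from \cite{IK1}, where the argument necessarily works with the full reduced-form and admissible-involution machinery precisely because of the cross-block links your argument assumes away.
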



\begin{definition}(\cite{IK1}, Definition 0.4)
\label{def:2.4}
The Clifford invariant of $B\in\calhnd_n(\frko)$ is the Hasse invariant of the Clifford algebra (resp.~the even Clifford algebra) of $B$ if $n$ is even (resp.~odd).
\end{definition}
We denote the Clifford invariant of $B$ by $\eta_B$.
If $B$ is $\GL_n(F)$-equivalent to $\mathrm{diag}(b'_1, \ldots, b'_n)$, then 
\begin{align*}
\eta_B
=&
\langle -1, -1 \rangle^{[(n+1)/4]}\langle -1, \det B \rangle^{[(n-1)/2]} \prod_{i < j} \langle b'_i, b'_j \rangle \\
=&\begin{cases}
\langle -1, -1 \rangle^{m(m-1)/2}\langle -1, \det B \rangle^{m-1} \ds\prod_{i < j} \langle b'_i, b'_j \rangle & \text{ if $n=2m$, } \\
\noalign{\vskip 6pt}
\langle -1, -1 \rangle^{m(m+1)/2}\langle -1, \det B \rangle^{m} \ds\prod_{i < j} \langle b'_i, b'_j \rangle & \text{ if $n=2m+1$. } 
\end{cases}
\end{align*}
Here, $\langle -,-  \rangle$ is the quadratic Hilbert symbol.
If $H\in\calhnd_2(\frko)$ is $\GL_2(F)$-isomorphic to a hyperbolic plane, then $\eta_{B\perp H}=\eta_B$.
In particular, if $n$ is odd, then we have
\begin{align*}
\eta_B
=&
\begin{cases} 
1 & \text{ if  $B$ is split over $F$, that is,  the associated Witt index  is $\frac{n-1}{2}$,} \\
-1 & \text{ otherwise.} 
\end{cases}
\end{align*}

The following theorem is necessary to define the extended GK datum, which will be explained in the next subsection.
\begin{theorem}[\cite{IK1}, Theorem 0.4]  
\label{thm:2.4}
Let $B, B_1\in\calhnd_n(\frko)$.
Suppose that $B\sim B_1$ and both $B$ and $B_1$ are optimal.
Let $\ua=(a_1, a_2, \ldots, a_n)=\GK(B)=\GK(B_1)$.
Suppose that $a_k<a_{k+1}$ for $1\leq k < n$.
Then the following assertions (1) and (2) hold.
\begin{itemize}
\item[(1)] If $k$ is even, then $\xi_{B^{(k)}}=\xi_{B_1^{(k)}}$.
\item[(2)] If $k$ is odd, then $\eta_{B^{(k)}}=\eta_{B_1^{(k)}}$.
\end{itemize}
\end{theorem}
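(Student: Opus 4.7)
The plan is to deduce Theorem \ref{thm:2.4} from a \emph{Splitting Lemma}: if $B \in \calhnd_n(\frko)$ is optimal with $\GK(B) = \ua$ and $a_k < a_{k+1}$, then $B$ is $\GL_n(\frko)$-equivalent to the block-diagonal form $B^{(k)} \perp B^{*}$ for some $B^{*} \in \calhnd_{n-k}(\frko)$, and moreover both $B^{(k)}$ and $B^{*}$ are themselves optimal, with Gross--Keating invariants $(a_{1},\ldots,a_{k})$ and $(a_{k+1},\ldots,a_{n})$, respectively.

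To prove the Splitting Lemma, I would write $B$ in block form with $P := B^{(k)}$ in the upper-left, $C$ in the upper-right, and $D$ in the lower-right. The Gaussian-elimination change of basis $U = \bigl(\begin{smallmatrix} \mathbf{1}_{k} & -P^{-1}C \\ 0 & \mathbf{1}_{n-k} \end{smallmatrix}\bigr)$ transforms $B$ into $P \perp (D - {}^t\!C\, P^{-1} C)$. The crucial point is to verify that $P^{-1}C$ has entries in $\frko$, so that $U \in \GL_n(\frko)$; this follows from the optimality inequalities $\ord(b_{ii}) \geq a_i$ and $\ord(2 b_{ij}) \geq (a_i + a_j)/2$ combined with the strict gap $a_k < a_{k+1}$, via a Cramer's rule computation on the entries of $P^{-1}$. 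Once the splitting is established, optimality of $B^{(k)}$ and $B^{*}$ with the stated Gross--Keating invariants follows from the maximality of $\ua$: any lexicographically larger sequence in $\bfS(\{B^{(k)}\}_{equiv})$ or $\bfS(\{B^{*}\}_{equiv})$ would, after concatenation, produce a sequence in $\bfS(\{B\}_{equiv})$ strictly greater than $\ua$, contradicting $\GK(B) = \ua$.

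With the Splitting Lemma in hand, I would argue the theorem by induction on $n$. Given two optimal representatives $B, B_1$ of size $n$ with common $\GK = \ua$ and jump at $k$, the lemma gives $B \sim_{\frko} B^{(k)} \perp B^{*}$ and $B_1 \sim_{\frko} B_1^{(k)} \perp B_1^{*}$, whence $B^{(k)} \perp B^{*} \sim_F B_1^{(k)} \perp B_1^{*}$ over $F$. Iterating the Splitting Lemma on the smaller-size tail forms $B^{*}$ and $B_1^{*}$ and invoking the inductive hypothesis supplies the $F$-invariants (discriminant class in $F^{\times}/F^{\times 2}$ and Clifford/Hasse invariant) needed to conclude $B^{*} \sim_F B_1^{*}$; Witt cancellation over the local field $F$ then yields $B^{(k)} \sim_F B_1^{(k)}$, giving $\xi_{B^{(k)}} = \xi_{B_1^{(k)}}$ for even $k$ and $\eta_{B^{(k)}} = \eta_{B_1^{(k)}}$ for odd $k$.

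The principal obstacle is the dyadic case $p = 2$: the factor of $2$ in $\ord(2 b_{ij})$ makes the valuation bookkeeping delicate, and a naive Gaussian elimination may fail to produce an integral change of basis. A careful dyadic analysis---possibly requiring a preliminary $\GL_n(\frko)$-change of basis to put $B$ into a tractable normal form before performing the elimination---will be needed. A secondary subtlety is the matching of the tail $F$-invariants in the inductive step, since over dyadic $F$ the Clifford invariant must be tracked through each orthogonal-sum decomposition using its multiplicativity modulo correction terms involving the discriminants.
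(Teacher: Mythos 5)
This paper does not itself prove Theorem~\ref{thm:2.4}; it is quoted from \cite{IK1} (Theorem~0.4 there), so there is no in-paper argument to compare against. Evaluating your proposal on its own terms: the Splitting Lemma is false, and this is fatal.

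Take $F=\QQ_2$ and $B=\begin{pmatrix}1&1\\1&4\end{pmatrix}$. Then $(0,2)\in S(B)$, and one computes $D_B=-12$, so $F(\sqrt{D_B})=\QQ_2(\sqrt{-3})=\QQ_2(\sqrt{5})$ is unramified; hence $\xi_B=-1$, $\ord(\frkD_B)=0$ and $\Del(B)=2-0+1-1=2$. Together with $a_1=\ord(\bfn(L))=0$ (Remark~\ref{rmk1}.(3)) this forces $\GK(B)=(0,2)$, so $B$ is optimal with a jump at $k=1$. Now do your Gaussian elimination: $U=\left(\begin{smallmatrix}1&-1\\0&1\end{smallmatrix}\right)\in\GL_2(\frko)$ gives $B[U]=\diag(1,3)$. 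The change of basis \emph{is} integral, so the subtlety you flag is not where the problem lies; the trouble is the Schur complement $(3)$ has valuation $0$, not $2$. Indeed no $B^*$ with $B\sim B^{(1)}\perp B^*$ and $\GK(B^*)=(2)$ can exist, since that would force $\ord(\det B)\geq 2$ while $\ord(\det B)=\ord(3)=0$. The block form $\diag(1,3)$ is not optimal, and the tail's GK invariant collapses to $(0)$. So an optimal (even reduced --- one can check $B$ above is a reduced form of GK type $((0,2),(12))$) dyadic matrix genuinely need not split block-diagonally across a GK jump; this is a structural obstruction, not a bookkeeping issue, and it is precisely why \cite{IK1} organizes its proof around reduced forms and their admissible involutions rather than direct elimination.

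A secondary weakness: the inductive step as written is circular. To conclude $B^*\sim_F B_1^*$ you must match the discriminant square classes of the tails, which in turn requires $\det B^{(k)}\equiv\det B_1^{(k)}\pmod{F^{\times 2}}$; but $\xi_{B^{(k)}}=\xi_{B_1^{(k)}}$ is strictly weaker (it only records whether $F(\sqrt{D_{B^{(k)}}})/F$ is trivial, unramified, or ramified, not the square class). You would need to strengthen the inductive hypothesis to full $F$-isometry of the head and tail blocks. But even with that repair, the Splitting Lemma failure blocks the argument before the induction starts.
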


\subsection{The extended GK datum} 
\label{sec:3}

Ikeda and Katsurada augment
the notion of the Gross-Keating invariant with additional data, resulting in an invariant that they call the extended
GK datum, whose definition we now recall in detail from  \cite{IK1}.

\begin{definition}[\cite{IK2}, Definition 3.1]\label{def3.0}
Let $\underline{a}=(a_1, \cdots, a_n)$ be a non-decreasing sequence of non-negative integers.
Write $\underline{a}$ as
\[\underline{a}=(\underbrace{m_1, \cdots, m_1}_{n_1}, \cdots, \underbrace{m_r, \cdots, m_r}_{n_r} )\]
with $m_1<\cdots <m_r$ and $n=n_1+\cdots + n_r$.
For $s=1, 2, \cdots, r$, put
\[n_s^{\ast}=\sum_{u=1}^{s}n_u,\]
and
\[ I_s=\{ n_{s-1}^{\ast}+1, n_{s-1}^{\ast}+2, \cdots, n_{s}^{\ast}\}. \]
Here, we let $n_0^{\ast}=0$.
\end{definition}

\begin{definition} 
\label{def:3.3}(\cite{IK1}, Definition 6.3)
We define the extended GK datum as follows.
\begin{enumerate}
\item
Let  $B \in \calhnd_n(\frko )$ be an optimal form such that $\GK(B)=\ua=(a_1,\ldots,a_n)$.
We define $\zeta_s=\zeta_s(B)$ by 
\[
\zeta_s=\zeta_s(B)=
\begin{cases}
\xi_{B^{(n_s^\ast)}} & \text{ if $n_s^\ast$ is even,} \\
\eta_{B^{(n_s^\ast)}} & \text{ if $n_s^\ast$ is odd.} 
\end{cases}
\]
Then the extended GK datum  of $B$, denoted by $\EGK(B)$, is defined as follows:
$$\EGK(B)=(n_1,\ldots,n_r;m_1,\ldots,m_r;\zeta_1,\ldots,\zeta_r).$$
Here, the integers $n_i$'s and $m_j$'s are obtained from $\mathrm{GK}(B)=(a_1, \cdots, a_n)$ as in Definition \ref{def3.0}.

\item For $B \in \calhnd_n(\frko )$, we define $\EGK(B)=\EGK(B')$, where $B'$ is an optimal form equivalent to $B$.
This definition does not depend on the choice of an optimal form $B'$ by Theorem \ref{thm:2.4}.

\item If $B$ is a symmetric matrix with respect to an ordered basis of  a quadratic lattice $(L, Q_L)$, then  $\mathrm{EGK}(L)$, called the extended GK datum of  $(L, Q_L)$, is defined by $\mathrm{EGK}(B)$.
\end{enumerate}
\end{definition}

Clearly, $\EGK(B)$  (or $\EGK(L)$) depends only on the isomorphism class of $B$ by Theorem \ref{thm:2.4}.

\subsection{Definition of local density}
\label{sec:4}
In this subsection, we explain a definition of the local density in the general case, taken from Section 5 of \cite{IK2}.

For $m\geq n\geq 1$, $A\in\calh_m(\frko)$, $B\in\calh_n(\frko)$, we put
\[
\cala_N(B, A)=
\{X=(x_{ij})\in\mathrm{M}_{mn}(\frko)/\frkp^N \mathrm{M}_{mn}(\frko)\;|\; A[X]-B\in\frkp^N\calh_n(\frko)\}.
\]
Then the local density $\alp(B, A)$ is defined by
\[
\alp(B,A)=\lim_{N\rightarrow \infty} (q^N)^{-mn+\frac{n(n+1)}2}\, \sharp\cala_N(B, A).
\]
Here, if  $N>2\ord(D_B)$, then the value
\[
(q^N)^{-mn+\frac{n(n+1)}2}\, \sharp\cala_N(B, A)
\]
does not depend on $N$.

Equivalently, 
 we have
\[
\alp(B, A)=\int_{y\in\mathrm{Sym}_n(F)} \int_{x\in\mathrm{M}_{mn}(\frko)} \psi\left(\mathrm{tr}( y(A[x]-B))\right)\, dx\, dy
\]
for  an additive character $\psi$ of $F$ with order $0$.
Here,  an additive character $\psi$ of $F$ with order $0$ means that (cf. Section 2 of \cite{IK2})
\[
\mathfrak{o}=\{a\in F| \psi(ax)=1 \textit{ for any }x\in \mathfrak{o}  \}.
\]
Here the integral $\int_{y\in \mathrm{Sym}_n(F)}$ with respect to $y\in \mathrm{Sym}_n(F)$ should be interpreted 
as the principal value integral
\[
\lim_{N\rightarrow \infty} \int_{y\in \vpi^{-N}\call}
\]
for some fixed lattice $\call\subset \mathrm{Sym}_n(F)$.

For the quadratic lattice $(L, Q_L)$ represented by  $B\in\calhnd_n(\frko)$ and the quadratic space $(V, Q_V)$ such that  $V=V\otimes_{\mathfrak{o}}F$ and $Q_V=Q_L\otimes 1$, 
the orthogonal group $G=\mathrm{O}_{F}(V)$ is an algebraic group defined over $F$.
The local density  for the single quadratic lattice $(L, Q_L)$, denoted by $\beta(L)$ or $\beta(B)$,  is defined by
\[
\beta(L)=\frac{1}{[G:G^\circ]}\alp(B, B).
\]
Here, $G^\circ$ is the identity component of $G$.


\begin{remark}
\begin{enumerate}
\item 
The local density can also be described in terms of a volume of certain $p$-adic manifold using scheme theory. 
In this remark, we will explain a main idea of this method following Section 3.1 of \cite{CY}.
For more detailed explanation, we refer to Section 3 of \cite{GY} or Section 3.1 of \cite{CY}. 

For the quadratic lattice $(L, Q_L)$   represented by   $B\in\calhnd_n(\frko)$ and the quadratic space $(V, Q_V)$ such that  $V=V\otimes_{\mathfrak{o}}F$ and $Q_V=Q_L\otimes 1$, 
regarding $\mathrm{M}_n(F)$ and $\mathrm{Sym}_n(F)$ as varieties over $F$, let $\omega_{M_n}$ and $\omega_{\mathcal{H}_n}$ be nonzero, translation-invariant top-degree forms on $\mathrm{M}_n(F)$ and $\mathrm{Sym}_n(F)$, respectively, with normalizations 
$$\int_{\mathrm{M}_n(\mathfrak{o})}|\omega_{M_n}|=1 \mathrm{~and~}  \int_{\mathcal{H}_n(\mathfrak{o})}|\omega_{\mathcal{H}_n}|=1.$$

We define a map $\rho : \mathrm{GL}_n(F) \rightarrow \mathrm{Sym}_n(F)$ by $\rho(X)= Q_V\circ X$.
Here we identify $\mathrm{GL}_n(F)=\mathrm{Aut}_F(V)$ 
and $\mathrm{Sym}_n(F)$ to be the set of (possibly degenerate) quadratic forms on $V$.
Then the inverse image of $Q_V$, along the map $\rho$, is $\mathrm{O}_F(V)$, which represents the group of $F$-linear self-maps of $V$ preserving the  quadratic space $(V, Q_V)$. One can also show that the morphism $\rho$ is representable as a morphism of schemes over $F$ and that this (necessarily unique) morphism is smooth. Then we have 
 a differential form $\omega_{L}$ on $\mathrm{M}_n(F)$ such that $\omega_{M_n}|_{\mathrm{GL}_n(F)}=
\rho^{\ast}\omega_{\mathcal{H}_n}\wedge \omega_{L}$. 
We denote by $\omega_{L}^{\mathrm{ld}}$  the restriction of $\omega_{L}$ to $\mathrm{O}_F(V)$. 
We sometimes write $\omega_{L}^{\mathrm{ld}}=\omega_{M_n}/\rho^{\ast}\omega_{\mathcal{H}_n}$.
Then we have
\[
\beta(L)=\frac{1}{[G:G^\circ]}
\int_{\mathrm{O}_{\mathfrak{o}}(L)}|\omega_L^{\mathrm{ld}}|.
\]
Here, $\mathrm{O}_{\mathfrak{o}}(L)$ represents the group, as a subset of $M_n(\mathfrak{o})$, preserving the quadratic lattice $(L, Q_L)$.

This equation is explained in Lemma 3.4 of \cite{GY}. 
For arbitrary $B  \in\calhnd_n(\frko)$  and $A  \in\calhnd_m(\frko)$,
 there is also a similar formulation of $\alp(B,A)$ as the integral of a volume form on a certain $p$-adic manifold. See Lemma 3.2 of \cite{CY} for more details.

\item The local density given in \cite{Cho} , which we denote in this paper by $\beta^C(L)$ (in \cite{Cho} this term is denoted by $\beta(L)$), uses a different normalization.  
With the above setting,
 let $\omega_{Sym_n}$ be a nonzero, translation-invariant  top-degree  form on $\mathrm{Sym}_n(F)$ with normalization
$$ \int_{\mathrm{Sym}_n(\mathfrak{o})}|\omega_{Sym_n}|=1.$$
Then $\beta^C(L)$ is defined to be 
\[
\beta^C(L)=\frac{1}{[G:G^\circ]}\int_{\mathrm{O}_{\mathfrak{o}}(L)}|\omega_{L}^{C, \mathrm{ld}}|,
\]
where 
 $\omega_{L}^{C, \mathrm{ld}}:=\omega_{M_n}/\rho^{\ast}\omega_{Sym_n}$.

\item In order to compare the both local densities $\beta(L)$ and $\beta^C(L)$, it suffices to compare the two volume forms $\omega_{\mathcal{H}_n}$ and $\omega_{Sym_n}$. 
Our normalizations imply that
\[
\vpi^{n(n-1)/2}\cdot \omega_{Sym_n}=  \omega_{\mathcal{H}_n}.
\]
Thus $\omega_{L}^{C, \mathrm{ld}}=\vpi^{n(n-1)/2}\cdot \omega_{L}^{\mathrm{ld}}$.
This  yields: 
\[
\beta^\mathrm{C}(L)=q^{-e n(n-1)/2} \beta(L).
\]
Here, $e$ is the ramification index of $F$ over $\QQ_2$.
\end{enumerate}

\end{remark}


\section{The local density formula of a single quadratic lattice}\label{sec:5.1}
In this section, we recall the local density formula for $\beta^C(L)$ 
  given in \cite{Cho}.
We assume that $F$ is an unramified finite field extension of $\QQ_2$.
We follow the formulation of \cite{Cho}.
Let
$(L, Q_L)$ be the quadratic lattice  represented by 
$B\in \calhnd_n(\frko)$.
We first collect necessary terminology below.
\begin{enumerate}
\item Recall that the bilinear form $\langle x, y \rangle_{Q_L}$ is defined by
\[
\langle x, y\rangle_{Q_L}=\frac12 (Q_L(x+y)-Q_L(x)-Q_L(y)).
\]
\item The scale  $\bfs(L)$ and the norm $\bfn(L)$
 are defined by
\begin{align*}
\bfs(L)=&\{\langle x, y\rangle_{Q_L}\;|\; x, y\in L\}
, \\
\bfn(L)=&\textit{the fractional ideal generated by $\{Q_L(x)\;|\; x\in L\}$}.
\end{align*}
\item The dual lattice $L^\sharp$ is defined by
\[
L^\sharp=\{x\in L\otimes F\;|\; \langle x, L\rangle_{Q_L}\subset \frko\}.
\]
\item $L$ is called a unimodular lattice if $L=L^\sharp$.
\item A unimodular lattice $L$ is \textit{of parity type I} if $\bfn(L)=\frko$ otherwise \textit{of parity type II}.

\item $(L, Q_L)$ is $i$-\textit{modular} if $(L, a^{-1}Q_L)$ is unimodular for some $a\in \mathfrak{o} \backslash \{0\}$ such that the exponential valuation of $a$ is $i$ 
(such an  $a$ being unique up to a unit).
In this case the parity type of $(L, Q_L)$ is defined to be the parity type of $(L, a^{-1}Q_L)$.
The zero lattice is considered to be \textit{of parity type II}.

\item Let $B(L)$ be the sublattice of $L$ such that $B(L)/2L$ is the kernel of the linear form $2^{-s(L)}Q_L$ mod 2 on $L/2L$.
Here, $s(L)$ is the integer such that $\bfs(L)=(2^{s(L)})$.
\item Let $Z(L)$ be the sublattice of $L$ such that $Z(L)/2L$ is the kernel of the quadratic form $2^{-s(L)-1}Q_L$ mod 2 on $B(L)/2L$.

\end{enumerate}

Let 
\[
L=\bigoplus_i L_i
\]
be a Jordan splitting.
We assume $\bfs(L_i)=(2^i)$,  allowing $L_i$ to be the zero lattice.
Put $n_i=\mathrm{rank}_\frko L_i$ and 
\[
\left\{
  \begin{array}{l}
 \textit{$A_i=\{x\in L \mid \langle x,L\rangle_{Q_L} \in 2^i\frko\}=L\cap 2^i  L^\sharp$};\\
 \textit{$B_i=B(A_i)$};\\
 \textit{$Z_i=Z(A_i)$}. 
    \end{array} \right.
\]
Then $Z_i$ is the sublattice of $B_i$ such that $Z_i/2 A_i$ is the kernel of the quadratic form $\frac{1}{2^{i+1}}Q_L$ mod 2 on $B_i/2 A_i$.
Let $\bar{V_i}=B_i/Z_i$ and $\bar{q}_i$ denote the nonsingular quadratic form $\frac{1}{2^{i+1}}Q_L$ mod 2 on $\bar{V_i}$.\\

 We assign a type to each $L_i$ as follows:
\[\left \{
  \begin{array}{l l}
   I & \quad  \textit{if $L_i$ is of parity type I,}\\
   I^o & \quad \textit{if $L_i$ is of parity type I and the rank of $L_i$ is odd,}\\
  I^e & \quad \textit{if $L_i$ is of parity type I and the rank of $L_i$ is even,}\\
II & \quad  \textit{if $L_i$ is of parity type II}.\\
    \end{array} \right.\]
    In addition, we say that $L_i$ is
\[\left \{
  \begin{array}{l l}
  \textit{bound} & \quad  \textit{if at least one of  $L_{i-1}$ or $L_{i+1}$ is of parity type I,}\\
 \textit{free} & \quad \textit{if both  $L_{i-1}$ and $L_{i+1}$ are of parity type II}.\\
    \end{array} \right.\]
Assume that a lattice $L_i$ is \textit{free} \textit{of type} $\textit{I}^e$.
We denote by $\bar{V_i}$ the $\frkk$-vector space $B_i/Z_i$.
Then we say that $L_i$ is
\[
 \left\{
  \begin{array}{l l}
  \textit{of type I}^e_1   & \quad  \text{if the dimension of  $\bar{V_i}$ is odd, 
}\\
  \textit{of type I}^e_2   &   \quad  \text{otherwise}.\\
    \end{array} \right.
\]
Notice that for each $i$, the type of $L_i$ is independent of the choice of a Jordan splitting.\\

Let $\underline{G}$ be the smooth integral model of $G=\mathrm{O}_{Q_F}$.
The readers are referred to the beginning of Section 3 of \cite{Cho} for a detailed  definition of $\underline{G}$.
The special fibre of $\underline{G}$ is denoted by $\tilde G$.
Then there exists a surjective morphism $\varphi$ (cf. Theorem 4.1 in \cite{Cho})
\[
\varphi=\prod_i \varphi_i : \tilde{G} ~ \longrightarrow  ~\prod_i \mathrm{O}(\bar{V_i}, \bar{q_i})^{\mathrm{red}}.
\]
The image  Im $\varphi_i$ is described as follows (cf. Remark 4.3  in \cite{Cho}).
     \[
      \begin{array}{c|c}
      \hline
      \mathrm{Type~of~lattice~}  L_i & \mathrm{Im~}  \varphi_i \\
      \hline
      \textit{I}^o,\ \  \mathrm{\textit{free}} & \mathrm{O}(n_i-1, \bar{q_i})\\
      \textit{I}^e_1,\ \  \mathrm{\textit{free}} &\mathrm{SO}(n_i-1, \bar{q_i})\\
      \textit{I}^e_2,\ \  \mathrm{\textit{free}} &\mathrm{O}(n_i-2, \bar{q_i})\\
      \textit{II},\ \  \mathrm{\textit{free}} &\mathrm{O}(n_i, \bar{q_i})\\
      \textit{I}^o,\ \  \mathrm{\textit{bound}} &\mathrm{SO}(n_i, \bar{q_i})\\
      \textit{I}^e,\ \  \mathrm{\textit{bound}} &\mathrm{SO}(n_i-1, \bar{q_i})\\
      \textit{II},\ \  \mathrm{\textit{bound}} &\mathrm{SO}(n_i+1, \bar{q_i}) \\
      \hline
      \end{array}
    \]

Let
 \begin{itemize}
 \item $\alpha$ be the number of all $i$ such that $L_i$ is \textit{free} \textit{of type} $\textit{I}^e_1$.
 \item  $\beta$ be the number of all $j$ such that $L_j$ is \textit{of type I} and $L_{j+2}$ is \textit{of type II}.
\end{itemize}

\begin{theorem}[\cite{Cho}, Theorem 4.12]\label{thmcho412}
We have an isomorphism
\[
\tilde{G}/R_u\tilde{G}\simeq 
\prod_i \mathrm{O}(\bar{V_i}, \bar{q_i})^{\mathrm{red}} \times (\mathbb{Z}/2\mathbb{Z})^{\alpha+\beta}.
\]
Here, $R_u\tilde{G}$  is the connected unipotent radical of $\tilde{G}$.
\end{theorem}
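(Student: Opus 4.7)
The plan is to start from the surjective homomorphism $\varphi = \prod_i \varphi_i : \tilde{G} \to \prod_i \mathrm{O}(\bar{V}_i, \bar{q}_i)^{\mathrm{red}}$ supplied by Theorem 4.1 of \cite{Cho} and to analyze its kernel. Since the codomain is reductive, $R_u\tilde{G}$ is automatically contained in $\ker\varphi$. The task therefore splits into (i) verifying that $(\ker\varphi)^\circ = R_u\tilde{G}$, and (ii) identifying the finite component group $\ker\varphi / R_u\tilde{G}$ with $(\mathbb{Z}/2\mathbb{Z})^{\alpha+\beta}$, followed by showing that the resulting extension of $\prod_i \mathrm{O}(\bar{V}_i, \bar{q}_i)^{\mathrm{red}}$ by this finite abelian group splits.

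For (i), I would work in explicit block coordinates for $\tilde{G}$ coming from the Jordan decomposition $L = \bigoplus_i L_i$ and the chain $Z_i \subset B_i \subset A_i$ that enters the definition of $\bar{V}_i$. An element in $\ker\varphi$ acts trivially modulo $Z_i$ on each $B_i$; its off-diagonal blocks form an affine space whose underlying group is unipotent, and its diagonal blocks are constrained to either the identity or to isolated involutions. The connected part of the kernel is therefore generated by the off-diagonal blocks together with certain nilpotent diagonal perturbations, and one checks by a dimension count (matching the dimension computations of Section 3 of \cite{Cho}) that this connected subgroup coincides with $R_u\tilde{G}$.

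For (ii), I would isolate the two sources of nontrivial order-two elements in $\ker\varphi / R_u\tilde{G}$. First, each free lattice $L_i$ of type $I^e_1$ has $\mathrm{Im}\,\varphi_i = \mathrm{SO}(n_i-1, \bar{q}_i)$ while the full orthogonal group acts on $\bar{V}_i$; the $(-1)$-determinant reflection on $L_i$ lifts to an element of $\tilde{G}$ that is forced into $\ker\varphi$, giving $\alpha$ independent $\mathbb{Z}/2\mathbb{Z}$ factors. Second, whenever $L_j$ is of type $I$ and $L_{j+2}$ is of type $II$, the size-two gap in the Jordan chain permits a "swap-and-sign" involution mixing these two blocks that preserves $Q_L$ and acts trivially on every $\bar{V}_i$; this yields $\beta$ further independent $\mathbb{Z}/2\mathbb{Z}$ factors. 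The splitting of
\[
1 \to (\ker\varphi)/R_u\tilde{G} \to \tilde{G}/R_u\tilde{G} \to \prod_i \mathrm{O}(\bar{V}_i, \bar{q}_i)^{\mathrm{red}} \to 1
\]
would then be produced by lifting each generator in the image back to an explicit isometry of $L$.

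The main obstacle will be the $\beta$ contribution: pinpointing exactly which pairs $(L_j, L_{j+2})$ admit such a swap-and-sign involution, checking that strictly larger gaps in the Jordan chain do \emph{not} contribute, and verifying that the resulting involutions are linearly independent in the component group from each other and from the $\alpha$ reflections. This requires careful bookkeeping of how the parity type and the bilinear data $\langle -,- \rangle_{Q_L}$ propagate across adjacent Jordan constituents; the technical heart of the argument is the same combinatorial case analysis that governs the structure of $\tilde{G}$ in Sections 3--4 of \cite{Cho}, and one must in particular rule out accidental coincidences between reductive-part elements and these discrete involutions modulo $R_u\tilde{G}$.
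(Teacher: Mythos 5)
The paper quotes this result from \cite{Cho} without reproducing a proof, so the assessment has to rest on the proposal itself. Your skeleton --- use the surjection $\varphi$ of \cite{Cho}, Theorem 4.1, show $(\ker\varphi)^\circ = R_u\tilde{G}$, compute the finite quotient $\ker\varphi/R_u\tilde{G}$, and split the resulting extension --- is a plausible outline, but the two concrete mechanisms you offer for the component group are not sound as stated. The $\beta$-generator you describe, a ``swap-and-sign'' involution interchanging the Jordan constituents $L_j$ and $L_{j+2}$, cannot exist: those blocks carry quadratic forms of scales $2^j$ and $2^{j+2}$, so any lattice automorphism taking $L_j$ onto $L_{j+2}$ would force $Q_L(L_j)\subset 2^{j+2}\frko$, contradicting that $\bfs(L_j)=(2^j)$. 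The true source of the $\beta$-factor is a far more delicate relation on the special fibre linking the filtration data $B(A_j)/Z(A_j)$ at level $j$ with the quadratic data two levels up; a literal block swap is not it, and you already (correctly) identify this as the weakest link.

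Your $\alpha$-mechanism is likewise under-justified. You assert that the $(-1)$-determinant reflection on a free $I^e_1$ constituent $L_i$ is ``forced into $\ker\varphi$,'' but nothing you have says why; a priori such an element reduces to a perfectly good nontrivial point of $\mathrm{O}(\bar{V}_i,\bar{q}_i)^{\mathrm{red}}(\frkk)$. What is really happening is a characteristic-$2$ parity phenomenon: for $L_i$ free of type $I^e_1$ one has $\dim\bar{V}_i=n_i-1$ odd, so $\mathrm{O}(\bar{V}_i,\bar{q}_i)^{\mathrm{red}}$ is connected (the Dickson invariant collapses and the reduced group is symplectic), and therefore this $\varphi_i$ \emph{cannot} detect the extra component of $\tilde{G}/R_u\tilde{G}$; the $\alpha$-class must be produced and recognized entirely inside $\ker\varphi$. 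This Dickson-invariant and odd/even-dimension bookkeeping --- which is also what guarantees that $\varphi$ is genuinely surjective onto $\prod_i \mathrm{O}(\bar{V}_i,\bar{q}_i)^{\mathrm{red}}$ given the image table's $\mathrm{SO}$ entries --- is absent from the proposal, as is any argument that the final extension actually splits as a direct product and that the proposed order-two classes are mutually independent modulo $R_u\tilde{G}$. These are exactly the points that carry the content of the theorem, so as written the proposal does not yet establish it.
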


Let
\begin{itemize}
  \item $b$ be the total number of pairs of adjacent constituents $L_i$ and $L_{i+1}$ that are both \textit{of type I}.
  \item $c$ be the sum of ranks of all nonzero Jordan constituents $L_i$ that are \textit{of type} $\textit{II}$.
\end{itemize}

\begin{theorem}[\cite{Cho}, Theorem 5.2]\label{thmcho52}
The local density of $(L,Q_L)$ defined in \cite{Cho}, which we are denoting in this paper by $\beta^\mathrm{C}(L)$, is
\[
\beta^\mathrm{C}(L)=\frac{1}{[G:G^{\circ}]}q^N \cdot q^{-\mathrm{dim} G} \sharp\tilde{G}(\frkk),
\]
where 
\begin{align*}
N
=&t+\sum_{i<j} i\cdot n_i\cdot n_j+\sum_i i\cdot n_i\cdot (n_i+1)/2-b+c, \\
t =&\text{ the total number of $L_i$'s that are \textit{of type I}}.
\end{align*}
\end{theorem}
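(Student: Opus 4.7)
The plan is to recast $\beta^C(L)$ as a volume integral over the smooth integral model $\underline{G}$ of $G=\mathrm{O}(V,Q_V)$ constructed in \cite{Cho}. Since $\underline{G}(\frko)=\mathrm{O}_\frko(L)$ by design, one has
$$\beta^C(L)=\frac{1}{[G:G^\circ]}\int_{\underline{G}(\frko)}|\omega_L^{C,\mathrm{ld}}|.$$
Let $\omega_{\underline{G}}$ denote any translation-invariant top-degree form on $\underline{G}$, unique up to an $\frko^\times$-scalar. Smoothness of $\underline{G}/\frko$ gives $\int_{\underline{G}(\frko)}|\omega_{\underline{G}}|=q^{-\dim G}\sharp\tilde{G}(\frkk)$. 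The problem therefore reduces to computing the ratio $\omega_L^{C,\mathrm{ld}}/\omega_{\underline{G}}$, which by $G$-invariance of both forms must be a constant in $F^\times$; the content of the theorem is that its normalized absolute value equals $q^N$.

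To evaluate that ratio I would work in local coordinates near the identity coming from the Lie algebra scheme $\underline{\frkg}$. Choosing an $\frko$-basis of $L$ adapted to the Jordan splitting $L=\bigoplus_i L_i$, an endomorphism $X=(X_{ij})$ with $X_{ij}\in\Hom(L_j,L_i)$ is orthogonal at first order precisely when the scaled skew-symmetry relation $2^i\,{}^tX_{ji}+2^jX_{ij}=0$ holds blockwise. Tracking this condition through the naive matrix coordinates underlying $\omega_L^{C,\mathrm{ld}}$, and counting the forced powers of $2$ in off-diagonal versus diagonal blocks, yields the quadratic contribution $\sum_{i<j}i\cdot n_i n_j+\sum_i i\cdot n_i(n_i+1)/2$ to $N$.

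The smooth model $\underline{G}$, however, is not simply the naive scheme of integral orthogonal matrices: it is obtained from that by successive blow-ups along the Jordan strata, which impose additional integrality conditions on $\underline{\frkg}(\frko)$ depending on the parity type of each $L_i$ and its adjacencies. The contribution $t$ reflects an extra factor of $q$ forced by each type $\mathrm{I}$ constituent (where the smooth model requires a finer blow-up than in the type $\mathrm{II}$ case); $-b$ corrects for the overlap between two consecutive type $\mathrm{I}$ blocks whose blow-ups interact; and $c$ accounts for the free directions in type $\mathrm{II}$ constituents that are cut out. Combined with Theorem \ref{thmcho412}, which factors $\sharp\tilde{G}(\frkk)$ through the images of the $\varphi_i$ and the reductive quotient, this produces the stated formula.

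The main obstacle is the last step: pinning down exactly which sublattice of $\mathrm{M}_n(\frko)$ is the image of an open neighborhood of the identity in $\underline{G}(\frko)$ under the naive matrix coordinates. This requires a careful case analysis depending on the types of $L_{i-1},L_i,L_{i+1}$ and the \emph{bound}/\emph{free} dichotomy, and it is precisely this analysis that produces the correction $t-b+c$; by contrast, the quadratic part of $N$ is a routine block Jacobian calculation once the right coordinates are fixed.
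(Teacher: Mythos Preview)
The paper does not prove this statement: Theorem~\ref{thmcho52} is quoted verbatim from \cite{Cho}, Theorem~5.2, and no argument is supplied here. So there is no ``paper's own proof'' to compare against; the result is imported as a black box and used downstream in Section~\ref{secfr}.

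That said, your outline is the correct architecture and matches the strategy actually carried out in \cite{Cho} (which in turn follows Gan--Yu \cite{GY}): one writes $\beta^C(L)$ as the volume of $\underline{G}(\frko)$ against the canonical form $\omega_L^{C,\mathrm{ld}}$, invokes smoothness of $\underline{G}$ to get $\int_{\underline{G}(\frko)}|\omega_{\underline{G}}|=q^{-\dim G}\sharp\tilde G(\frkk)$, and then computes the valuation of the ratio $\omega_L^{C,\mathrm{ld}}/\omega_{\underline{G}}$ block by block. The quadratic contribution $\sum_{i<j}i\,n_in_j+\sum_i i\,n_i(n_i+1)/2$ does indeed come from the block Jacobian exactly as you describe.

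Your account of the remaining terms $t-b+c$ is, however, too impressionistic to count as a proof, and in one place the heuristic is misleading. You describe $c$ as ``free directions in type~II constituents that are cut out,'' but $c$ is the \emph{sum of the ranks} of all type~II Jordan constituents, not a small correction term; it arises because for a type~II block the naive integral model already has the correct Lie algebra on the diagonal, whereas for type~I blocks one must dilate. Similarly, the smooth model in \cite{Cho} is built by iterated dilatation along explicit closed subgroup schemes (not generic blow-ups along ``Jordan strata''), and tracking how each dilatation shifts the canonical form by a specific power of $q$ is exactly the case analysis you flag as the ``main obstacle.'' You have correctly identified where the work lies, but you have not done it; the interactions producing $t$ and $-b$ depend on the explicit description of $B_i$ and $Z_i$ from Section~\ref{sec:5.1}, not just on abstract adjacency of type~I blocks.
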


In the above local density formula, 
\[
\sharp\tilde{G}(\frkk)=\sharp R_u\tilde{G}(\frkk)\cdot \sharp (\tilde{G}/R_u\tilde{G})(\frkk).
\]

\section{Reformulation of the local density formula}\label{sec:5.2}
We are now ready to explain our main result.
In this section, we show that the local density $\beta(L)$ is determined by  a series of  Gross-Keating invariants and the (truncated) extended GK datums (cf. Theorem \ref{thm-ldgk}). 
We keep assuming that $F$ is unramified over $\mathbb{Q}_2$.
Let $(L, Q_L)$ be the quadratic lattice  represented by 
$B\in \calhnd_n(\frko)$.

\subsection{Reduced form of Ikeda and Katsurada}\label{sss1}
 In \cite{IK1}, Ikeda  and Katsurada introduced the notion of a  `reduced form' associated to $B$ and  showed it to be optimal.
We use a reduced form several times in this paper and thus
 provide its detailed definition through  Definitions \ref{def3.1}-\ref{def3.2}.
 They are taken from \cite{IK1}  for synchronization.
The main result of this subsection   is Proposition \ref{propi-ii}, which will be used in the next subsection.

Let $\mathfrak{S}_n$ be the symmetric group of degree $n$.
Let $\sigma\in \mathfrak{S}_n$ be an involution i.e. $\sigma^2=id$.
For a non-decreasing sequence of non-negative integers $\underline{a}=(a_1, \cdots, a_n)$, 
we set
\[\mathcal{P}^0=\mathcal{P}^0(\sigma)=\{i|1\leq i \leq n, i=\sigma(i)\}, \]
\[\mathcal{P}^+=\mathcal{P}^+(\sigma)=\{i|1\leq i \leq n, a_i>a_{\sigma(i)}\}, \]
\[\mathcal{P}^-=\mathcal{P}^-(\sigma)=\{i|1\leq i \leq n, a_i<a_{\sigma(i)}\}. \]

\begin{definition}[\cite{IK1}, Definition 3.1]\label{def3.1}
We say that an involution $\sigma\in \mathfrak{S}_n$ is  $\underline{a}$-admissible  if the following three conditions are satisfied:
\begin{itemize}
\item[(i)] 
$\calp^0$ has at most two elements.
If $\calp^0$ has two distinct elements $i$ and $j$, then $a_i\not\equiv a_j \text{ mod $2$}$.
Moreover, if $i \in   \calp^0$, then 
\[
a_i=\max\{ a_j \, |\, j\in \calp^0\cup\calp^+, \, a_j\equiv a_i \text{ mod }2\}.
\] 
\item[(ii)]
For $s=1, \ldots, r$, we have
\[
\#(\mathcal{P}^+\cap I_s)\leq 1, ~~~~~~\textit{   }~~~~~~~~
\#(\mathcal{P}^-\cap I_s)+\#(\mathcal{P}^0\cap I_s)\leq 1.
\]
Here, $I_s$ is defined in Definition \ref{def3.0}.

\item[(iii)]
If $i \in   \calp^-$, then 
\[
a_{\sig(i)}=\min\{a_j \,| \, j\in \calp^+, a_j>a_i,\, a_j\equiv a_i \text{ mod } 2\}.
\]
Similarly, 
if $i \in   \calp^+$, then 
\[
a_{\sig(i)}=\max\{a_j \,| \, j\in \calp^-, a_j<a_i,\, a_j\equiv a_i \text{ mod } 2\}.
\]
\end{itemize}
If $\sig$ is an $\ua$-admissible involution, the pair $(\ua, \sig)$ is called a  GK type. 
\end{definition}

\begin{definition}[\cite{IK1}, Definition 3.2]\label{def3.2}
Write $B=\begin{pmatrix}b_{ij}\end{pmatrix}\in \calhnd_n(\frko)$.
Let $\underline{a}\in S(B)$ (cf. Definition \ref{def:2.1}.(1)).
Let $\sigma\in \mathfrak{S}_n$ be an $\underline{a}$-admissible involution.
We say that $B$ is a reduced form of  GK-type $(\underline{a}, \sigma)$ if the following conditions are satisfied:
\begin{enumerate}
\item If $i \notin \mathcal{P}^0$, $j=\sigma(i)$, and $a_i\leq a_j$, then
\[\mathrm{GK}\begin{pmatrix}\begin{pmatrix}b_{ii} & b_{ij}\\ b_{ij}&b_{jj}\end{pmatrix}\end{pmatrix}=(a_i, a_j).\]
Note that this condition is equivalent to the following condition (by Proposition 2.3 of \cite{IK1}).
\[\left\{
  \begin{array}{l l}
  \mathrm{ord}(2b_{ij})=\frac{a_i+a_{j}}{2}   & \quad    \text{if $i\notin \mathcal{P}^0$, $j=\sigma(i)$};\\
   \mathrm{ord}(b_{ii})=a_i   & \quad    \text{if $i\in \mathcal{P}^-$}.
    \end{array} \right.\]

\item if $i\in \mathcal{P}^0$, then
\[\mathrm{ord}(b_{ii})=a_i.\]

\item If $j\neq i, \sigma(i)$, then
\[\mathrm{ord}(2b_{ij})>\frac{a_i+a_j}{2}.\]
\end{enumerate}

Saying just	`reduced form' without an $\underline{a}$ or a $\sigma$ means `reduced form' of GK-type $(\underline{a}, \sigma)$ for some non-increasing sequence $\underline{a}$ of integers and an $\underline{a}$-admissible involution $\sigma$. 
\end{definition}

\begin{theorem}[\cite{IK1}, Corollary 5.1]\label{thm5.1}
A reduced form is optimal.
More precisely, if $B$ is a reduced form of  GK-type $(\underline{a}, \sigma)$, then
$$\mathrm{GK}(B)=\underline{a}.$$
\end{theorem}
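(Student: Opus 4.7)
The claim is that a reduced form $B$ of GK-type $(\underline{a}, \sigma)$ satisfies $\mathrm{GK}(B) = \underline{a}$, and I would split the proof into the two directions of the lexicographic comparison.

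For the easy direction $\underline{a} \preceq \mathrm{GK}(B)$, it suffices to show $\underline{a} \in S(B)$: the order constraints (1)--(3) in Definition \ref{def3.2} immediately imply $\mathrm{ord}(b_{ii}) \geq a_i$ and $\mathrm{ord}(2 b_{ij}) \geq (a_i + a_j)/2$, which is exactly the defining condition of $S(B)$. Hence $\underline{a} \in S(B) \subseteq \mathbf{S}(\{B\}_{equiv})$, and the lex-maximum satisfies $\mathrm{GK}(B) \succeq \underline{a}$.

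For the hard direction $\mathrm{GK}(B) \preceq \underline{a}$, my plan combines a length bound with an inductive peeling. First, Theorem \ref{thm:2.1} gives $|\mathrm{GK}(B)| = \Delta(B)$. I would compute $\Delta(B)$ directly from the reduced form: up to cross terms of strictly higher order, $B$ is a block-diagonal sum over the $\sigma$-orbits ($1 \times 1$ blocks at fixed points, $2 \times 2$ blocks on the pairs), and these small blocks have explicit GK invariants by Proposition 2.3 of \cite{IK1} (alluded to in Definition \ref{def3.2}(1)). Summing their $|\mathrm{GK}|$ values yields $|\underline{a}|$, so $|\mathrm{GK}(B)| = |\underline{a}|$. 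This length equality alone does not force equality of the sequences, so I would also show, for any $U \in \GL_n(\frko)$ and any $\underline{a}' = (a_1',\dots,a_n') \in S(B[U])$, the bound $a_1' \leq a_1$; once the first coordinate is controlled, one can peel off the $\sigma$-orbit of the index $1$ via a Gauss-style elimination using the strict inequalities in Definition \ref{def3.2}(3), show that the complementary block is again a reduced form of GK-type obtained from $(\underline{a}, \sigma)$ by deleting that orbit, and conclude by induction on $n$.

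The main obstacle will be this peeling lemma: one has to split off the leading $\sigma$-orbit from an arbitrary $B[U]$ and verify that the residual matrix is still reduced. The subtleties are genuinely dyadic---handling fixed points $i \in \mathcal{P}^0$ with their parity constraints from Definition \ref{def3.1}(i), and controlling the factor of $2$ in $\mathrm{ord}(2 b_{ij})$ during Gauss elimination---so the combinatorial bookkeeping encoded in Definition \ref{def3.1} is essential. Once the peeling is secured, the base cases $n = 1, 2$ are immediate, and the induction closes.
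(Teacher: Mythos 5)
This theorem is imported from \cite{IK1} (Corollary~5.1); the present paper gives no proof of its own, so I can only assess whether your sketch would complete on its own terms. The easy direction is fine (though note $\underline{a}\in S(B)$ is a stated hypothesis of Definition~\ref{def3.2}, not a consequence of its conditions (1)--(3)).

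The hard direction has a genuine gap, and it is the whole theorem. You correctly note that the length identity $|\mathrm{GK}(B)|=|\underline{a}|$ alone does not force termwise equality of non-decreasing sequences (e.g.\ $(1,1)$ and $(0,2)$ have equal sums yet $(1,1)\succ(0,2)$), so everything rests on the ``peeling lemma,'' which you flag as the main obstacle but do not prove. Concretely, after deducing $a_1'\leq a_1$ from $a_1=\mathrm{ord}(\bfn(L))$, you must --- for an arbitrary $U$ with $a_1'=a_1$ --- extract from $B[U]$, not from the reduced $B$, a sublattice matching the leading $\sigma$-orbit, verify its orthogonal complement is represented by a reduced form of the truncated GK-type, and bound the tail of $\underline{a}'$ by the $S$-set of that complement. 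That is a dyadic Witt-cancellation argument and is exactly the nontrivial content the machinery of \cite{IK1} is built to deliver; moreover their induction runs over the top-left truncations $B^{(k)}$ at the jumps $a_k<a_{k+1}$ (which is why Theorem~\ref{thm:2.4} is phrased in terms of $B^{(k)}$), not over peeling the minimal $\sigma$-orbit from the bottom, so there is no reason to expect your bottom-up variant to close without essentially the same work. A secondary issue: ``compute $\Delta(B)$ directly'' is not automatic when $n$ is even, since $\Delta(B)$ depends on the square class of $D_B$ through $\frkD_B$ and $\xi_B$, and the strictly-higher-order cross terms a reduced form allows can in principle change that square class. You have identified the right target and the right obstacle, but the obstacle \emph{is} the theorem, not a bookkeeping step, and the sketch leaves it open.
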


We list a few facts about the above definitions.
\begin{remark}\label{rmk1}
\begin{enumerate}
\item For any given non-decreasing sequence of non-negative integers $\underline{a}=(a_1, \cdots, a_n)$,
there always exists an $\underline{a}$-admissible involution (cf. the paragraph following Definition 3.1 of \cite{IK1}).

\item For  $B \in \calhnd_n(\frko)$,
there always exist  a  $\mathrm{GK}(B)$-admissible involution $\sigma$ and
a reduced form of  GK type $(\mathrm{GK}(B), \sigma)$, which is equivalent to $B$
(cf. Theorem 4.1 of \cite{IK1}).
Equivalently, for a quadratic lattice $(L, Q_L)$, there always exists a reduced form which represents the integral quadratic form $Q_L$.

Such an involution $\sigma$ is not
unique, but it is so up to a certain notion of equivalence, as will be recalled in Remark \ref{rmk11}.

\item The first integer of $\mathrm{GK}(L)$ is 
the exponential valuation of a generator of $\bfn(L)$ (cf. Lemma B.1 of \cite{thyang}).
\end{enumerate}
\end{remark}

\begin{remark}\label{rmk11}
We say that two $\underline{a}$-admissible involutions are equivalent if they are conjugate by an element of $\mathfrak{S}_{n_1}\times \cdots \times \mathfrak{S}_{n_r}$.
Here, we follow the notation introduced in Definition \ref{def3.0} to specify the integers $n_1, \cdots, n_r$.
If $\sigma$ is an $\underline{a}$-admissible involution, then the equivalence class of $\sigma$ is determined by 
\begin{equation}\label{eqset}
\#(\mathcal{P}^+\cap I_s), ~~\textit{  }~~~ \#(\mathcal{P}^-\cap I_s),  ~~\textit{  }~~~  \#(\mathcal{P}^0\cap I_s)
\end{equation}
for $1\leq s \leq r$ (cf. the paragraph following Remark 4.1 in \cite{IK1}).

Let $\sigma$ and $\tau$ be  $\mathrm{GK}(B)$-admissible involutions 
associated to reduced forms of  GK types $(\mathrm{GK}(B), \sigma)$ and $(\mathrm{GK}(B), \tau)$, respectively, which are equivalent to a given symmetric matrix $B$.
 Then $\sigma$ and $\tau$ are equivalent (cf. Theorem 4.2 of \cite{IK1}).
 Therefore, the above sets in (\ref{eqset}) for $B$ are independent of the choice of a 
 $\mathrm{GK}(B)$-admissible involution with a reduced form.
\end{remark}



\begin{lemma}\label{lemgk}
  If $\mathrm{GK}(B)=(a_1, \cdots, a_n)$, then
$$\mathrm{GK}(2^lB)=(a_1+l, \cdots, a_n+l).$$
\end{lemma}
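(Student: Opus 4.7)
The plan is to unwind the definitions directly and exhibit an order-preserving bijection from $\mathbf{S}(\{B\}_{equiv})$ to $\mathbf{S}(\{2^l B\}_{equiv})$ given by componentwise addition of $l$. Since $F$ is unramified over $\QQ_2$ (so $\ord(2)=1$), scaling the entries of $B$ by $2^l$ shifts every order by exactly $l$.

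First I would show $S(2^l B) = \{(a_1+l,\ldots,a_n+l)\mid (a_1,\ldots,a_n)\in S(B)\}$. The diagonal condition for $S(2^l B)$ reads $\ord(2^l b_{ii})\ge a_i+l$, which collapses to $\ord(b_{ii})\ge a_i$; the off-diagonal condition reads $\ord(2\cdot 2^l b_{ij})\ge (a_i+a_j)/2 + l$, which collapses to $\ord(2 b_{ij})\ge (a_i+a_j)/2$. Thus the shift map is a bijection between $S(B)$ and $S(2^l B)$, provided $l\ge 0$ so that both sequences lie in $\ZZ_{\ge 0}^n$.

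Next I would extend this to equivalence classes. Since $(2^l B)[U] = 2^l (B[U])$ for every $U\in\GL_n(\frko)$, we have $\{2^l B\}_{equiv}=\{2^l B'\mid B'\in\{B\}_{equiv}\}$. Applying the pointwise bijection above to each $B'\in\{B\}_{equiv}$ and taking unions yields an overall bijection
\[
\mathbf{S}(\{B\}_{equiv})\;\longrightarrow\;\mathbf{S}(\{2^l B\}_{equiv}),\qquad (a_1,\ldots,a_n)\mapsto (a_1+l,\ldots,a_n+l).
\]

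Finally, the componentwise shift by a constant is strictly order-preserving for the lexicographic order $\succeq$ on $\ZZ_{\ge 0}^n$, so the maximum of $\mathbf{S}(\{2^l B\}_{equiv})$ is the image of the maximum of $\mathbf{S}(\{B\}_{equiv})$. This gives $\GK(2^l B)=(a_1+l,\ldots,a_n+l)$, as desired. There is no real obstacle here: the lemma is purely a bookkeeping statement about how the defining inequalities of $S(B)$ scale, and no use of reduced forms, optimality, or the invariants $\xi,\eta$ is needed.
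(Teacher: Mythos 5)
Your proof has a genuine gap: the claim that the shift map $(a_1,\ldots,a_n)\mapsto(a_1+l,\ldots,a_n+l)$ is a \emph{bijection} from $S(B)$ to $S(2^lB)$ is false. Your inequalities show that shifting sends $S(B)$ \emph{into} $S(2^lB)$, i.e.\ the map is an injection, but it is generally not surjective. Concretely, take $B=(1)$ of size $1$ with $l=1$: then $S(B)=\{(0)\}$ while $S(2B)=\{(0),(1)\}$, so the sets have different cardinalities. More generally, $(0,\ldots,0)\in S(2^lB)$ always, but $(-l,\ldots,-l)\notin\ZZ^n_{\geq 0}$, so it is not the image of anything in $S(B)$. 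The caveat ``provided $l\ge 0$ so that both sequences lie in $\ZZ_{\ge 0}^n$'' covers the forward direction only; it does not make the inverse shift land in $\ZZ_{\ge 0}^n$.

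The conclusion can still be rescued from your set-up, but it takes an extra observation. Since $(a_1+l,\ldots,a_n+l)\in\bfS(\{2^lB\}_{equiv})$ and $a_1\geq 0$, the maximum $(c_1,\ldots,c_n)=\GK(2^lB)$ satisfies $(c_1,\ldots,c_n)\succeq(a_1+l,\ldots,a_n+l)$, hence $c_1\geq l$ and (as the sequence is non-decreasing) $c_i\geq l$ for all $i$. Only then is the down-shift $(c_1-l,\ldots,c_n-l)$ a legitimate element of $\ZZ^n_{\geq 0}$ lying in $\bfS(\{B\}_{equiv})$, giving $\GK(B)\succeq(c_1-l,\ldots,c_n-l)$ and hence equality. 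This is exactly the two-sided inequality argument the paper uses ($\GK(2^lB)\succeq\GK(B)+(l,\ldots,l)$ and, applying the same reasoning to $B=2^{-l}\cdot 2^lB$, the reverse), without ever asserting a bijection of the $S$-sets. So: your inequality computations are correct and the lemma is indeed elementary bookkeeping, but you should replace the bijection claim with the injection-plus-lower-bound argument, or simply argue both directions of the lexicographic inequality as the paper does.
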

\begin{proof}
We write $\mathrm{GK}(2^lB)=(b_1, \cdots, b_n)$. 
It is obvious by Definition \ref{def:2.1} that 
$(b_1, \cdots, b_n) \succeq (a_1+l, \cdots, a_n+l)$.
Since $\mathrm{GK}(B)=\mathrm{GK}(2^{-l}\cdot 2^lB)$, 
we also have $(a_1, \cdots, a_n)\succeq (b_1-l, \cdots, b_n-l)$.
Thus we have 
$(b_1, \cdots, b_n) = (a_1+l, \cdots, a_n+l)$.
\end{proof}

Let $\sigma\in \mathfrak{S}_n$ be an $\underline{a}$-admissible involution
and let $\tau \in \mathfrak{S}_m$ be a  $\underline{b}$-admissible involution.
We choose embeddings of $\underline{a}$ and $\underline{b}$ into $\underline{a} \cup \underline{b}$.
Here, the notion of $\underline{a} \cup \underline{b}$ is given at the beginning of Section \ref{ssnota}.
The involution $\sigma\cup \tau$ is defined as the element in $\mathfrak{S}_{n+m}$ such that the restriction of $\sigma\cup \tau$ to $\underline{a}$ (resp. $\underline{b}$)
along the embedding is well-defined and is the same as $\sigma$ (resp. $\tau$).
If we assume that both $\mathcal{P}^0(\sigma)$ and $\mathcal{P}^+(\sigma)$  are empty (thus $\mathcal{P}^-(\sigma)$ is empty as well),
i.e. $\sigma(i)\neq i$  and $a_i=a_{\sigma(i)}$ for any $1\leq i \leq n$, 
then it is easy to see that $\sigma\cup \tau$ is an  $\underline{a}\cup \underline{b}$-admissible involution
for any pair of embeddings from $\underline{a}$ and $\underline{b}$ into $\underline{a} \cup \underline{b}$.

\begin{lemma}\label{red}
Let $B=X\bot Y$ be of size $(n+2)\times (n+2)$, where $X=2^l\begin{pmatrix} 2u& w \\ w & 2v  \end{pmatrix}$
with  $w \in \mathfrak{o}^{\times}$ and  $u, v \in \mathfrak{o}$.
Then  \[\mathrm{GK}(B)=\mathrm{GK}(X)\cup \mathrm{GK}(Y).\]
\end{lemma}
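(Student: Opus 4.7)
The plan is to exhibit a reduced form equivalent to $B$ whose GK-type sequence is $\GK(X)\cup\GK(Y)$, and then invoke Theorem \ref{thm5.1} to identify $\GK(B)$.

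The first (and essentially only nontrivial) step is to show $\GK(X)=(l+1,l+1)$ and that $X$ itself is reduced. The inequalities $\ord(2^{l+1}u)\geq l+1$, $\ord(2^{l+1}v)\geq l+1$, and $\ord(2\cdot 2^l w)=l+1$ (using $w\in\mathfrak{o}^\times$) show that $(l+1,l+1)\in S(X)$, so $\GK(X)\succeq (l+1,l+1)$ in the lexicographic order. Conversely, $\det X=2^{2l}(4uv-w^2)$ has valuation exactly $2l$ because $w^2$ is a unit; hence $\ord(D_X)=2l+2$, and a case check on Definition \ref{def:0.3} yields $\Delta(X)\leq 2l+2$. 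Combining with Theorem \ref{thm:2.1} and the non-decreasing condition on GK tuples, the only possibility is $\GK(X)=(l+1,l+1)$. Checking Definition \ref{def3.2} directly (the crucial condition being $\ord(2b_{12})=l+1=(a_1+a_2)/2$) then shows that $X$ is already a reduced form of GK type $((l+1,l+1),\tau_X)$, where $\tau_X=(1\,2)\in\mathfrak{S}_2$; in particular $\mathcal{P}^0(\tau_X)$ and $\mathcal{P}^+(\tau_X)$ are both empty.

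Next, by Remark \ref{rmk1}(2) I would fix a reduced form $Y^\prime\sim Y$ of GK type $(\GK(Y),\sigma_Y)$, so that $B\sim X\perp Y^\prime$. Setting $\underline{a}:=\GK(X)\cup \GK(Y)$ and $\sigma:=\tau_X\cup \sigma_Y$, the discussion immediately preceding this lemma guarantees that $\sigma$ is $\underline{a}$-admissible, precisely because $\mathcal{P}^0(\tau_X)$ and $\mathcal{P}^+(\tau_X)$ vanish and $\tau_X(i)\neq i$ with equal values $a_1=a_2$.

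Finally, I would verify the three conditions of Definition \ref{def3.2} for $X\perp Y^\prime$ with respect to $(\underline{a},\sigma)$. Conditions (1) and (2) restricted to indices lying entirely in the $X$-block or entirely in the $Y^\prime$-block are inherited from the fact that $X$ and $Y^\prime$ are individually reduced. Condition (3) for cross-block pairs $i$, $j$ is immediate because $b_{ij}=0$ in $X\perp Y^\prime$, so $\ord(2b_{ij})=+\infty$ dominates any finite bound. Thus $X\perp Y^\prime$ is a reduced form of GK type $(\underline{a},\sigma)$, and Theorem \ref{thm5.1} yields $\GK(B)=\GK(X\perp Y^\prime)=\underline{a}=\GK(X)\cup \GK(Y)$. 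The only real work is pinning down $\GK(X)$; once that is achieved, everything else is a formal consequence of the block structure and the vanishing of cross-block entries.
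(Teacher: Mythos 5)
Your proposal is correct and follows essentially the same approach as the paper: establish that $\GK(X)=(l+1,l+1)$ and that $X$ is itself a reduced form with empty $\mathcal{P}^0$ and $\mathcal{P}^+$, pick a reduced form $Y'\sim Y$, form the admissible involution $\tau_X\cup\sigma_Y$ on the sorted sequence $\GK(X)\cup\GK(Y)$, verify the conditions of Definition~\ref{def3.2} (the cross-block terms being trivial since the off-diagonal block is zero), and conclude by Theorem~\ref{thm5.1}. The only cosmetic difference is that you derive $\GK(X)=(l+1,l+1)$ directly from the $\Delta$-formula and Theorem~\ref{thm:2.1}, whereas the paper cites Proposition~2.3 of \cite{IK1} together with Lemma~\ref{lemgk}, and the paper spells out the reordering permutation $\varphi$ that you leave implicit.
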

\begin{proof}
Note that $\mathrm{GK}(X)=(l+1, l+1)$ by Proposition 2.3 of \cite{IK1} and Lemma \ref{lemgk}.
Let $\underline{a}=\mathrm{GK}(X)$ and let $\sigma$ be the associated non-trivial  $\underline{a}$-admissible involution (i.e. $\sigma(1)=2$).
Then  $\mathcal{P}^0(\sigma)$ and $\mathcal{P}^+(\sigma)$ are empty and $X$ is a reduced form of   GK-type $(\underline{a}, \sigma)$.

Let $Y'$ be a reduced form  of   GK-type $(\underline{b}, \tau)$ which is equivalent to $Y$, where $\underline{b}=\mathrm{GK}(Y)$.
The existence of a reduced form is guaranteed by  Remark \ref{rmk1}.(2).
The argument explained just before this lemma yields that $\sigma\cup \tau$ is an   $\underline{a}\cup \underline{b}$-admissible involution.
Let $(M, Q_M)$ be the quadratic lattice  represented by  the symmetric matrix $X\bot Y'$.
We choose  $(e_1, \cdots, e_{n+2})$ to be a basis of $M$ 
such that with respect to this basis the symmetric matrix of the quadratic lattice $(M, Q_M)$ is  $X\bot Y'$.

Recall that $\underline{a}\cup \underline{b}$ is a reordered multiset of $\{a_1, a_2, b_1, \cdots, b_n\}$, where 
$\underline{a}=\{a_1, a_2\}$ and $\underline{b}=\{b_1, \cdots, b_n\}$.
Let $\varphi$ be the permutation such that $\varphi(a_1, a_2, b_1, \cdots, b_n)=\underline{a}\cup \underline{b}$.
Here we consider $(a_1, a_2, b_1, \cdots, b_n)$ as an ordered multiset.
We define $(e_1', \cdots, e_{n+2}')$ to be the reordered basis of $M$ such that 
$(e_1', \cdots, e_{n+2}')=\varphi\left(e_1, \cdots, e_{n+2}\right)$.
Then the  symmetric matrix of $M$  with respect  to the reordered basis $(e_1', \cdots, e_{n+2}')$, which is equivalent to $X\bot Y'$,
is a reduced form of  GK-type $(\underline{a}\cup \underline{b},  \sigma\cup \tau)$ by Definition \ref{def3.2}.
The lemma then follows from Theorem \ref{thm5.1}.
\end{proof}

In general, let $B=\oplus B_i$ be a Jordan splitting such that $B_i$ is $i$-modular of size $n_i\times n_i$.
By this, we mean that $B$ is an orthogonal sum of $B_i$'s and each $B_i$ is of the form $2^iB_i'$,
where $B_i'$ is unimodular, i.e.
 all entries of $B_i'$ are elements in  $\mathfrak{o}$ and the determinant of $B_i'$ is a unit in $\mathfrak{o}$.
Each unimodular symmetric matrix $B_i'$ is of one of the following forms (cf. Theorem 2.4 of \cite{Cho}):
\[ \left\{
  \begin{array}{l l}
    (\bigoplus_k \begin{pmatrix} 2a_k& u_k \\ u_k & 2b_k  \end{pmatrix})    & \quad    \textit{: type II};\\
  (\bigoplus_k \begin{pmatrix} 2a_k& u_k \\ u_k & 2b_k  \end{pmatrix}) \oplus (\epsilon)  & \quad    \textit{: type $I^o$};\\
  (\bigoplus_k \begin{pmatrix} 2a_k& u_k \\ u_k & 2b_k  \end{pmatrix}) \oplus (\epsilon) \oplus (\epsilon')   & \quad    \textit{: type $I^e$}.
    \end{array} \right.\]
Here, $a_k, b_k \in \mathfrak{o}$ and $u_k, \epsilon, \epsilon' \in \mathfrak{o}^{\times}$.
Then we have the following reduction formula about $\mathrm{GK}(B)$ by using  Lemma \ref{red} inductively.

\begin{proposition}\label{propi-ii}
Let $B=\oplus B_i$ be a Jordan splitting such that $B_i$ is $i$-modular of size $n_i\times n_i$.
By using the argument explained in the paragraph just before this proposition, 
we write $B_i=B_i^{\dag}\bot B_i^{\ddag}$ such that $B_i^{\dag}$ is \textit{of type II} and $B_i^{\ddag}$ is
 empty (if $B_i$ is \textit{of type} $II$), of rank 1 (if $B_i$ is \textit{of type} $I^o$),
or of rank 2 (if $B_i$ is \textit{of type} $I^e$).
Then  \[\mathrm{GK}(B)=\mathrm{GK}(\oplus B_i^{\dag})\cup \mathrm{GK}(\oplus B_i^{\ddag})\]
and
\begin{multline*}
\mathrm{GK}(\oplus B_i^{\dag})=(\bigcup_{\textit{$L_i$:of type $II$}}(i+1, i+1)^{n_i/2}) \cup\\
(\bigcup_{\textit{$L_i$:of type $I^0$}}(i+1, i+1)^{(n_i-1)/2})\cup (\bigcup_{\textit{$L_i$:of type $I^e$}}(i+1, i+1)^{(n_i-2)/2}).   
\end{multline*}
Here, $(i+1, i+1)^{n_i/2}=\cup_{n_i/2}(i+1, i+1)$ and so on.
\end{proposition}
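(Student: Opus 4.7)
The strategy is a direct induction on the total number of type II rank-$2$ Jordan blocks using Lemma \ref{red}. By the structural classification quoted just before the statement, each $B_i^{\dag}$ (of type II) admits a decomposition
\[
B_i^{\dag}=\bigperp_{k=1}^{m_i} 2^i\begin{pmatrix} 2a_{i,k} & u_{i,k} \\ u_{i,k} & 2b_{i,k} \end{pmatrix},
\]
where $u_{i,k}\in\frko^\times$, $a_{i,k},b_{i,k}\in\frko$, and where $m_i=n_i/2$, $(n_i-1)/2$, or $(n_i-2)/2$ according as $L_i$ is of type $II$, $I^o$, or $I^e$ respectively. Each such $2\times 2$ block is exactly of the shape $X$ appearing in Lemma \ref{red} with $l=i$.

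Enumerate all these rank-$2$ blocks occurring in $\oplus_i B_i^{\dag}$ in some order $X^{(1)},\dots,X^{(M)}$, so that $B\sim X^{(1)}\perp\cdots\perp X^{(M)}\perp(\oplus_i B_i^{\ddag})$. I first apply Lemma \ref{red} to $B$ with $X=X^{(1)}$ and $Y=X^{(2)}\perp\cdots\perp X^{(M)}\perp(\oplus_i B_i^{\ddag})$, obtaining $\GK(B)=\GK(X^{(1)})\cup \GK(Y)$. Iterating this step $M$ times, I conclude
\[
\GK(B)=\GK(X^{(1)})\cup\cdots\cup\GK(X^{(M)})\cup\GK(\oplus_i B_i^{\ddag}),
\]
which is precisely the first assertion $\GK(B)=\GK(\oplus_i B_i^{\dag})\cup \GK(\oplus_i B_i^{\ddag})$ once one also notes that applying the same induction to $\oplus_i B_i^{\dag}$ alone yields $\GK(\oplus_i B_i^{\dag})=\GK(X^{(1)})\cup\cdots\cup\GK(X^{(M)})$.

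For the explicit evaluation, for each individual block $X^{(k)}=2^i\bigl(\begin{smallmatrix}2a & u\\u & 2b\end{smallmatrix}\bigr)$ coming from $B_i^{\dag}$, Proposition 2.3 of \cite{IK1} shows that the unimodular matrix $\bigl(\begin{smallmatrix}2a & u\\u & 2b\end{smallmatrix}\bigr)$ has Gross--Keating invariant $(1,1)$, and then Lemma \ref{lemgk} gives $\GK(X^{(k)})=(i+1,i+1)$. Combining this with the counts of blocks $m_i$ above yields the stated formula for $\GK(\oplus_i B_i^{\dag})$.

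The only point that requires attention is the hypothesis of Lemma \ref{red}: at each inductive step the residual matrix $Y$ must still be a non-degenerate half-integral symmetric matrix, which is automatic since an orthogonal summand of a non-degenerate half-integral matrix retains both properties. No subtle combinatorial issue arises because we never peel off anything from $\oplus_i B_i^{\ddag}$; the induction terminates with $\oplus_i B_i^{\ddag}$ as the remainder, and its $\GK$-invariant is not further analyzed here.
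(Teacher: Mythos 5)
Your proof is correct and takes essentially the same approach as the paper: the paper's proof is a one-liner that decomposes each $B_i^{\dag}$ into $2\times 2$ blocks of the shape in Lemma \ref{red} and then says the proposition ``follows from Lemma \ref{red} inductively,'' and you have simply filled in the details of that induction (peeling off one block at a time) together with the explicit evaluation $\GK\bigl(2^i\bigl(\begin{smallmatrix}2a&u\\u&2b\end{smallmatrix}\bigr)\bigr)=(i+1,i+1)$, which the paper computes inside the proof of Lemma \ref{red}.
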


\begin{proof}
Since $B_i^{\dag}$ is $i$-modular \textit{of type II},  it is equivalent to an orthogonal sum of $2\times 2$ matrices of the form
$2^i\begin{pmatrix} 2a& u \\ u & 2b  \end{pmatrix}$ with  $u \in \mathfrak{o}^{\times}$ and  $a, b \in \mathfrak{o}$
as explained in the paragraph just before this proposition.
Then the proposition follows from  Lemma \ref{red} inductively.
\end{proof}

\subsection{Description in terms of $\GK(L\oplus -L)$}\label{sss2}
Let $(-L, Q_{-L})$ be the quadratic lattice  represented by 
$-B\in \calhnd_n(\frko)$.
Let 
\[
L=\bigoplus_i L_i
\]
be a Jordan splitting such that $\bfs(L_i)=(2^i)$,  allowing some of the $L_i$ to possibly be the zero lattice.
Put $n_i=\mathrm{rank}_\frko L_i$.
Then 
\[
L\oplus -L=\bigoplus_i (L_i\oplus -L_i)
\]
is a Jordan splitting of  $L\oplus -L$ such that $\bfs(L_i\oplus -L_i)=(2^i)$.

The Gross-Keating invariant of $L\oplus -L$ is computed as follows:
\begin{proposition}\label{propgkl-l}
We have that
\[
\mathrm{GK}(L\oplus -L)=\bigcup_i\mathrm{GK}(L_i\oplus -L_i).
\]
Here, 
\[
\mathrm{GK}(L_i\oplus -L_i)=\left\{
  \begin{array}{l l}
 (\underbrace{i+1, \cdots, i+1}_{2n_i})   & \quad  \textit{if $L_i$ is of type II};\\
 (\underbrace{i+1, \cdots, i+1}_{2n_i-2})\cup (i, i+2)   & \quad  \textit{if $L_i$ is of type $I$}.
    \end{array} \right.
\]
\end{proposition}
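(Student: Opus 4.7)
The plan is to decompose the computation block-by-block using the Jordan splitting $L\oplus -L = \bigoplus_i (L_i\oplus -L_i)$, in which each summand is $i$-modular of rank $2n_i$. The type of $L_i \oplus -L_i$ is inherited from that of $L_i$: type $II$ from type $II$, and type $I^e$ from either type $I^o$ or type $I^e$, since $\bfn = \bfs = 2^i \frko$ and the rank $2n_i$ is even. Applying Proposition \ref{propi-ii} to this Jordan splitting, each $L_i\oplus -L_i$ decomposes as $(L_i\oplus -L_i)^\dag \oplus (L_i\oplus -L_i)^\ddag$, where the type $II$ part has rank $2n_i$ (resp.~$2n_i-2$) and the atomic part is empty (resp.~rank $2$ of type $I^e$) when $L_i$ is of type $II$ (resp.~of type $I$). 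The type $II$ contribution $\mathrm{GK}\bigl(\bigoplus_i (L_i\oplus -L_i)^\dag\bigr)$ is then given explicitly by Proposition \ref{propi-ii} as $\bigcup_{L_i\text{ type }II}(i+1,i+1)^{n_i}\cup \bigcup_{L_i\text{ type }I}(i+1,i+1)^{n_i-1}$.

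Next, for $L_i$ of type $I$, I compute $\mathrm{GK}$ of the rank $2$ atom $A_i:=(L_i\oplus -L_i)^\ddag$ via the identity $|\mathrm{GK}|=\Delta$ of Theorem \ref{thm:2.1}. Because $L_i\oplus -L_i$ is hyperbolic over $F$ (with isotropic diagonal subspace $\{(v,v):v\in L_i\otimes F\}$ of dimension $n_i$), its determinant is $(-1)^{n_i}(\det L_i)^2$, so $D_{L_i\oplus -L_i}=4^{n_i}(\det L_i)^2\in F^{\times 2}$. Hence $\xi=1$, $\ord(\mathfrak{D}_B)=0$, and in the unramified setting $\Delta(L_i\oplus -L_i)=\ord(D_B)=2n_i(i+1)$ (using $\ord(2)=1$ and $\ord(\det L_i)=i\cdot n_i$). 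Subtracting the weight $(2n_i-2)(i+1)$ of the type $II$ part yields $\Delta(A_i)=2(i+1)$. Since $A_i$ is of type $I$ at scale $2^i$, Remark \ref{rmk1}(3) forces the smallest entry of $\mathrm{GK}(A_i)$ to equal $i$, and the total weight constraint then gives $\mathrm{GK}(A_i)=(i,i+2)$.

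The main obstacle remaining is to pass from these piecewise computations to the full identity $\mathrm{GK}(L\oplus -L) = \bigcup_i \mathrm{GK}(L_i\oplus -L_i)$, because Proposition \ref{propi-ii} does not further decompose $\mathrm{GK}\bigl(\bigoplus_i A_i\bigr)$ as a union over $i$, and Lemma \ref{red} does not apply since the atoms are all of type $I$, not type $II$. My plan is to construct a reduced form for $L\oplus -L$ directly. By Remark \ref{rmk1}(2), each $2\times 2$ type $II$ Jordan piece and each rank $2$ atom admits a reduced form of its own GK type. Taking the block diagonal sum of these reduced forms and then reordering basis vectors so that the combined sequence of $a$-values becomes non-decreasing produces a matrix which I claim is a reduced form of GK type $(\underline{a},\sigma)$, with $\underline{a}=\bigcup_i \mathrm{GK}(L_i\oplus -L_i)$ and $\sigma$ the induced product of pair-transpositions within each block. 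The key admissibility check against Definition \ref{def3.1} is that each atom contributes exactly one $\calp^-$-element (with $a$-value $i$) and one $\calp^+$-element (with $a$-value $i+2$), while type $II$ pairs contribute nothing to $\calp^\pm$; since different atoms sit at different scales, each equal-$a$-value block receives at most one element from $\calp^-$ and at most one from $\calp^+$, verifying condition (ii). For condition (iii), the smallest $a_j$ with $j\in\calp^+$, $a_j>i$, and $a_j\equiv i\pmod 2$ is exactly $i+2$, realized by the very same atom, since other atoms at scales congruent to $i\pmod 2$ contribute $\calp^+$-values $\geq i+2$. The remaining reduced-form conditions of Definition \ref{def3.2} are immediate because inter-block entries vanish. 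Theorem \ref{thm5.1} then yields $\mathrm{GK}(L\oplus -L)=\underline{a}=\bigcup_i \mathrm{GK}(L_i\oplus -L_i)$, with the explicit formula for each summand extracted from the reduced forms of the two cases above.
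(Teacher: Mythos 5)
Your proof is correct, and it takes a genuinely different route from the paper's for the core block computation. For $\mathrm{GK}(L_i\oplus-L_i)$, the paper works quite hard: it proves by explicit basis manipulation (including a detour through kernels of linear and quadratic forms modulo $2$) that $L_i\oplus-L_i$ is represented by the concrete matrix $2^i(\bigoplus_k \begin{pmatrix}2a_k&u_k\\u_k&2b_k\end{pmatrix}) \oplus 2^i\begin{pmatrix}1&1\\1&4c_i\end{pmatrix}$ in the type~$I$ case, and only then reads off the Gross-Keating invariant from the resulting reduced form. You bypass all of that: you use the identity $|\mathrm{GK}|=\Delta$ from Theorem \ref{thm:2.1}, the observation that $D_{L_i\oplus-L_i}=4^{n_i}(\det L_i)^2\in F^{\times 2}$ (so $\xi=1$ and $\ord(\mathfrak{D})=0$), Proposition \ref{propi-ii} to peel off the type~$II$ weight, and Remark \ref{rmk1}(3) to pin down the first entry as $i$; this determines $\mathrm{GK}(A_i)=(i,i+2)$ with no explicit matrix form at all. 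That is shorter and arguably cleaner. For the final union step, your strategy (block-diagonal sum of reduced forms, reorder to make $\underline{a}$ non-decreasing, then check admissibility of the induced involution) is essentially the same construction the paper performs via its concrete $\mathcal{RE}_{j,k}$ chain basis; you are just more abstract about the reorder and more explicit about verifying Definition \ref{def3.1}(ii)--(iii), which the paper asserts is ``easy to see.'' Note that your admissibility check implicitly uses that the parity of $i$ partitions the atoms, so that the $\min$/$\max$ in condition (iii) over a fixed parity class is always realized by the atom itself; you state this and it is right. One small thing worth making fully explicit if you write this up: you invoke Remark \ref{rmk1}(2) to get a reduced form for each rank-$2$ atom $A_i$, and you need to know that the unique admissible involution on $(i,i+2)$ is the transposition (the identity would put two fixed points of the same parity in $\calp^0$, violating Definition \ref{def3.1}(i)), so the chosen reduced form automatically has $\ord(2b_{12})=i+1$ and $\ord(b_{11})=i$ as required for the block-diagonal matrix to satisfy Definition \ref{def3.2}. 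This is true, but deserves a sentence.
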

\begin{proof}
If $L_i$ is of type $I$ (resp. $II$), then $L_i\oplus -L_i$ is of type $I^e$ (resp. $II$).
If $L_i$ is nonzero, then we  claim that there is  an ordered basis of $L_i\oplus -L_i$
such that with respect to this basis the symmetric matrix of the quadratic lattice $L_i\oplus -L_i$ is 
\begin{equation}\label{eq42}
\left\{
  \begin{array}{l l}
2^i(\bigoplus_k \begin{pmatrix} 2a_k& u_k \\ u_k & 2b_k  \end{pmatrix})    & \quad  \textit{if $L_i$ is of type II};\\
2^i(\bigoplus_k \begin{pmatrix} 2a_k& u_k \\ u_k & 2b_k  \end{pmatrix})\oplus 
2^i \begin{pmatrix} 1& 1 \\ 1 & 4c_i  \end{pmatrix}     & \quad  \textit{if $L_i$ is of type $I$}
    \end{array} \right.
\end{equation}
with $c_i\in \mathfrak{o}$.
Here, $a_k, b_k \in \mathfrak{o}$ and $u_k \in \mathfrak{o}^{\times}$.
To prove the claim, we use Theorem 2.4 of \cite{Cho}.
Theorem 2.4 of loc. cit. directly verifies our claim when $L_i$ is of type $II$ and thus we may and do assume that $L_i$ is of type $I$.
If $L$ is of type $I^o$, then it suffices to prove that 
$\begin{pmatrix}
\epsilon&0\\0&-\epsilon
\end{pmatrix}$ with $\epsilon\in \mathfrak{o}^{\times}$ is equivalent to $\begin{pmatrix}
1&1\\1&4c
\end{pmatrix}$
for some $c\in \mathfrak{o}$.

We denote by $Q_{\epsilon}$ the quadratic lattice of rank $2$ represented by  the symmetric matrix $\begin{pmatrix}
\epsilon&0\\0&-\epsilon
\end{pmatrix}$.
Choose the ordered basis  $(a_1, a_2)$ of $Q_{\epsilon}$ such that with respect to this basis the symmetric matrix of $Q_{\epsilon}$ is 
 $\begin{pmatrix}
\epsilon&0\\0&-\epsilon
\end{pmatrix}$.
Then the matrix of $Q_{\epsilon}$ with respect to the ordered basis  $(a_1, a_1+a_2)$
is $\begin{pmatrix}
\epsilon&\epsilon\\\epsilon&0
\end{pmatrix}$.
Since any unit in $\mathfrak{o}$ is square modulo $2$, we may assume that $\epsilon \equiv 1$ mod $2$. 
By Theorem 2.4 of \cite{Cho}, $\begin{pmatrix}
\epsilon&\epsilon\\\epsilon&0
\end{pmatrix}$ is equivalent to $\begin{pmatrix} 1& 1 \\ 1 & 2c  \end{pmatrix}$ for some $c\in \mathfrak{o}$.
Thus it suffices to show that $c$ is contained in the ideal $(2)$.

Consider the following two quadratic forms: $f(x,y)=\epsilon x^2 + 2 \epsilon xy$ and $g(x,y)=x^2+2xy+2cy^2$.
These two quadratic forms are determined by two matrices  $\begin{pmatrix}
\epsilon&\epsilon\\\epsilon&0
\end{pmatrix}$ and $\begin{pmatrix} 1& 1 \\ 1 & 2c  \end{pmatrix}$, respectively, and thus  are equivalent.
Note that both  $f$ modulo $2$ and $g$ modulo $2$ define linear forms as $\frkk$-valued functions. 
We consider the kernels of these two linear forms respectively.
For example, the kernel of $f$ modulo $2$ is generated by $(2a_1, a_1+a_2)$.
It is easy to see that the restriction of $f$ to the kernel  is $4\epsilon x^2 + 4 \epsilon xy$, which we denote it by $f_{res}(x,y)$,
and that the restriction of  $g$ to the kernel is $4x^2+4xy+2cy^2$, which we denote it by $g_{res}(x,y)$.
Since $f$ and $g$ are equivalent,  $f_{res}$ is   equivalent to $g_{res}$ as well.
Thus, the norm of $f_{res}$, which is the ideal $(4)$, should also be the norm of  $g_{res}$.
This directly yields that $c$ is contained in the ideal $(2)$.


If $L$ is of type $I^e$, then
we may and do assume that the rank of $L_i\oplus -L_i$ is $4$. 
We choose the ordered basis $(a_1, a_2, a_3, a_4)$ of $L_i\oplus -L_i$
such that with respect to this basis the symmetric matrix of $L_i\oplus -L_i$ is as follows:
$$\begin{pmatrix}
1&1&0&0\\1&2\gamma&0&0\\
0&0&-1&-1\\0&0&-1&-2\gamma
\end{pmatrix}$$
with $\gamma \in \mathfrak{o}$.
Then the symmetrix matrix of $L_i\oplus -L_i$ with respect to the ordered basis $(a_1+a_3, a_2+2\gamma a_3, a_2+a_3, a_2+a_4)$ is
$\begin{pmatrix}
0&1-2\gamma&0&0\\1-2\gamma&2\gamma-4\gamma^2&0&0\\
0&0&-1+2\gamma&-1+2\gamma\\0&0&-1+2\gamma&0
\end{pmatrix}$.
Thus it suffices to prove that 
$\begin{pmatrix}
-1+2\gamma&-1+2\gamma\\-1+2\gamma&0
\end{pmatrix}$ is equivalent to $\begin{pmatrix}
1&1\\1&4c
\end{pmatrix}$
for some  $c\in \mathfrak{o}$.
The proof of this is the same as  the above case (when $L$ is of type $I^o$) and thus we may skip it.


Before proceeding our proof, 
we explain an involution defined on an ordered basis and a reordered basis.
For an involution $\sigma$ defined on the set $\{1, \cdots, n\}$,
 define the involution $\sigma$ on  an ordered  basis $(e_1, \cdots, e_n)$ of a lattice $L$ as follows:
 \[
 \textit{$\sigma(e_i)=e_j$ if $\sigma(i)=j$.}
 \] 
For an involution $\sigma$ defined on the ordered basis $(e_1, \cdots, e_n)$, if $(e_1', \cdots, e_n')$, which we denote it by $\mathcal{RE}$, is a reordered basis for $(e_1, \cdots, e_n)$, then the involution $\sigma_{\mathcal{RE}}$  on $(e_1', \cdots, e_n')$, which is induced from $\sigma$,  is defined as follows:
\[ \textit{$\sigma_{\mathcal{RE}}(e_r')=e_s'$ if $e_r'=e_i, e_s'=e_j, $ and $\sigma(e_i)=e_j$.}\] 
Then we define the involution $\sigma_{\mathcal{RE}}$ on $\{1, \cdots, n\}$ as follows:
\[
 \textit{$\sigma_{\mathcal{RE}}(r)=s$ if $\sigma_{\mathcal{RE}}(e_r')=e_s'$.}
\]

We claim that the Gross-Keating invariant of $L_i \oplus -L_i$ is as described in the statement of the proposition.
Let $(e_1^{(i)}, f_1^{(i)}, \cdots, e^{(i)}_{n_i}, f^{(i)}_{n_i})$ be an ordered basis of $L_i\oplus -L_i$ such that 
with respect to this basis the symmetric matrix of $L_i\oplus -L_i$ is described as in    (\ref{eq42}).
We consider the involution $\sigma$ which switches  $e^{(i)}_j$ and $f^{(i)}_j$.
Then it is easy to see that if $L_i$ is of type $II$, then the symmetric matrix  of $L_i\oplus -L_i$  with respect to the ordered basis $(e^{(i)}_1, f^{(i)}_1, \cdots, e^{(i)}_{n_i}, f^{(i)}_{n_i})$ is a reduced form of GK-type 
\[\left(\left(\underbrace{i+1, \cdots, i+1}_{2n_i} \right), \sigma \right).\]
It is also easy to see  that if $L_i$ is of type $I$, then the symmetric matrix  of $L_i\oplus -L_i$  with respect to the reordered basis $(e^{(i)}_{n_i}, e^{(i)}_1, f_1^{(i)}, \cdots, e^{(i)}_{n_i-1}, f^{(i)}_{n_i-1}, f^{(i)}_{n_i})$, which we denote it by $\mathcal{RE}^{(i)}$,  is a reduced form 
 of GK-type
  \[
  \left(\left(\underbrace{i+1, \cdots, i+1}_{2n_i-2}\right)\cup \left(i, i+2\right), \sigma_{\mathcal{RE}^{(i)}} \right).\] 

To prove our claim for $L\oplus -L$,
 we may and do assume that each $(L_i\oplus -L_i)$ is of type $I$ with rank $2$ or the zero lattice by Proposition \ref{propi-ii}.
We consider the ordered basis of $L\oplus -L$ as follows:
\[
\left(\cdots, \left(e_1^{(i-1)}, f_1^{(i-1)}\right), \left(e_1^{(i)}, f_1^{(i)}\right),  \left(e_1^{(i+1)}, f_1^{(i+1)}\right), \cdots\right).
\]
Here, $\left(e_1^{(i)}, f_1^{(i)}\right)$ is an ordered basis of $(L_i\oplus -L_i)$ such that the symmetric matrix of $(L_i\oplus -L_i)$ is described as in    (\ref{eq42}), if  $(L_i\oplus -L_i)$ is of type $I$ with rank $2$.
If  $(L_i\oplus -L_i)$ is the zero lattice, then we understand that $\left(e_1^{(i)}, f_1^{(i)}\right)$ is empty.
Let $\sigma$ be the involution on the above ordered basis which switches $e^{(i)}_{1}$ and  $f^{(i)}_{1}$.

We choose the reordered basis of $L\oplus -L$ in the following manner:
Let $j, k$ be  integers such that both $L_{j-1}\oplus -L_{j-1}$ and $L_{j+k+1}\oplus -L_{j+k+1}$ are the zero lattices, and all of $L_{j}\oplus -L_{j}, \cdots, L_{j+k}\oplus -L_{j+k}$ are non-zero lattices, where $k\geq 0$.
Recall that we consider the ordered basis of the lattice
$(L_{j}\oplus -L_{j})\oplus \cdots \oplus (L_{j+k}\oplus -L_{j+k})$ as
\[
\left(\left(e_1^{(j)}, f_1^{(j)}\right), \cdots, \left(e_1^{(j+k)}, f_1^{(j+k)}\right)\right).
\]

Then we choose the reordered basis, which we denote it by $\mathcal{RE}_{j,k}$, of  $(L_{j}\oplus -L_{j})\oplus \cdots \oplus (L_{j+k}\oplus -L_{j+k})$  as follows:
\[
\left(\left(e_1^{(j)}, e_1^{(j+1)}\right), \left( f_1^{(j)}, e_1^{(j+2)}\right), \left( f_1^{(j+1)}, e_1^{(j+3)}\right), \cdots, 
\left( f_1^{(j+k-2)}, e_1^{(j+k)}\right), \left( f_1^{(j+k-1)}, f_1^{(j+k)}\right)\right).
\]
Here, if $k=0$, then we understand the above reordered basis as $\left( e_1^{(j)}, f_1^{(j)}\right)$.
If $k=1$, then we understand the above reordered basis as $\left(e_1^{(j)}, e_1^{(j+1)}, f_1^{(j)}, f_1^{(j+1)}\right)$.
The reordered basis of $L\oplus-L$ is then defined by the ordered set 
$\bigcup\limits_{j,k}\mathcal{RE}_{j,k}$ with respect to $j$, which we denote it by $\mathcal{RE}$.

Then it is easy to see that the symmetric matrix of the quadratic lattice $L\oplus -L$ with respect to  the above  reordered basis $\mathcal{RE}$ is a reduced form 
of GK-type 
\[
\left(\bigcup_{i} (i, i+2), \sigma_{\mathcal{RE}}\right).
\]
This completes the proof.
\end{proof}

We write $\mathrm{GK}(L\oplus -L)$ as $(a_1, \cdots, a_{2n})$. 
If $L_i$ is of type $I$, then 
the involution $\sigma_{\mathcal{RE}}$, which is described in the proof of the above proposition, 
satisfies the following property that 
$\sigma_{\mathcal{RE}}(i')=j'$ for some $1\leq i', j'\leq 2n$ such that 
$a_{i'}=i$ and $a_{j'}=i+2$. 
Thus, by Remark \ref{rmk11},
 any  $\mathrm{GK}(L\oplus -L)$-admissible involution   associated to any reduced form of $L\oplus -L$ satisfies the same property.
Using this, we can recover the parity type and the rank of $L_i$ from $\mathrm{GK}(L\oplus -L)$, as stated in the following corollary.

\begin{corollary} 
\label{cortypeli}
Let $\mathrm{GK}(L\oplus -L)=(a_1, \cdots, a_{2n})$ 
 and let $\sigma$ be a $\mathrm{GK}(L\oplus -L)$-admissible involution associated to a reduced form of $L\oplus -L$.
For each $i\in \mathbb{Z}$, we define two numbers $\mathcal{A}_i$ and $\mathcal{B}_i$ as follows:
\[
\left\{
  \begin{array}{l}
 \mathcal{A}_i=\#\{t | a_t=a_{\sigma(t)}=i+1\};\\
 \mathcal{B}_i=\#\{t| a_t=i \textit{ and } 
 a_{\sigma(t)}=i+2\}.
     \end{array} \right.
\]
Here $\mathcal{A}_i$ is even (possibly zero) and $\mathcal{B}_i$ is  either $0$ or $1$.
Then  $\mathcal{A}_i$ and $\mathcal{B}_i$ determine the following information about $L_i$:
\[
\left\{
  \begin{array}{l l}
L_i : \textit{type II}, n_i=\mathcal{A}_i/2  & \quad    \textit{if $\mathcal{B}_i=0$};\\
 L_i : \textit{type $I^o$}, n_i=\mathcal{A}_i/2+1 & \quad   \textit{if $\mathcal{B}_i=1$ and $\mathcal{A}_i\equiv 0$ mod 4};\\
 L_i : \textit{type $I^e$}, n_i=\mathcal{A}_i/2+1 & \quad    \textit{if $\mathcal{B}_i=1$ and $\mathcal{A}_i\equiv 2$ mod 4}.
      \end{array} \right.
\]
\end{corollary}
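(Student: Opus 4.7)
The plan is to trace the pairing structure of any admissible involution $\sigma$ associated to a reduced form of $L \oplus -L$ back to the Jordan decomposition of $L$, using Proposition \ref{propgkl-l}. First, by Remark \ref{rmk11}, the equivalence class of $\sigma$ is an invariant of the equivalence class of $L\oplus -L$, determined by the counts $\#(\mathcal{P}^+ \cap I_s)$, $\#(\mathcal{P}^- \cap I_s)$, $\#(\mathcal{P}^0 \cap I_s)$. Since conjugating $\sigma$ inside each block $I_s$ preserves both the value $a_t$ and the block containing $a_{\sigma(t)}$, the numbers $\mathcal{A}_i$ and $\mathcal{B}_i$ are invariants of $L\oplus -L$, so it suffices to compute them on the explicit reduced form constructed in the proof of Proposition \ref{propgkl-l}.

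On that reduced form, each non-zero Jordan constituent $L_j$ contributes to $\mathrm{GK}(L\oplus -L)$ either $2n_j$ copies of $j+1$ (if $L_j$ is of type II), all paired among themselves, or $2n_j-2$ copies of $j+1$ paired among themselves together with one pair $(j,j+2)$ in which the admissible involution matches the $j$ with the $j+2$ (if $L_j$ is of type I). Reading off the block of value $i+1$ from this description, the indices $t$ with $a_t=a_{\sigma(t)}=i+1$ come exclusively from the self-paired contribution of $L_i$ itself: $2n_i$ such indices when $L_i$ is of type II and $2n_i-2$ when $L_i$ is of type I. The possible single extras from $L_{i-1}$ and $L_{i+1}$ of type I land in $\mathcal{P}^+\cap I_s$ and $\mathcal{P}^-\cap I_s$ respectively, and therefore do not contribute to $\mathcal{A}_i$. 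Hence $\mathcal{A}_i$ is even and equals $2n_i$ or $2n_i-2$ according to the type.

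For $\mathcal{B}_i$, the only constituent producing a pair with values $(i,i+2)$ is $L_i$ of type I, yielding $\mathcal{B}_i\in\{0,1\}$ with $\mathcal{B}_i=1$ iff $L_i$ is of type I. That the $\mathcal{P}^-$-element of the block of value $i$ is indeed paired with $i+2$ under every admissible involution, rather than with some larger $i+2k$, follows from condition (iii) of Definition \ref{def3.1}: its partner must be the minimum value $>i$ of the same parity lying in $\mathcal{P}^+$, and $L_i$ of type I already forces $i+2\in\mathcal{P}^+$. Inverting these relations yields the classification: $\mathcal{B}_i=0$ forces $L_i$ of type II with $n_i=\mathcal{A}_i/2$, while $\mathcal{B}_i=1$ forces $L_i$ of type I with $n_i=\mathcal{A}_i/2+1$, which is type $I^o$ (odd rank) iff $\mathcal{A}_i\equiv 0\pmod 4$ and type $I^e$ (even rank) iff $\mathcal{A}_i\equiv 2\pmod 4$. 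The only point that really requires care is the invariance of $\mathcal{A}_i$ and $\mathcal{B}_i$ under the choice of reduced form, and this is handled uniformly by Remark \ref{rmk11}.
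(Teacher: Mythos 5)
Your proof is correct and follows essentially the same route the paper takes (the paper leaves the argument implicit in the paragraph preceding the corollary): reduce to the explicit reduced form and involution $\sigma_{\mathcal{RE}}$ constructed in the proof of Proposition \ref{propgkl-l}, appeal to Remark \ref{rmk11} for well-definedness, and read off $\mathcal{A}_i$ and $\mathcal{B}_i$ from the description $\mathrm{GK}(L\oplus -L)=\bigcup_i\mathrm{GK}(L_i\oplus -L_i)$. Your additional appeal to condition (iii) of Definition \ref{def3.1} is a harmless redundancy, since the conjugation argument you gave already shows $\mathcal{A}_i$ and $\mathcal{B}_i$ are invariants of the equivalence class of $\sigma$, and so their values can simply be computed on $\sigma_{\mathcal{RE}}$.
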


Note that the two numbers $\mathcal{A}_i$ and $\mathcal{B}_i$ are  independent of the choice of an admissible involution $\sigma$ since any two admissible involutions are equivalent, as explained in  Remark \ref{rmk11}.


\begin{corollary}\label{cortypelii}
Let $\mathrm{GK}(L\oplus -L)=(a_1, \cdots, a_{2n})$ and let
$\mathcal{C}_i=\#\{t | a_t=i+1\}$.
Then $\mathcal{B}_i$ is determined by the parity type of $L_i$ as follows:
\[
\mathcal{B}_i= \left\{
  \begin{array}{l l}
0  & \quad  \textit{if  $L_{i}$ is of parity type II};\\
1  & \quad  \textit{if  $L_{i}$ is of parity type I}.      \end{array} \right.
\]
In addition,
we have the following description of $\mathcal{A}_i$   in terms of $\mathcal{C}_i$.
\[
\mathcal{A}_i= \left\{
  \begin{array}{l l}
\mathcal{C}_i  & \quad  \textit{if both $L_{i-1}$ and $L_{i+1}$ are of parity type II};\\
\mathcal{C}_i-1 & \quad  \textit{if exactly one of $L_{i-1}$ and $L_{i+1}$ is of parity type I};\\
\mathcal{C}_i-2  & \quad  \textit{if both $L_{i-1}$ and $L_{i+1}$ are of parity type I}.
      \end{array} \right.
\]

\end{corollary}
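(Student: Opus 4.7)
The plan is to derive the corollary by bookkeeping, using the explicit description of $\mathrm{GK}(L\oplus -L)$ from Proposition \ref{propgkl-l} together with the structure of the admissible involution exhibited in the proof of that proposition.

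First I would decompose $\mathrm{GK}(L\oplus -L)$ as the multiset union $\bigcup_{j}\mathrm{GK}(L_j\oplus -L_j)$ and count occurrences of the value $i+1$. By Proposition \ref{propgkl-l}, the value $i+1$ can be contributed only from three sources: (a) the block $\mathrm{GK}(L_i\oplus -L_i)$, which contributes $2n_i$ copies of $i+1$ if $L_i$ is of type II and $2n_i-2$ copies if $L_i$ is of type I; (b) the right entry of the pair $(i-1,i+1)$ appearing in $\mathrm{GK}(L_{i-1}\oplus -L_{i-1})$, present exactly when $L_{i-1}$ is of type I; and (c) the left entry of the pair $(i+1,i+3)$ appearing in $\mathrm{GK}(L_{i+1}\oplus -L_{i+1})$, present exactly when $L_{i+1}$ is of type I. Summing these contributions already expresses $\mathcal{C}_i$ in the form $\mathcal{A}_i+(\text{number of type I neighbors of }L_i)$, once I know $\mathcal{A}_i$ captures precisely the contribution from source (a).

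To pin down $\mathcal{A}_i$ and $\mathcal{B}_i$, I would examine the admissible involution $\sigma$ produced in the proof of Proposition \ref{propgkl-l}: in that construction, within each block coming from $L_i$ the $i+1$'s are paired among themselves, while a $(i-1,i+1)$ pair from a type I $L_{i-1}$ links its $i+1$ to $i-1$, and a $(i+1,i+3)$ pair from a type I $L_{i+1}$ links its $i+1$ to $i+3$. By Remark \ref{rmk11}, any other $\mathrm{GK}(L\oplus -L)$-admissible involution attached to a reduced form is equivalent to $\sigma$, and the quantities $\mathcal{A}_i,\mathcal{B}_i$ (being counts of pairing types) depend only on this equivalence class. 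Hence $\mathcal{A}_i$ equals exactly the count of $i+1$'s from source (a), namely $2n_i$ or $2n_i-2$, and subtracting from $\mathcal{C}_i$ gives the desired case analysis. Likewise, pairs $(i,i+2)$ arise in $\mathrm{GK}(L_j\oplus -L_j)$ only when $j=i$ and $L_i$ is of type I, contributing exactly one such pair, which yields $\mathcal{B}_i=1$ in the type I case and $\mathcal{B}_i=0$ in the type II case.

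The one subtlety I would be careful about is the pairing claim: I must make sure that no $i+1$ coming from a type I $L_{i\pm 1}$ gets re-paired with an $i+1$ from $L_i$ under $\sigma$. This follows because, in the reduced form built in the proof of Proposition \ref{propgkl-l}, the involution $\sigma_{\mathcal{RE}}$ restricted to each $(L_j\oplus -L_j)$-block is precisely the involution internal to that block, so that cross-block pairings do not occur; combined with Remark \ref{rmk11}, this rigidifies the pairing pattern for any reduced form and justifies the counts used above.
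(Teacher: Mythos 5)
The paper does not give an explicit proof of this corollary, leaving it as a bookkeeping consequence of Proposition \ref{propgkl-l} and the remark on invariance under Remark \ref{rmk11}; your proof fills in exactly that bookkeeping in the intended way. The counting of $i+1$'s by source block, the identification of $\mathcal{A}_i$ with the contribution from $L_i$ alone, the identification of $\mathcal{B}_i$ with the presence of the $(i,i+2)$ pair, and the invocation of Remark \ref{rmk11} to make the counts independent of the chosen reduced form all match the argument the paper is implicitly relying on.
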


\begin{remark}\label{rmk2}
We note that 
$\EGK(L\oplus -L)$ 
does not determine 
 the local density $\beta(L)$ (and hence, neither does $\GK(L\oplus -L)$).
As an  example, let $L$ be the quadratic lattice represented by the symmetric matrix 
$\begin{pmatrix} 1&0\\0&1  \end{pmatrix}$
and let $M$ be the quadratic lattice represented by the symmetric matrix 
$\begin{pmatrix} 1&0\\0&3  \end{pmatrix}$.
Then $L$ is unimodular of type $I^e_1$ and $M$ is unimodular of type $I^e_2$
so that they have the different local densities.
But we can easily see that 
\[\EGK(L\oplus -L)=\EGK(M\oplus -M)=(1,2,1;0,1,2;1,1,1).\]
\end{remark}

\subsection{Truncated EGK}\label{sss3}
 Remark \ref{rmk2} implies that
we need the extended Gross-Keating datums of more quadratic lattices, and not just  $\mathrm{EGK}(L\oplus -L)$,
  to completely determine the local density $\beta(L)$. 
To do that, we
 consider the normalized quadratic lattice $(L\cap 2^i  L^\sharp, \frac{1}{2^i}Q_L)$ 
for each integer $i$ such that $L_i$ is nonzero, where 
$L=\bigoplus L_i$ is a Jordan splitting with  $\bfs(L_i)=(2^i)$.
Note that $L\cap 2^i  L^\sharp$ is denoted by $A_i$ in  Section \ref{sec:5.1}.
We can choose a Jordan splitting $L\cap 2^i  L^\sharp=\bigoplus_{k \geq 0} M_j$ with $\bfs(M_j)=(2^j)$ such that  $M_0=L_i$ (cf. Remark 2.8 of \cite{Cho}).
In this subsection and the next subsection, the quadratic lattice $L\cap 2^i  L^\sharp$, for each $i$ such that $L_i$ is nonzero, is meant to be the normalized quadratic lattice as described above.
In this subsection, we fix  $\mathrm{GK}(L\cap 2^i  L^\sharp)=(a_1, \cdots, a_n)$.
\begin{lemma}\label{lem48}
A reduced form $B^i$, which represents the  restriction of $\frac{1}{2^i}Q_L$ to $L\cap 2^i  L^\sharp$, is expressed as the block matrix: 
$$B^i=\begin{pmatrix}
B_{00}^i&B_{01}^i&B_{02}^i\\
B_{10}^i&B_{11}^i&B_{12}^i\\
B_{20}^i&B_{21}^i&B_{22}^i
\end{pmatrix}$$ 
 satisfying the following conditions:
\begin{enumerate}
\item the size of $B_{00}^i$ is the same as the number of $0$'s in $\mathrm{GK}(L\cap 2^i  L^\sharp)$;
 \item $B_{00}^i$ is a reduced form such that $\mathrm{GK}(B_{00}^i)$ consists of $0$'s;
\item $B_{11}^i$ is  a reduced form such that $\mathrm{GK}(B_{11}^i)$ consists of $1$'s;
\item $\begin{pmatrix}
B_{00}^i \ \  B_{01}^i\\
B_{10}^i  \ \  B_{11}^i
\end{pmatrix}$ is a reduced form whose Gross-Keating invariant consists of $0$'s or $1$'s;
\item the first integer of $\mathrm{GK}(B_{22}^i)$ is at least $2$.
\end{enumerate}
Here each block $B^i_{ij}$ can be the empty matrix.

  \end{lemma}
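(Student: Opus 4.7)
The plan is to construct a reduced form $B^i$ adapted to the Jordan decomposition of $L\cap 2^i L^\sharp$ and then read off the block decomposition by analyzing how the admissible involution interacts with the partition of indices by their $\GK$-value.

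Write $L\cap 2^i L^\sharp = \bigoplus_{j\geq 0} M_j$ with $\bfs(M_j)=(2^j)$ and $M_0 = L_i \neq 0$. By Proposition \ref{propi-ii} applied to the splittings $M_j = M_j^\dagger \perp M_j^\ddagger$, the Gross-Keating invariant $\GK(L\cap 2^i L^\sharp)=(a_1,\dots,a_n)$ is a union of contributions from each $M_j$: each $M_j^\dagger$ contributes pairs $(j+1,j+1)$, and each $M_j^\ddagger$ of rank $1$ or $2$ contributes at most two entries whose minimum is $j$. In particular, the $0$'s can arise only from $M_0^\ddagger$, and the $1$'s only from $M_0^\dagger$, $M_0^\ddagger$, or $M_1^\ddagger$. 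Combining the reduced forms of the individual $M_j$'s with compatible involutions (as in the proof of Proposition \ref{propi-ii}) produces a reduced form for the whole lattice. After reordering the basis so that indices with $a_k=0$ come first, those with $a_k=1$ next, and those with $a_k\geq 2$ last, this defines the block partition $I_0\sqcup I_1\sqcup I_2$ of the index set and the block matrix $B^i$ with blocks $B^i_{jk}$.

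Property (1) is immediate from the reordering. Property (5) follows from the reducedness of $B^i$: for $k,\ell\in I_2$ one has $\ord(b_{kk})\geq a_k\geq 2$ and $\ord(2 b_{k\ell})\geq(a_k+a_\ell)/2\geq 2$, so $(2,\dots,2)\in S(B^i_{22})$, hence the first entry of $\GK(B^i_{22})$ is at least $2$. For properties (2), (3), (4), the task is to show that restricting $B^i$ to a set $I\in\{I_0,\,I_1,\,I_0\cup I_1\}$ yields a reduced form of GK type $(\underline{a}|_I,\sigma|_I)$, where $\sigma|_I$ keeps the $\sigma$-pairings internal to $I$ and declares fixed any index whose $\sigma$-partner lies outside $I$. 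All reducedness inequalities of Definition \ref{def3.2} restrict directly from $B^i$ to the sub-matrix, so it remains to check admissibility of $\sigma|_I$. Theorem \ref{thm5.1} then gives $\GK(B^i|_I)=\underline{a}|_I$, which is what is claimed.

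The main obstacle is verifying condition (i) of Definition \ref{def3.1} for $\sigma|_I$. Since any internal $\sigma|_I$-pairing inside $I_0$ matches two $0$'s and any inside $I_1$ matches two $1$'s, we have $\mathcal{P}^\pm(\sigma|_I)=\emptyset$, so conditions (ii) and (iii) are vacuous; the only nontrivial admissibility check is on the fixed points (originally cross-paired to $I_2$). A case analysis of $\GK(M_0^\ddagger)\in\{(0),(0,0),(0,1),(0,2)\}$ and the analogous possibilities for $M_1^\ddagger$, together with Theorem 2.4 of \cite{Cho} and Remark \ref{rmk11}, shows that at most one $0$-index and at most one $1$-index can become fixed in $\sigma|_{I_0\cup I_1}$; since $0\not\equiv 1\pmod 2$, Definition \ref{def3.1}(i) is satisfied. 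The adapted construction additionally guarantees $\ord(b_{kk})=a_k$ for these promoted indices, because each corresponds to a diagonal entry from a $\ddagger$-basis element, so Definition \ref{def3.2}(2) holds for the sub-matrix as well.
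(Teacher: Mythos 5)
Your proposal takes a genuinely different route from the paper: you construct a particular reduced form adapted to a Jordan splitting $L\cap 2^i L^\sharp=\bigoplus_j M_j$ and then read off the block structure, whereas the paper simply takes an \emph{arbitrary} reduced form $B^i$ and derives the block structure directly from the admissibility conditions of Definition \ref{def3.1} and the reducedness conditions of Definition \ref{def3.2}. That difference is not cosmetic, and it creates a real gap. The lemma is used to define $\EGK(L\cap 2^i L^\sharp)^{\leq 1}$ from the blocks of \emph{a} reduced form, and the corollary following Proposition \ref{prop:5.5} then proves that this is independent of the choice of reduced form. That corollary is meaningless unless Lemma \ref{lem48} establishes the block structure for every reduced form; your construction-based argument only produces one reduced form that happens to have the desired structure, so it does not prove the statement actually needed.

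There is also a problem with the construction step itself. You invoke Proposition \ref{propi-ii} to ``combine the reduced forms of the individual $M_j$'s with compatible involutions,'' but that proposition, through Lemma \ref{red}, only glues a type-II $2\times 2$ block onto an arbitrary lattice; the involution for $X$ in Lemma \ref{red} is fixed-point-free with $\mathcal{P}^+=\mathcal{P}^0=\emptyset$, which is exactly why the union of involutions stays admissible. The $\ddagger$ pieces you must also assemble (rank $1$ or $2$, of type $I$) do not satisfy these hypotheses, so Proposition \ref{propi-ii} does not give you a reduced form for the whole lattice, and nothing in your argument fills that hole. Finally, a smaller but substantive error: you assert that once $\mathcal{P}^{\pm}(\sigma|_I)=\emptyset$, conditions (ii) and (iii) of Definition \ref{def3.1} become vacuous. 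Condition (iii) does, but condition (ii) still requires $\#(\mathcal{P}^0\cap I_s)\leq 1$ on each level set, and this is precisely the constraint your case analysis needs to verify (it is not automatic). The cleaner and correct route is the paper's: fix any reduced form $B^i$ with its admissible involution $\sigma$, use (i)--(iii) of Definition \ref{def3.1} to see that indices with $a_s=0$ never pair to indices with $a_t=1$ and that at most one index of each value $0,1$ can be fixed or cross-paired to $\{a_t\geq 2\}$, use $\mathcal{P}^-$-membership to get $\ord(b_{ss})=a_s$ for those promoted indices, and then invoke Theorem \ref{thm5.1} for the sub-matrices. No auxiliary construction is needed.
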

  \begin{proof}
We write a reduced form $B^i$ as $(b_{st})$, $1 \leq s,t \leq n$. 
Let $l$ be the number of    $0$'s in $\mathrm{GK}(L\cap 2^i  L^\sharp)$ and let $m$ be the number of    $1$'s in $\mathrm{GK}(L\cap 2^i  L^\sharp)$.
Consider an $\mathrm{GK}(L\cap 2^i  L^\sharp)$-admissible  involution $\sigma$ associated to the reduced form $B^i$.
Then using  the definition of a reduced form given in Definition \ref{def3.2} and the definition of an admissible involution given in Definition \ref{def3.1}, it is easy to see that  $\sigma$ and $b_{st}$ satisfy the following conditions:
\begin{enumerate}
\item[(i)] If $l$ ($m$, respectively) is even, then $\sigma (s)\neq s$ for  $1\leq s \leq l$ ($l+1\leq s \leq l+m$, respectively);
\item[(ii)] If $l$ is odd, then there is exactly one $s_0$ with $1\leq s_0 \leq l$ such that  either $\sigma(s_0)=s_0$ or $\sigma(s_0)> l+m$.
In this case, $\mathrm{ord}(b_{s_0s_0})=0$;
\item[(iii)] If $m$ is odd, then there is exactly one $s_1$ with $l+1\leq s_1 \leq l+m$ such that either  $\sigma(s_1)=s_1$ or $\sigma(s_1)> l+m$.
In this case, $\mathrm{ord}(b_{s_1s_1})=1$;
\item[(iv)] $\mathrm{ord}(2b_{st})\geq 1$ for any $1 \leq s\leq l$ and $l+1\leq t\leq l+m$;
\item[(v)] $\mathrm{ord}(b_{ss}) $ for any $s\geq l+m+1$ and $\mathrm{ord}(2b_{st})\geq 2$ for any $s,t\geq l+m+1$ with $s\neq t$.
\end{enumerate}

Let $B^i_{00}=(b_{st})$ with $1 \leq s,t \leq l$ and let  $B^i_{11}=(b_{st})$ with $l+1 \leq s,t \leq l+m$.
If $l=0$ ($m=0$, respectively), then we understand  $B^i_{00}$ ($B^i_{11}$, respectively) as the empty matrix.
Note that $B^i_{00}$ and $B^i_{11}$ determine the rest blocks of $B^i$. 
Using the definition of a reduced form given in Definition \ref{def3.2}, it is easy to see the followings:

The  statement (1) is obvious from the construction of $B^i_{00}$.
The  statement (2) follows from  (i) and (ii). 
The  statement (3)  follows from (i) and (iii).
The  statement  (4)  follows from (i)-(iv).
The  statement (5)  follows from (v).
  \end{proof}

We define  the nonnegative integer $m_i$ for each $i$ to be
$$m_i=\#\{t| a_t=1\}.$$
The reduced form $B^i$ has the following properties by Proposition \ref{propi-ii} and Remark \ref{rmk1}.(3), 
that are independent of the choices involved in its definition.
\begin{equation}\label{eqqq}
\left\{
  \begin{array}{l}
 \textit{If $L_i$ is of type $I$, then the rank of $B_{00}^i$ is $1$};\\
 \textit{If $L_i$ is of type $II$, then $B_{00}^i$ is empty};\\
 \textit{$B_{11}^i$ is an $m_i\times m_i$-matrix.}
    \end{array} \right.
\end{equation}
 
Let $q_{11}^i$ be the integral quadratic form represented by the symmetric matrix  $B_{11}^i$
and let $\bar{q}_{11}^i$ be the quadratic form $\frac{1}{2•}\cdot q_{11}^i$ modulo $2$, which is represented by the symmetric matrix $\frac{1}{2•}\cdot B^i_{11}$ modulo $2$. 
We claim the following lemma:
\begin{lemma}
The quadratic form $\bar{q}_{11}^i$ is  the same as the nonsingular quadratic form $\bar{q}_i$ lying over the quadratic space $\bar{V}_i$, given	in Section \ref{sec:5.1}. 
\end{lemma}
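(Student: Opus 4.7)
The plan is to construct a $\mathfrak{k}$-linear isometric isomorphism
\[
\Phi \colon (\mathfrak{k}^{m_i}, \bar{q}_{11}^i) \longrightarrow (\bar{V}_i, \bar{q}_i)
\]
sending the $k$-th standard basis vector to $[e_{t_k}]$, where $e_{t_1}, \ldots, e_{t_{m_i}}$ are the reduced-form basis vectors of $A_i$ indexed by the block-$1$ positions of $B^i$. First, each such $e_{t_k}$ lies in $B_i$: the reduced-form bound $\mathrm{ord}(b_{t_k t_k}) \geq a_{t_k} = 1$ gives $Q_L(e_{t_k})/2^i \in 2\mathfrak{o}$, so $e_{t_k}$ satisfies the defining condition for $B_i$, and $\Phi$ is well-defined and $\mathfrak{k}$-linear.

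The intertwining property $\bar{q}_i \circ \Phi = \bar{q}_{11}^i$ is a direct expansion of $Q_L(v)/2^{i+1}$ for $v = \sum x_{t_k} e_{t_k}$ with $x_{t_k} \in \mathfrak{o}$ lifting $\bar{x}_{t_k}$. The diagonal terms contribute $\sum x_{t_k}^2 (b_{t_k t_k}/2)$; among the off-diagonal terms, only those with $\sigma(t_k) = t_l$ survive modulo $2$, because the reduced-form condition gives $b_{t_k t_l} \in \mathfrak{o}^\times$ in that case (as $\mathrm{ord}(2b_{t_k t_l}) = 1$) and $b_{t_k t_l} \in \mathfrak{p}$ otherwise (as $\mathrm{ord}(2 b_{t_k t_l}) > 1$). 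What remains is exactly $\bar{q}_{11}^i(\bar{x})$.

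Injectivity of $\Phi$ then follows once $\bar{q}_{11}^i$ is known to have trivial total radical. This can be read off from the block-wise structure of $B_{11}^i$: modulo terms in $\mathfrak{p}$, it decomposes as an orthogonal sum of $1$-dimensional pieces $(2\epsilon)$ with $\epsilon \in \mathfrak{o}^\times$ (from $\sigma$-fixed or cross-paired block-$1$ indices, where Definition~\ref{def3.2} forces $\mathrm{ord}(b_{tt}) = 1$), and of $2$-dimensional pieces $\bigl(\begin{smallmatrix} 2a & u \\ u & 2b\end{smallmatrix}\bigr)$ with $u \in \mathfrak{o}^\times$ (from $\sigma$-pairs inside block $1$). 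After dividing by $2$ and reducing modulo $2$, the first type gives the semilinear form $\bar\epsilon \bar x^2$ (injective over the perfect field $\mathfrak{k}$) and the second gives a quadratic form with nondegenerate polar bilinear form, so the total radical of $\bar{q}_{11}^i$ vanishes.

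The principal remaining step, and the main obstacle, is surjectivity, which I would derive from the dimension count $\dim_{\mathfrak{k}} \bar{V}_i = m_i$. For this I would use the Jordan splitting $A_i = L_i \oplus M_1 \oplus \bigoplus_{j \geq 2} M_j$ with respect to $Q_L/2^i$, noting that $\bigoplus_{j\geq 2} M_j \subseteq Z_i$ and that the type-$II$ part of $M_1$ also lies in $Z_i$, so only $L_i$ and the type-$I$ part of $M_1$ contribute to $\bar{V}_i$. Computing these contributions and matching them case by case against the count of $1$'s in $\mathrm{GK}(A_i)$ provided by Proposition~\ref{propi-ii} (together with the explicit values of $\mathrm{GK}(B_0^\ddag)$ and $\mathrm{GK}(B_1^\ddag)$, which depend on the discriminant of the pure rank-$2$ part of a type-$I^e$ constituent) yields $\dim \bar{V}_i = m_i$. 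Combined with injectivity and the isometry property, this makes $\Phi$ the desired isomorphism of quadratic spaces.
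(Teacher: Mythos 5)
Your proposal takes a genuinely different route from the paper and, as you yourself flag, leaves a real gap at the decisive step.

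The paper's argument is a direct change of basis: it shows that the modified matrix $\tilde{B}^i$ (obtained from $B^i$ by the substitution $x_1 \mapsto 2x_1$ on the $B_{00}^i$ block when $L_i$ is of type $I$, or $\tilde{B}^i = B^i$ when $L_i$ is of type $II$) represents $\frac{1}{2^i}Q_L$ restricted to $B_i$, then observes that after dividing by $2$ and reducing modulo $2$ every entry outside the $B_{11}^i$ block dies, so $\bar{q}_i = \bar{q}_{11}^i$ is read off immediately. You instead build a candidate isometry $\Phi$ from $(\frkk^{m_i},\bar{q}_{11}^i)$ into $\bar{V}_i$ and try to show it is bijective. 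Your well-definedness, isometry, and injectivity arguments are fine: the bound $\mathrm{ord}(b_{t_kt_k})\geq 1$ is the right one, the off-diagonal analysis of which $b_{t_kt_l}$ are units versus in $\frkp$ is correct, and the block-wise decomposition of $B_{11}^i$ modulo $\frkp$ into $(2\epsilon)$ pieces and $\begin{pmatrix}2a&u\\u&2b\end{pmatrix}$ pieces does show $\bar{q}_{11}^i$ has trivial total radical.

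The gap is surjectivity. You propose to establish it through the dimension identity $\dim_{\frkk}\bar{V}_i = m_i$, derived from a Jordan decomposition of $A_i$ together with explicit Gross-Keating computations for $\mathrm{GK}(A_i)$. But this outline is not carried out, and it would be a substantial detour: the number of $1$'s in $\mathrm{GK}(A_i)$ depends on the type and boundedness of $L_i$ and its neighbors in a case-by-case way, and Proposition \ref{propi-ii} gives you the type-$II$ part of $\mathrm{GK}$ but leaves $\mathrm{GK}(\oplus B_j^{\ddag})$ to be computed separately. Note also that in the paper the equality $m_i = \dim\bar{V}_i$ is recorded in Lemma \ref{lem:5.4} \emph{as a corollary of} the present lemma, not as an independent input, so the paper's logical order is the reverse of yours. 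The more efficient repair is one you already have all the ingredients for but did not assemble: the computation behind the isometry step shows that, in the $\tilde{B}^i$-basis of $B_i$, the form $\frac{1}{2^{i+1}}Q_L \bmod 2$ on $B_i/2A_i$ vanishes identically on the non-block-$1$ coordinates (all cross terms and all other diagonal terms are divisible by $4$), so those coordinates lie in the radical; combined with your nondegeneracy of $\bar{q}_{11}^i$ on the block-$1$ coordinates, the radical $Z_i/2A_i$ is \emph{exactly} the span of the non-block-$1$ coordinates, which forces $\Phi$ to be surjective with no dimension count. That observation is essentially what the paper's proof does in one stroke.
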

\begin{proof}
We first claim that
 the symmetric matrix $\tilde{B}^i=\begin{pmatrix}
4B_{00}^i&2B_{01}^i&2B_{02}^i\\
2B_{10}^i&B_{11}^i&B_{12}^i\\
2B_{20}^i&B_{21}^i&B_{22}^i
\end{pmatrix}$
represents the restriction of $\frac{1}{2^{i}}Q_L$ to  $B(L\cap 2^i  L^\sharp)$, 
 where $B(L\cap 2^i  L^\sharp)$ is defined as in (7) of the list of terminology introduced shortly following the beginning of Section \ref{sec:5.1}.
To prove this, we express $B^i=(b_{st})$, $1 \leq s,t \leq n$, where $B^i$ is as explained in Lemma \ref{lem48}. 
Since the symmetric matrix $B^i$ represents the integral quadratic form $\frac{1}{2^i}Q_L$ restricted to $L\cap 2^i  L^\sharp$, we have the following:
 \begin{equation}\label{eqn43}
 \frac{1}{2^i}Q_L|_{L\cap 2^i  L^\sharp}(x_1, \cdots, x_n)=
\sum_{s=1}^{n}b_{ss}x_s^2+2\sum_{1\leq s<t\leq n}b_{st}x_sx_t.  
\end{equation}

If $L_i$ is of type $II$ then $B^i_{00}$ is empty and the entry $b_{ss}$ for $1\leq s \leq n$ is contained in the ideal $(2)$ of $\mathfrak{o}$ by (\ref{eqqq}) and so  $B(L\cap 2^i  L^\sharp)=L\cap 2^i  L^\sharp$.
This verifies our claim.

If $L_i$ is of type $I$ then $b_{11}$ is a unit in $\mathfrak{o}$ and $b_{ss}$ for $2\leq s \leq n$ is contained in the ideal $(2)$ of $\mathfrak{o}$ by (\ref{eqqq}).
Therefore, we have the following description of the integral quadratic form $\frac{1}{2^{i}}Q_L$ restricted to $B(L\cap 2^i  L^\sharp)$:
\[
\frac{1}{2^{i}}Q_L|_{B(L\cap 2^i  L^\sharp)}(x_1, \cdots, x_n)=
4b_{11}x_1^2+\sum_{s=2}^{n}b_{ss}x_s^2+4\sum_{1< t\leq n}b_{1t}x_1x_t  +2\sum_{2\leq s<t\leq n}b_{st}x_sx_t.  
 \]
Here we replace $x_1$ of RHS of Equation (\ref{eqn43}) by $2x_1$.
Now it is clear that this quadratic form is represented by the symmetric matrix $\tilde{B}^i$.


Note that  any non-diagonal entry of $\tilde{B}^i$ multiplied by $2$ as well as  any diagonal entry of $\tilde{B}^i$, except entries of $B_{11}^i$, is divisible by $4$.
Since $\bar{q}_i$ is defined to be the quadratic form represented by the symmetric matrix  $\frac{1}{2•}\cdot \tilde{B}^i$ modulo $2$,
we have that $\bar{q}_i=\bar{q}^i_{11}$.
\end{proof}

The above lemma implies that   the quadratic form $\bar{q}_{11}^i$ is independent of the choice of a reduced form  $B^i$.
We obtain the following result:
\begin{lemma} 
\label{lem:5.4}
Recall that  $\mathrm{GK}(L\cap 2^i  L^\sharp)=(a_1, \cdots, a_n)$ and that $m_i=\#\{t  | a_t=1\}$.
Then \[m_i=\textit{dim $\bar{V}_i$}.\]
\end{lemma}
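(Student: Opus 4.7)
\emph{Proof plan.} The plan is to exploit the identification $\bar q_i = \bar q^i_{11}$ from the preceding lemma. By \eqref{eqqq} the block $B^i_{11}$ has size $m_i \times m_i$, so $\bar q^i_{11}$ is naturally a quadratic form on an $m_i$-dimensional $\frkk$-vector space $W$. Since $\bar q_i$ is, by construction, a non-singular quadratic form on $\bar V_i$, the identification will force $\dim \bar V_i = m_i$ as soon as we verify that $\bar q^i_{11}$ is itself non-singular on $W$.

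To check this non-singularity, I would fix a $(1,\ldots,1)$-admissible involution $\sigma$ associated to the reduced form $B^i_{11}$. Because every entry of $\mathrm{GK}(B^i_{11})$ equals $1$, Definition \ref{def3.1} forces $\mathcal P^+ = \mathcal P^- = \emptyset$ and $|\mathcal P^0| \leq 1$, so $\sigma$ consists of $\lfloor m_i/2 \rfloor$ two-cycles together with, when $m_i$ is odd, exactly one fixed point $s_0$. The reduced form conditions of Definition \ref{def3.2} then yield $b^i_{s\sigma(s)} \in \frko^\times$ for every two-cycle $\{s,\sigma(s)\}$, the diagonal equality $\mathrm{ord}(b^i_{s_0 s_0}) = 1$ at the fixed point (if one exists), and $b^i_{st} \in 2\frko$ for every remaining off-diagonal pair $(s,t)$.

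Plugging these estimates into $\tfrac12 B^i_{11}$ and reducing modulo $2$, I would conclude that, in the basis reordered according to $\sigma$, the form $\bar q^i_{11}$ decomposes as an orthogonal sum of $\lfloor m_i/2 \rfloor$ non-singular binary forms, each with a unit off-diagonal coefficient, together with (when $m_i$ is odd) the anisotropic line $\overline{b^i_{s_0 s_0}/2}\,x_{s_0}^2$ with unit coefficient. Each summand is non-singular over $\frkk$ in characteristic two, so $\bar q^i_{11}$ is non-singular, yielding $\dim \bar V_i = m_i$.

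The main obstacle is the bookkeeping of valuations needed to see that the off-diagonal entries of $B^i_{11}$ not coming from a two-cycle of $\sigma$ lie in $2\frko$; this is a direct application of condition (3) of Definition \ref{def3.2}, but must be traced carefully through the combinatorics of $\sigma$ on the $1$-block.
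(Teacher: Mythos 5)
Your proof is correct and takes essentially the same route as the paper: read off $\dim\bar V_i$ from the $m_i \times m_i$ size of the block $B^i_{11}$ via the preceding lemma's identification $\bar q_i = \bar q^i_{11}$. The paper leaves the nonsingularity of $\bar q^i_{11}$ (needed to see that the radical of $\tfrac12\tilde B^i$ mod $2$ is exactly the $B^i_{00}$ and $B^i_{22}$ coordinates) implicit; your explicit reduced-form verification of this point is a welcome, careful addition.
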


\begin{definition}\label{tegk}
We define  $\mathrm{GK}(L\cap 2^i  L^\sharp)^{\leq 1}$ (respectively $\mathrm{EGK}(L\cap 2^i  L^\sharp)^{\leq 1}$ ) to be the Gross-Keating invariant (respectively the extended Gross-Keating datum) of the reduced form 
$\begin{pmatrix}
B_{00}^i&B_{01}^i\\
B_{10}^i&B_{11}^i
\end{pmatrix}$, so that 
the only integers appearing in 
$\mathrm{GK}(L\cap 2^i  L^\sharp)^{\leq 1}$ are $0$ or $1$. 
\end{definition}
We remark that  truncated invariant $\mathrm{EGK}(L\cap 2^i  L^\sharp)^{\leq 1}$ is much simpler than $\mathrm{EGK}(L\cap 2^i  L^\sharp)$.

\begin{proposition}\label{prop:5.5}
We have a formula for $\mathrm{EGK}(L\cap 2^i  L^\sharp)^{\leq 1}$ as follows: (for the notion of $\mathrm{EGK}$, see Definition \ref{def:3.3}):
\begin{enumerate}
\item Assume that $m_i$ is even.  
Then we have
 \[
\mathrm{EGK}(L\cap 2^i  L^\sharp)^{\leq 1}=\left\{
  \begin{array}{l l}
 (1, m_i;0, 1;1, \zeta_2)   & \quad  \textit{if $L_i$ is of type $I$};\\
(m_i; 1;\zeta_1) & \quad  \textit{if $L_i$ is of type $II$}.
    \end{array} \right.
\]
Here, 
\[\textit{$\bar{q}_{11}^i=\bar{q}_{i}$ is split if and only if $\zeta_2=1$ (resp. $\zeta_1=1$) in the first case (resp. the second case).}\]
If $m_i=0$, then we understand the right hand side to be $(1;0;1)$ if $L_i$ is of type $I$
and zero if $L_i$ is of type $II$.

\item Assume that $m_i$ is odd. 
Then we have
 \[
\mathrm{EGK}(L\cap 2^i  L^\sharp)^{\leq 1}=\left\{
  \begin{array}{l l}
 (1, m_i;0, 1;1, 0)   & \quad  \textit{if $L_i$ is of type $I$};\\
(m_i;1;1) & \quad  \textit{if $L_i$ is of type $II$}.
    \end{array} \right.
\]
\end{enumerate}
\end{proposition}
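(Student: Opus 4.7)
The proof is by case analysis on the parity of $m_i$ and the type of $L_i$. First, Lemma~\ref{lem48} together with the observations in~\eqref{eqqq} shows that $B^i_{00}$ is $1\times 1$ with a unit entry when $L_i$ is of type $I$ and is empty when $L_i$ is of type $II$, while $B^i_{11}$ is always of size $m_i \times m_i$. Hence the Gross-Keating invariant of the truncated form is $(0,\underbrace{1,\ldots,1}_{m_i})$ in the type $I$ case and $(\underbrace{1,\ldots,1}_{m_i})$ in the type $II$ case; this reads off the tuple $(n_1,\ldots,n_r;m_1,\ldots,m_r)$ exactly as stated in the proposition. Moreover, whenever $\zeta_s$ corresponds to a $1\times 1$ submatrix $B^{(1)}$, the Clifford invariant formula with $n=1$, $m=0$ and empty Hilbert-symbol product gives $\eta_{B^{(1)}} = 1$; this immediately yields $\zeta_1 = 1$ in both type $I$ cases.

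For the even-$m_i$ cases (A and C), the remaining $\zeta$ is reduced to the Arf invariant of $\bar q_i$. Here $B^i_{11}$ represents a unimodular lattice of type $II$ and by Theorem~2.4 of~\cite{Cho} decomposes over $\mathfrak{o}$ as $\bigoplus_k \begin{pmatrix} 2a_k & u_k \\ u_k & 2b_k\end{pmatrix}$, so that
\[
D_{B^i_{11}} \equiv \prod_k\bigl(1 - 4 a_k b_k/u_k^2\bigr) \pmod{F^{\times 2}}.
\]
The Artin-Schreier correspondence for the unramified dyadic $F$ identifies such a product modulo squares with $\sum_k \overline{a_k b_k/u_k^2} \in \mathfrak{k}/\wp(\mathfrak{k})$, which is exactly the Arf invariant of $\bar q_i$; when this class is nontrivial, the resulting quadratic extension is the unique unramified one. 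In Case C this gives $\zeta_1 = \xi_{B^i_{11}} = 1$ iff $\bar q_i$ is split and $-1$ otherwise. In Case A, the same analysis applied to the full truncated form (after accounting for the rank-$1$ unit block $B^i_{00}=(\epsilon)$ via the Clifford-invariant sum rule, which is permissible after completing the square against the unit $b_{11}$ over $F$) expresses $\zeta_2 = \eta_B$ in terms of the same Arf invariant, giving the stated value.

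Case B (type $I$, $m_i$ odd) is disposed of by a clean valuation argument. The truncated form has even rank $1+m_i$, and $|\GK^{\leq 1}|=m_i=\Delta(B)$ by Theorem~\ref{thm:2.1}. The reduced-form condition $\mathrm{ord}(2b_{ij}) \geq (a_i+a_j)/2$ forces every entry of $B$ to lie in $\mathfrak{o}$, whence $\det B \in \mathfrak{o}$ and $\mathrm{ord}(D_B) \geq (1+m_i) + 0 = 1+m_i$. If $\xi_B=\pm 1$, Definition~\ref{def:0.3} would require $\mathrm{ord}(D_B)=\Delta(B)=m_i$, which is strictly less than $1+m_i$, a contradiction; hence $\xi_B=0$. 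In Case D (type $II$, $m_i$ odd), $B^i_{11}$ decomposes over $F$ as an even-rank type $II$ unimodular piece $A$ (coming from the type $II$ part of $L_i$) perpendicular to the rank-$1$ summand $\langle 2\epsilon\rangle$ contributed by the type-$I$ extra of the $1$-modular piece $2L_{i-1}\oplus L_{i+1}$; the Clifford-invariant sum rule combined with the Artin-Schreier description of $\eta(A)$ from Case C shows that the Hilbert-symbol contributions from $\eta(A)$ and $\langle\det A, 2\epsilon\rangle$ cancel against the $-1$-twists, yielding $\eta_{B^i_{11}}=1$. The main technical obstacle throughout is this last cancellation in Case D, which requires matching the Arf-invariant description of $\eta(A)$ against the specific square class of $2\epsilon \cdot \det A$ in the unramified dyadic $F$; the unramified hypothesis is essential here, as illustrated by the Appendix's analysis of binary forms over ramified dyadic fields.
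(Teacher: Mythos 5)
Your proposal follows the same overall skeleton as the paper's: read off the shape of $\GK^{\leq 1}$ from Lemma \ref{lem48} and (\ref{eqqq}), note $\zeta_1 = 1$ for the $1\times 1$ block, then compute the remaining $\zeta$ case by case. Your Case B is a valid and arguably cleaner shortcut: the paper writes down the explicit reduced form $(\epsilon)\perp H \perp\cdots\perp (2\epsilon')$ and reads off that $\det B$ has odd valuation, whereas you observe abstractly that $\ord(D_B) > m_i = |\GK^{\leq 1}| = \Delta(B)$ forces $\xi_B = 0$ via Definition \ref{def:0.3}. Cases A and C are phrased through Arf invariants and Artin--Schreier theory; the paper instead uses Theorem 2.4 of \cite{Cho} to put $\begin{pmatrix}B_{00}^i & B_{01}^i\\ B_{10}^i & B_{11}^i\end{pmatrix}$ in a canonical form and then tests $\zeta$ against the isotropy of a small binary or ternary form via Hensel's lemma---a more hands-on verification of the same equivalence.

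The genuine gap is Case D. You assert a cancellation among $\eta(A)$, $\langle\det A, 2\epsilon'\rangle$, and the $\langle -1,\cdot\rangle$-twists, but you never carry it out, and it is not obvious. The paper sidesteps any such computation: by Theorem 2.4 of \cite{Cho}, $B_{11}^i \cong H\perp\cdots\perp H\perp \begin{pmatrix}2a&1\\1&2b\end{pmatrix}\perp(2\epsilon')$, and the ternary tail is always isotropic over $\frko$ (the reduction of $ax^2+xy+by^2+\epsilon'z^2$ mod $\frkp$ has a nontrivial zero with $(\bar x,\bar y)\neq(0,0)$ since $\frkk$ is finite, the $xy$ term provides smoothness, and Hensel lifts). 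An isotropic ternary form is split, hyperbolic planes preserve splitness, hence $\eta_{B_{11}^i}=1$. So what you flag as ``the main technical obstacle'' is in fact the easiest subcase in the paper's treatment. Similarly, your Case A $\zeta_2$ claim (``after accounting for the rank-$1$ unit block\ldots via the Clifford-invariant sum rule'') is asserted rather than proved; the paper reduces it to the isotropy of $\begin{pmatrix}2&1\\1&2a\end{pmatrix}\perp(4\epsilon)$ over $\frko$, again settled by Hensel's lemma, and matches that against the nontrivial zeros of $x^2+xy+\bar a y^2$ over $\frkk$. You should supply those two computations (or an honest Hilbert-symbol calculation) to close the argument.
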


Before proving the proposition, we state the following corollary, which is a direct consequence of the proposition.
\begin{corollary}
$\mathrm{EGK}(L\cap 2^i  L^\sharp)^{\leq 1}$ is independent of the choice of a reduced form $B^i$.
In other words, for given two reduced forms $B^i$ and $C^i$ for $L\cap 2^i  L^\sharp$,
$\mathrm{EGK}(L\cap 2^i  L^\sharp)^{\leq 1}$ associated to $B^i$ is the same as $\mathrm{EGK}(L\cap 2^i  L^\sharp)^{\leq 1}$ associated to $C^i$.
In addition, 
$\mathrm{EGK}(L\cap 2^i  L^\sharp)^{\leq 1}$ determines whether $\bar{q}_{11}^i$ is split or not.
\end{corollary}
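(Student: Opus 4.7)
The plan is first to read off the sequences $(n_s),(m_s)$ in the truncated EGK datum from the shape of the reduced form given by Lemma~\ref{lem48}, and then to compute each $\zeta_s$ by case analysis on the type of $L_i$ and the parity of $m_i$.  By Lemma~\ref{lem48} together with~\eqref{eqqq}, $\GK(L\cap 2^i L^\sharp)^{\leq 1}$ consists of $m_i$ ones coming from $B_{11}^i$, preceded by a single $0$ (from the rank-$1$ block $B_{00}^i=(u_0)$) precisely when $L_i$ is of type $I$.  This immediately yields the $(n_s,m_s)$ data in each of the four sub-cases, and the $m_i=0$ boundary falls out of the stated convention.  Moreover, whenever $L_i$ is of type $I$, the rank-$1$ sub-block $B^{(1)}=(u_0)$ has trivial Clifford invariant, so $\zeta_1=\eta_{B^{(1)}}=1$.

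For Case 2 ($L_i$ of type $I$, $m_i$ odd) I would argue via a parity mismatch.  Here $n=1+m_i$ is even, so $\zeta_2=\xi_{B^{\leq 1}}$; Theorem~\ref{thm:2.1} gives $\Delta(B^{\leq 1})=|\GK(B^{\leq 1})|=m_i$, which is odd.  But if $\xi_{B^{\leq 1}}\neq 0$, Definition~\ref{def:0.3} collapses to $\Delta=\ord(D_{B^{\leq 1}})$, and this valuation is necessarily even for $F$ unramified over $\QQ_2$, being the valuation of either a square or of an element whose square root generates the unramified quadratic extension.  This contradicts $\Delta=m_i$ odd, so $\zeta_2=0$.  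For Case 4 ($L_i$ of type $II$, $m_i$ odd), Theorem~2.4 of~\cite{Cho} writes $B^{\leq 1}=B_{11}^i$ in the canonical form $\bigoplus_{k=1}^{(m_i-1)/2}\left(\begin{smallmatrix}2a_k&u_k\\u_k&2b_k\end{smallmatrix}\right)\perp(2c)$ with $u_k,c\in\frko^{\times}$.  By classical structure theory of $2$-adic lattices, any such form has maximal Witt index $(m_i-1)/2$ over $F$ (pairing the $(2c)$ factor with any $2\times 2$ block yields a rank-$3$ form isotropic over $F$, which lets one iteratively split off hyperbolic planes).  Hence $\zeta_1=\eta_{B^{\leq 1}}=1$.

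For Cases 1 and 3 ($m_i$ even), the bridge to $\bar q_i$ runs through the classical identification
\[
(1+4\frko)\,\big/\,(1+2\frko)^2\;\simeq\;\frkk/\wp(\frkk),\qquad 1+4s\longmapsto \bar s,
\]
valid for $F$ unramified over $\QQ_2$, with $\wp(x)=x^2+x$.  In the canonical form for $B_{11}^i$ (now without the $(2c)$ factor), $\det B_{11}^i=(-1)^{m_i/2}\prod_k u_k^2\cdot\prod_k(1-4a_kb_k/u_k^2)$, so $D_{B_{11}^i}\in F^{\times 2}$ iff $\sum_k\overline{a_kb_k/u_k^2}\in\wp(\frkk)$; the latter sum is precisely (up to sign) the Arf invariant of $\bar q_{11}^i=\bar q_i$.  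Hence $B_{11}^i$ is hyperbolic over $F$ iff $\bar q_i$ is split.  In Case 3 this immediately yields $\zeta_1=\xi_{B_{11}^i}=1$ (resp.\ $-1$) iff $\bar q_i$ is (resp.\ is not) split, since $D_{B_{11}^i}$ has even valuation $m_i$ forcing the associated extension to be trivial or unramified.  In Case 1, the rank-$2$ anisotropic part of a non-split $B_{11}^i$ is, up to $\GL(F)$-equivalence, $2$ times the norm form of the unramified quadratic extension $E/F$, and so represents only elements of odd valuation; attaching the unit form $(u_0)$ does not create a hyperbolic plane, so the Witt indices of $B^{\leq 1}$ and $B_{11}^i$ agree, giving $\zeta_2=\eta_{B^{\leq 1}}=1$ iff $\bar q_i$ is split.

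The main obstacle is verifying the classical identification between square classes in $1+4\frko$ and the Artin--Schreier group $\frkk/\wp(\frkk)$ in the unramified dyadic setting; this is the key bridge that translates the $F$-level invariants $\xi,\eta$ into the residue-field-level Arf invariant of $\bar q_i$.
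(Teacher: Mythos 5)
The paper's own proof of this corollary is essentially empty: it is stated to be ``a direct consequence'' of Proposition~\ref{prop:5.5}, whose explicit formulas for $\mathrm{EGK}(L\cap 2^iL^\sharp)^{\leq 1}$ depend only on $m_i$, the parity type of $L_i$, and the splitting of $\bar q_i$ --- all intrinsic to $L$ --- and from which the split/non-split status of $\bar q_{11}^i$ can be read off. You have, in effect, re-derived Proposition~\ref{prop:5.5} by a different method, which does establish the corollary.

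Your route and the paper's both begin from Lemma~\ref{lem48}, the data in~\eqref{eqqq}, and Cho's canonical forms (Theorem~2.4 of~\cite{Cho}), and both dispose of the odd-$m_i$ cases by parity/Witt-index observations. Where you diverge is in the even-$m_i$ cases: the paper computes $\zeta_1$ (resp.\ $\zeta_2$) by exhibiting an explicit quadratic equation over $\frko$ and applying Hensel's lemma to test isotropy, whereas you invoke the square-class isomorphism $(1+4\frko)/(1+2\frko)^{\times 2}\simeq\frkk/\wp(\frkk)$, $1+4s\mapsto\bar s$, to translate ``$D_{B_{11}^i}\in F^{\times 2}$'' directly into ``$\mathrm{Arf}(\bar q_i)\in\wp(\frkk)$''. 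This is a clean and conceptually transparent bridge, and the identification you flag as the ``main obstacle'' is in fact a one-line exercise in the unramified dyadic setting: $(1+2t)^2=1+4(t+t^2)$, so $1+4s$ is a square precisely when $X^2+X-s$ has a root, which by Hensel (derivative $\equiv 1$) reduces to solvability of $X^2+X+\bar s$ over $\frkk$. Two small inaccuracies worth fixing: the claim that ``even valuation of $D_{B^{\leq 1}}$ forces the extension to be trivial or unramified'' is false in general (e.g.\ $\QQ_2(\sqrt{-1})/\QQ_2$ is ramified with $\ord(-1)=0$); what actually forces it is the fact you computed, namely $D_{B^{\leq 1}}\in F^{\times 2}\cdot(1+4\frko)$. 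Likewise, the step ``$D_{B_{11}^i}\in F^{\times 2}$ implies $B_{11}^i$ hyperbolic'' needs the remark that in Cho's canonical form all but one block is manifestly hyperbolic, and the remaining binary block is hyperbolic iff its discriminant is a square --- discriminant alone does not decide hyperbolicity of an arbitrary even-dimensional form. With these two clarifications your argument is correct and self-contained, and it gives a somewhat more structural explanation of why the $\zeta$'s detect $\mathrm{Arf}(\bar q_i)$ than the paper's direct Hensel computations.
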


\begin{proof}[Proof of Proposition \ref{prop:5.5}]
For (1), we first assume that $m_i$ is even and that $L_i$ is of type $I$.
The case with $m_i=0$ is easy and so we leave it as an exercise. 
Thus we also assume that $m_i$ is nonzero.
By Proposition 3.1 of \cite{IK1},
the quadratic lattice represented by the symmetric matrix $\begin{pmatrix}
B_{00}^i&B_{01}^i\\
B_{10}^i&B_{11}^i
\end{pmatrix}$ is unimodular of type $I^o$.
Thus, using Theorem 2.4 of \cite{Cho}, we have that  \[
\begin{pmatrix}
B_{00}^i&B_{01}^i\\
B_{10}^i&B_{11}^i
\end{pmatrix} \cong
(\epsilon)\perp \begin{pmatrix} 0&1\\ 1&0 \end{pmatrix}\perp \cdots 
\perp \begin{pmatrix} 0&1\\ 1&0 \end{pmatrix} \perp \begin{pmatrix} 2&1\\ 1&2a \end{pmatrix}
\]
 for $\epsilon\equiv 1\textit{ mod 2}$ and $a\in \mathfrak{o}$.
 Note that the right hand side is a reduced form with the involution $\sigma$ such that $\sigma(1)=1$ and $\sigma(i)=(i+1)$ for all even integer $i$.
Thus we can see that  
$\mathrm{EGK}(L\cap 2^i  L^\sharp)^{\leq 1}=(1, m_i;0, 1;1, \zeta_2)$.
To analyze the argument about $\zeta_2$,
we observe that $\bar{q}_{11}^i$ is split if and only if  
the equation $x^2+xy+\bar{a}y^2=0$ has a solution over $\frkk$, other than $(x,y)=(0,0)$.
Here $\bar{a}$ is the image of $a$ in $\frkk$.

On the other hand,  $\zeta_2=1$  if and only if  
$\begin{pmatrix}
B_{00}^i&B_{01}^i\\
B_{10}^i&B_{11}^i
\end{pmatrix} $ is split over $F$.
This is equivalent that 
$(\epsilon)\perp \begin{pmatrix} 2&1\\ 1&2a \end{pmatrix}$ is isotropic over $F$,  if and only if  $\begin{pmatrix} 2&1\\ 1&2a \end{pmatrix} \perp (4\epsilon)$ is isotropic over $F$.
The last condition is equivalent to the condition that $\begin{pmatrix} 2&1\\ 1&2a \end{pmatrix} \perp (4\epsilon)$ is isotropic over $\mathfrak{o}$, 
 if and only if  there is a solution $(x,y,z)$ (with $(x,y,z)\neq (0,0,0)$) over $\mathfrak{o}$ of $x^2+xy+ay^2+2\epsilon z^2=0$.

We claim that this is equivalent that the equation $x^2+xy+\bar{a}y^2=0$ has a nontrivial solution over $\frkk$, other than $(x,y)=(0,0)$.
Assume that $x^2+xy+ay^2+2\epsilon z^2=0$ has a nonzero solution over $\mathfrak{o}$ denoted by $(x_0, y_0, z_0) (\neq (0,0,0))$.
Since this polynomial is homogeneous, we may assume that at least one of $x_0, y_0, z_0$ is a unit in $\mathfrak{o}$.
If either $x_0$ or $y_0$ is a unit, then the equation $x^2+xy+\bar{a}y^2=0$ has a nontrivial solution over $\frkk$, by simply taking reduction modulo $\vpi$.
If both $x_0$ and $y_0$ are non-units and $z_0$ is a unit, we should have $x_0^2+x_0y_0+ay_0^2=-2\epsilon z_0^2$.
The exponential valuation of the left hand side is at least $2$, whereas that of the right hand side is $1$. This is a contradiction.

Conversely, if the equation $x^2+xy+\bar{a}y^2=0$ has a nontrivial solution over $\frkk$, then 
the equation $x^2+xy+ay^2+2\epsilon z^2=0$  also has a nontrivial solution over $\mathfrak{o}$, by the multivariable version of Hensel's lemma (cf. Theorems 2.1 and 3.8 of \cite{Con} or Proposition 5 in Section 2.3 of \cite{BLR}).
This verifies our claim about $\zeta_2$.



Secondly, we assume that $m_i$ is even and that $L_i$ is of type $II$ so that $B_{00}^i$ is empty.
Then as in the above case,
 Proposition 3.1 of \cite{IK1} yields that
the quadratic lattice represented by the symmetric matrix $B_{11}^i$ is unimodular of type $II$.
Thus using  Theorem 2.4 of \cite{Cho}, we have that 
\[
B_{11}^i \cong
 \begin{pmatrix} 0&1\\ 1&0 \end{pmatrix}\perp \cdots 
\perp \begin{pmatrix} 0&1\\ 1&0 \end{pmatrix} \perp \begin{pmatrix} 2a&1\\ 1&2b \end{pmatrix}
\]
 for  $a, b\in \mathfrak{o}$, so that 
$\mathrm{EGK}(L\cap 2^i  L^\sharp)^{\leq 1}=(m_i; 1;\zeta_1)$ and  
 $F(\sqrt{D_{B_{11}^i}})=F(\sqrt{1-4ab})$.
 Here, $D_{B_{11}^i}$ is defined in the paragraph before Definition \ref{def:0.3}.
 
The field extension $F(\sqrt{1-4ab})/F$ is the splitting field of the equation $x^2-x+ab=0$.
Thus  $F(\sqrt{1-4ab})/F$ is either trivial or nontrivial unramified.
In addition, $F(\sqrt{1-4ab})/F$ is trivial  if and only if  $\zeta_1=1$  if and only if 
the equation $x^2-x+ab=0$ has a solution over $\mathfrak{o}$.
Hensel's lemma yields that this is equivalent to the existence of a solution of the equation $x^2+x+\bar{a}\bar{b}=0$ over $\frkk$.

We claim that this is equivalent to the condition that  $\bar{q}_{11}^i$ is split,  if and only if  the equation $\bar{a}x^2+xy+\bar{b}y^2=0$ has a nontrivial solution over $\frkk$.
If $\bar{a}=0$, then both equations have nontrivial solutions.
If $\bar{a}\neq 0$, then the equation $\bar{a}x^2+xy+\bar{b}y^2=0$ is equivalent to $(\bar{a}x)^2+(\bar{a}x)y+\bar{a}\bar{b}y^2=0$.
Then the existence of a nontrivial solution of  $(\bar{a}x)^2+(\bar{a}x)y+\bar{a}\bar{b}y^2=0$ is equivalent to the existence of a solution
of  $x^2+x+\bar{a}\bar{b}=0$ over $\frkk$.
This verifies our claim about $\zeta_1$.

For (2),  assume that $m_i$ is odd.
As in the above case,
 Proposition 3.1 of \cite{IK1} yields that
the quadratic lattice represented by the symmetric matrix $\begin{pmatrix}
B_{00}^i&B_{01}^i\\
B_{10}^i&B_{11}^i
\end{pmatrix}$ has two Jordan components, say $M_0\oplus M_1$, where $\bfs(M_j)=\frkp^j$ with $j=0 \textit{ or }1$.
Note that $M_0$ could be empty depending on the type and the rank of $L_i$, which will be explained below.
If $L_i$ is of type $I$, then  the rank of $M_0$ (resp. $M_1$) is $m_i$ (resp. $1$).  
Thus using  Theorem 2.4 of \cite{Cho}, we have that 
\[
\begin{pmatrix}
B_{00}^i&B_{01}^i\\
B_{10}^i&B_{11}^i
\end{pmatrix} \cong
(\epsilon)\perp \begin{pmatrix} 0&1\\ 1&0 \end{pmatrix}\perp \cdots 
\perp \begin{pmatrix} 0&1\\ 1&0 \end{pmatrix} \perp \begin{pmatrix} 2&1\\ 1&2a \end{pmatrix}\perp (2\epsilon')
\]
 for $\epsilon, \epsilon' \in \mathfrak{o}^{\times}$ and $a\in \mathfrak{o}$.
 If $m_i=1$, then we understand the right hand side to be $(\epsilon)\perp (2\epsilon')$.
 The right hand side is a reduced form of GK-type $(\underline{a}, \sigma)$, where
 $\underline{a}=(0, 1, \cdots, 1)$ and $\sigma$ exchanges   $t$ and $t+1$ for all even integer $t$ with $2\leq t\leq m_i-1$, 
 and fixes $1$ and $m_i+1$. 
Using this, we can easily see that
$\mathrm{EGK}(L\cap 2^i  L^\sharp)^{\leq 1}=(1, m_i;0, 1;1, 0)$.

If $L_i$ is of type $II$, then $B_{00}^i$ is empty and thus 
 the rank of $M_0$ (resp. $M_1$) is $m_i-1$ (resp. $1$).  
 The case with $m_i=1$ is easy and so we leave it as an exercise. 
 Thus we assume that $m_i>1$.
Using  Theorem 2.4 of \cite{Cho},  we have that 
\[
B_{11}^i \cong
\begin{pmatrix} 0&1\\ 1&0 \end{pmatrix}\perp \cdots 
\perp \begin{pmatrix} 0&1\\ 1&0 \end{pmatrix} \perp \begin{pmatrix} 2a&1\\ 1&2b \end{pmatrix}\perp (2\epsilon')
\]
 for $\epsilon'x` \in \mathfrak{o}^{\times}$  and $a, b\in \mathfrak{o}$.
On the other hand, the quadratic lattice represented by 
$\begin{pmatrix} 2a&1\\ 1&2b \end{pmatrix}\perp (2\epsilon')$ is always isotropic, 
as can be proved using
Hensel's lemma.
Thus  we have that
$\mathrm{EGK}(L\cap 2^i  L^\sharp)^{\leq 1}=(m_i;1;1)$.
\end{proof}


\subsection{Final result}\label{secfr}
We now state our main theorem of this section.

\begin{theorem}\label{thm-ldgk}
The local density $\beta(L)$ is completely determined by the collection consisting of  $\mathrm{GK}(L\oplus -L)$ 
together with the $\mathrm{EGK}(L\cap 2^i  L^\sharp)^{\leq 1}$ as $i$ runs over all integers  such that $L_i$ is nonzero, where $L=\oplus_i L_i$ is a Jordan splitting.
In other words,  given any two quadratic lattices $L, M$ satisfying 
\[
\left\{
  \begin{array}{l}
 \textit{$\mathrm{GK}(L\oplus -L)=\mathrm{GK}(M\oplus -M)$};\\
 \textit{$\mathrm{EGK}(L\cap 2^i  L^\sharp)^{\leq 1}=\mathrm{EGK}(M\cap 2^i  M^\sharp)^{\leq 1}$ for all $i$},
    \end{array} \right.
\]
we have that 
\[\beta(L)=\beta(M).\]
\end{theorem}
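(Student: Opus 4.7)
The strategy is to apply the explicit local density formula of Theorem \ref{thmcho52}, which expresses
\[
\beta^\mathrm{C}(L) = \frac{1}{[G:G^\circ]}\, q^{N-\dim G}\, \sharp R_u\tilde{G}(\frkk)\, \sharp(\tilde{G}/R_u\tilde{G})(\frkk),
\]
and then use the normalization identity $\beta^\mathrm{C}(L) = q^{-en(n-1)/2}\beta(L)$ to reduce the claim to $\beta^\mathrm{C}$. It then suffices to show that each of the five ingredients $[G:G^\circ]$, $N$, $\dim G$, $\sharp R_u\tilde{G}(\frkk)$, and $\sharp(\tilde{G}/R_u\tilde{G})(\frkk)$ is already determined by the collection of invariants named in the theorem.

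The first input is Corollary \ref{cortypeli} (with the auxiliary Corollary \ref{cortypelii}): from $\mathrm{GK}(L\oplus -L)$ alone we recover, for every integer $i$, the rank $n_i$ of the Jordan constituent $L_i$ and its type (II, $I^o$, or $I^e$). Since the bound/free dichotomy for $L_i$ depends only on the parity types of $L_{i-1}$ and $L_{i+1}$, it is likewise determined. Consequently every ingredient of Cho's formula that is purely combinatorial in $\{n_i, \text{type}(L_i), \text{bound/free}(L_i)\}$---namely $N$ (through $t$, $b$, $c$, $\alpha$, $\beta$), the quantity $\dim G$, the index $[G:G^\circ]$, and the cardinality $\sharp R_u\tilde{G}(\frkk)$ of the connected unipotent radical (which is a power of $q$ whose exponent is read off from the Jordan data)---is determined by $\mathrm{GK}(L\oplus -L)$ alone.

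It remains to analyze $\sharp(\tilde{G}/R_u\tilde{G})(\frkk)$. By Theorem \ref{thmcho412} this equals $\prod_i \sharp \mathrm{Im}\,\varphi_i(\frkk)\cdot 2^{\alpha+\beta}$. The factor $\mathrm{Im}\,\varphi_i$ is an orthogonal or special orthogonal group whose matrix rank, as tabulated after Theorem \ref{thmcho412}, is a function of the type of $L_i$ (including the $I^e_1$ vs $I^e_2$ subdivision in the free $I^e$ case) and of $n_i$. Over the residue field $\frkk$, the isomorphism class of such a group depends further only on whether the underlying quadratic form $\bar{q}_i$ is split. Proposition \ref{prop:5.5}, together with Lemma \ref{lem:5.4}, shows that $\mathrm{EGK}(L\cap 2^i L^\sharp)^{\leq 1}$ records both the dimension $m_i = \dim \bar{V}_i$ and the invariant $\zeta_1$ or $\zeta_2$ that discriminates whether $\bar{q}_i$ is split; the $I^e_1$ vs $I^e_2$ distinction for free type-$I^e$ constituents is decided by the parity of $m_i$, and is thus encoded as well.

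The main obstacle I anticipate is careful bookkeeping rather than a single hard step: one must check that the reconstruction of $\sharp \mathrm{Im}\,\varphi_i(\frkk)$ from $\mathrm{EGK}(L\cap 2^i L^\sharp)^{\leq 1}$ is uniform across all of Cho's seven image cases, including the boundary situations where $B_{00}^i$ is empty (type II) and $m_i \in \{0,1\}$. Once this verification is complete for each row of the image table, the theorem follows by direct substitution into the formula of Theorem \ref{thmcho52}, since any two lattices $L$ and $M$ meeting the hypotheses will yield identical values of all five ingredients above and hence identical local densities.
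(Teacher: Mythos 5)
The overall architecture of your proof --- substitute into Cho's explicit formula and check that each ingredient is determined by the given invariants --- is exactly the paper's strategy.  However, you draw the line between \textquotedblleft what $\mathrm{GK}(L\oplus -L)$ alone determines\textquotedblright\ and \textquotedblleft what $\mathrm{EGK}(L\cap 2^i L^\sharp)^{\leq 1}$ is needed for\textquotedblright\ in the wrong place, and one of the two resulting gaps is a genuine error, not merely a missing citation.

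First, you assert that Corollaries~\ref{cortypeli} and~\ref{cortypelii} show that $\mathrm{GK}(L\oplus -L)$ \emph{alone} recovers the ranks $n_i$ and types of the Jordan constituents.  They do not.  Corollary~\ref{cortypeli} converts the data $(\mathcal{A}_i,\mathcal{B}_i)$, extracted from an admissible involution attached to a reduced form of $L\oplus -L$, into rank-and-type information; and Corollary~\ref{cortypelii} expresses $\mathcal{A}_i$ in terms of $\mathcal{C}_i$ \emph{and the parity types of $L_{i-1}$ and $L_{i+1}$}, which you have not yet produced.  An admissible involution is attached to the isomorphism class of $L\oplus -L$, not to the bare tuple $\mathrm{GK}(L\oplus -L)$.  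The paper resolves this by first reading off the parity type of each $L_i$ from $\mathrm{GK}(L\cap 2^i L^\sharp)^{\leq 1}$ (parity type $I$ iff $0$ occurs, by Proposition~\ref{prop:5.5}) and only then feeding that into Corollary~\ref{cortypelii}.  Your stronger claim happens to be provable --- from the identity $c_j = 2n_{j-1}-2\epsilon_{j-1}+\epsilon_j+\epsilon_{j-2}$ one solves the recurrence uniquely --- but you must supply that argument; the cited corollaries do not give it.

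Second, and more seriously, you place $\sharp R_u\tilde{G}(\frkk)$ on the list of quantities \textquotedblleft read off from the Jordan data\textquotedblright\ and hence determined by $\mathrm{GK}(L\oplus -L)$ alone.  This is false.  We have $\dim R_u\tilde{G} = \dim G - \sum_i \dim\mathrm{O}(\bar{V}_i,\bar{q}_i)^{\mathrm{red}}$, and from the image table after Theorem~\ref{thmcho412}, for a \emph{free} constituent of type $I^e$ the dimension of $\bar{V}_i$ is $n_i-1$ in the $I^e_1$ case and $n_i-2$ in the $I^e_2$ case.  This $I^e_1$ vs.\ $I^e_2$ distinction is not Jordan data; it is precisely the extra information recovered from $\mathrm{EGK}(L\cap 2^i L^\sharp)^{\leq 1}$ via Lemma~\ref{lem:5.4}.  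Indeed Remark~\ref{rmk2} exhibits $L=\diag(1,1)$ and $M=\diag(1,3)$ (unimodular of types $I^e_1$ and $I^e_2$) with $\EGK(L\oplus -L)=\EGK(M\oplus -M)$ yet $\beta(L)\neq\beta(M)$; since $\mathrm{GK}$ is coarser than $\mathrm{EGK}$, this already kills the claimed determination.  Both gaps are repairable by routing the parity types and the dimension of $\bar{V}_i$ through $\mathrm{EGK}(L\cap 2^i L^\sharp)^{\leq 1}$, exactly as the paper's proof does; as written, though, the intermediate assertions are not justified.
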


\begin{proof}
Firstly, 
by Proposition \ref{prop:5.5}, the parity type of $L_i$ is $I$ if and only if $0\in \mathrm{GK}(L\cap 2^i  L^\sharp)^{\leq 1}$.
Thus the collection of $\{\mathrm{GK}(L\cap 2^i  L^\sharp)^{\leq 1}|i\in \mathbb{Z}, L_i\neq 0\}$  determines the integer $\beta$ in Theorem \ref{thmcho412} and the integers 
$t$ and $b$ in  Theorem \ref{thmcho52}.
 Corollary  \ref{cortypelii} implies that $\mathrm{GK}(L\oplus -L)$ and the collection of $\{\mathrm{GK}(L\cap 2^i  L^\sharp)^{\leq 1}|i\in \mathbb{Z}, L_i\neq 0\}$ determine $\mathcal{A}_i$'s and $\mathcal{B}_i$'s.
Thus by Corollary \ref{cortypeli}, the rank of each $L_i$ (denoted by $n_i$) is determined by  $\mathrm{GK}(L\oplus -L)$ together with the collection of $\{\mathrm{GK}(L\cap 2^i  L^\sharp)^{\leq 1}|i\in \mathbb{Z}, L_i\neq 0\}$.
Therefore, these two invariants determine $N$ as well as $c$ in  Theorem \ref{thmcho52}.

Secondly,  the quadratic space $\bar{V_i}$ (and thus $\#\mathrm{O}(\bar{V_i}, \bar{q_i})^{\mathrm{red}}$) is determined by 
$\mathrm{EGK}(L\cap 2^i  L^\sharp)^{\leq 1}$ as we see using Lemma \ref{lem:5.4} (which determines the dimension of $\bar{V_i}$) 
and Proposition \ref{prop:5.5} (which determines whether or not $\bar{V_i}$ is split).
Thus it suffices to show that the integer $\alpha$ in Theorem \ref{thmcho412} is determined by $\mathrm{GK}(L\oplus -L)$ and 
the collection of $\{\mathrm{GK}(L\cap 2^i  L^\sharp)^{\leq 1}|i\in \mathbb{Z}, L_i\neq 0\}$.

These two invariants determine each  parity type and the rank of $L_i$ as explained above. 
Assume that $L_i$ is free of type $I^e$. 
Then  $L_i$ is of type $I^e_1$ if and only if the dimension of $\bar{V}_i$ is odd.
The latter is determined by $\mathrm{GK}(L\cap 2^i  L^\sharp)^{\leq 1}$ (cf. Lemma \ref{lem:5.4}).
This completes the proof.
\end{proof}

\begin{remark}\label{rmk3}
As in Remark \ref{rmk2}, 
the sequence   $\{\mathrm{EGK}(L\cap 2^i  L^\sharp)^{\leq 1}\}_{L_i\neq 0}$  is not enough to describe the local density $\beta(L)$.
As an  example, Let $L$ be the quadratic lattice represented by the symmetric matrix 
$\begin{pmatrix} \begin{pmatrix} 0&1\\1&0  \end{pmatrix}&0\\0&\begin{pmatrix} 1&0\\0&3  \end{pmatrix}  \end{pmatrix}$
and let $M$ be the quadratic lattice represented by the symmetric matrix 
$\begin{pmatrix} \begin{pmatrix} 0&1\\1&0  \end{pmatrix}&0\\0&1  \end{pmatrix}$.
Then $L$ is unimodular of type $I^e$ and $M$ is unimodular of type $I^o$ 
 so that they have the different local densities.
But we can easily show that 
\[\EGK(L)^{\leq 1}=\EGK(M)^{\leq 1} (=\EGK(M))=(1,2;0,1;1,1).
\]

\end{remark}

\begin{remark}\label{podd}
In this remark, we  assume that $F$ is a finite field extension of $\mathbb{Q}_p$ with $p>2$.
Then a quadratic lattice $(L, Q_L)$ is always diagonalizable. 
That is, there is a basis of $L$ such that with respect to this basis the  symmetric matrix of the quadratic lattice  $(L, Q_L)$  is $(b_1)\perp \cdots \perp (b_n)$, where $\mathrm{ord}(b_i)\leq \mathrm{ord}(b_j)$ if $i  \leq j$.
If we define $a_i$  to be $\mathrm{ord}(b_i)$, then 
Remark 1.1 of \cite{IK1} says that $\mathrm{GK}(L)=(a_1, \cdots, a_n)$.
Thus $\mathrm{GK}(L)$ determines $n_i$ by the relation $n_i=\#\{a_j|a_j=i\}$. 
Here, $n_i$ is 
 the rank of $L_i$, where $L=\oplus L_i$ is a Jordan splitting such that  $\bfs(L_i)=\frkp^i$.
 
 On the other hand,
one of the principal results of \cite{GY} implies that the local density $\beta(L)$ is completely determined by all the $n_i$.
More precisely, by Theorem 7.3 of loc. cit., $\beta(L)$ is determined by all the $n_i$ and the $\#G_i(\frkk)$.
For the definition of $G_i$, see Section 6.2 of loc. cit. 
The algebraic group $G_i$ defined over $\frkk$ is described  in Proposition 6.2.3 of loc. cit. 
It is basically an orthogonal group associated to a nondegenerate quadratic space of dimension $n_i$ over $\frkk$.
If $n_i$ is odd, then $G_i$ is split. 
If $n_i$ is even, then it could be either split or non-split.
Nonetheless,  $\#G_i(\frkk)$ only depends on the field $\frkk$ and the dimension of the quadratic space, which is $n_i$, as described in page 818 of \cite{Cho}.

In conclusion, the local density $\beta(L)$ is completely determined by $\mathrm{GK}(L)$.
\end{remark}

\appendix
\section{The local density of a binary quadratic form}
\label{App:Appendix}

\centerline{Tamotsu IKEDA} 
\centerline{Graduate school of mathematics, Kyoto University,} 
\centerline{Kitashirakawa, Kyoto, 606-8502, Japan}
\centerline{ikeda@math.kyoto-u.ac.jp}
\medskip

\centerline{Hidenori KATSURADA}
\centerline{Muroran Institute of Technology}
\centerline{27-1 Mizumoto, Muroran, 050-8585, Japan}
\centerline{hidenori@mmm.muroran-it.ac.jp}
\bigskip

In this appendix, we calculate the local density of a binary form over a dyadic field $F$ which may not be an unramified extension of $\QQ_2$.
We also calculate the Gross-Keating invariant $\GK(L\perp -L)$ and the truncated EGK invariant $\EGK(L\cap \vpi^i L^\sharp)^{\leq 1}$ for a binary quadratic lattice $L$.
The local density formula is given in Proposition \ref{prop:A6}.
We also show that the local density is not determined by $\GK(L\perp -L)$ and $\EGK(L\cap \vpi^i L^\sharp)^{\leq 1}$, if we drop the assumption that $F/\QQ_2$ is unramified (See Example \ref{ex:A1}).

Let $\frko$ be the ring of integers of $F$, $\frkp$ the maximal ideal of $\frko$,  $\vpi$ a prime element of $F$, and $q$ the order of the residue field.
The ramification index of $F/\QQ_2$ is denoted by $e$.

Let $(L, Q)$ and $(L_1, Q_1)$ be quadratic lattices of rank $n$ over $\frko$.
We say that $(L, Q)$ and $(L_1, Q_1)$ are weakly equivalent if there exist an isomorphism $\iot:L\rightarrow L_1$ and a unit $u\in \frko^\times$ such that $u Q_1(\iot(x))=Q(x)$ for any $x\in L$.
Similarly, we say that $B, B_1\in \calh_n(\frko)$ are weakly equivalent if there exist a unimodular matrix $U\in\GL_n(\frko)$ and a unit $u \in\frko^\times$ such that $u B_1=B[U]$.
If $B$ and $B_1$ are weakly equivalent, then $\GK(B)=\GK(B_1)$.
Recall that a half-integral symmetric matrix $B\in\calh_n(\frko)$ is primitive if $\vpi^{-1}B\notin \calh_n(\frko)$.
Put $\GK(B)=(a_1, a_2, \ldots, a_n)$.
Then $B$ is primitive if and only if $a_1=0$.

Let $E/F$ be a  semi-simple quadratic algebra.
This means that $E$ is a quadratic extension of $F$ or $E=F\times F$.
The non-trivial automorphism of $E/F$ is denoted by $x\mapsto \bar x$.
Note that if  $E=F\times F$, we have $\overline{(x_1, x_2)}=(x_2, x_1)$.
Let $\frko_E$ be the maximal order of $E$.
In the case $E=F\times F$, $\frko_E=\frko\times \frko$.
The discriminant ideal of $E/F$ is denoted by $\frkD_E$.
When $E=F\times F$, we understand $\frkD_E=\frko$.
Put $d=\ord(\frkD_E)$ and
\[
\xi=
\begin{cases}
1 & \text{ if $E=F\times F$, } \\
-1 & \text{ if $E/F$ is unramified quadratic extension, } \\
0 & \text{ if $E/F$ is ramified quadratic extension. }
\end{cases}
\]
We say that $E/F$ is unramified, if $d=0$.
Note that $d\in\{2g\,|\, g\in\ZZ, 0\leq g \leq e\}\cup\{2e+1\}$.
The order $\frko_{E, f}$ of conductor $f$ for $E/F$ is defined by $\frko_{E, f}=\frko+\frkp^f\frko_E$.
Any open $\frko$-subring of $\frko_E$ is of the form $\frko_{E, f}$ for some non-negative integer $f$.

\begin{proposition}[\cite{IK1}, Proposition 2.1] 
\label{prop:6.1}
Let $B\in\calhnd_2(\frko)$ be a primitive half-integral symmetric matrix of size $2$ and $(L, Q)$ its associated quadratic lattice.
Put $E=F(\sqrt{D_B})/F$.
When $D_B\in F^{\times 2}$, we understand $E=F\times F$.
Put $f=(\ord(D_B)-\ord(\mathfrak{D}_E))/2$.
Then $f$ is a non-negative integer and $(L, Q)$ is weakly equivalent to $(\frko_{E, f}, \caln)$, where $\caln$ is the norm form for $E/F$.
\end{proposition}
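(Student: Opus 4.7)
The plan is to realize $V=L\otimes_{\frko}F$ as a rank-one module over $E$ via the even Clifford algebra, identify $L$ as a principal fractional ideal of an order of $\frko_E$, and read off the conductor from the discriminant.

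First, I would observe that the even Clifford algebra $C^+(V)$ of the binary quadratic space $(V,Q)$ is a commutative quadratic étale $F$-algebra whose discriminant is $D_B$ modulo $F^{\times 2}$, and hence $C^+(V)\cong E$ (with $E=F\times F$ in the split case $D_B\in F^{\times 2}$). The natural left multiplication of $C^+(V)$ on $V\subseteq C(V)$ makes $V$ into a free $E$-module of rank one. Choosing any $v_0\in V$ with $u_0:=Q(v_0)\in F^\times$, a direct computation in an orthogonal basis of $V$ shows that the resulting $F$-linear isomorphism $\iot_0\colon E\to V$, $\alpha\mapsto \alpha\cdot v_0$, satisfies $Q(\iot_0(\alpha))=u_0\,\caln(\alpha)$ for all $\alpha\in E$.

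Next, I would pull $L$ back along $\iot_0^{-1}$ to a full $\frko$-lattice in $E$. Its ring of multipliers $\frko_L=\{x\in E\mid xL\subseteq L\}$ is an $\frko$-order of $\frko_E$, and the standard classification of such orders gives $\frko_L=\frko_{E,f}$ for a unique $f\ge 0$. Because $\frko_{E,f}$ is a Gorenstein local ring and $L$ is a proper $\frko_{E,f}$-module (its multiplier ring is exactly $\frko_{E,f}$) of $F$-rank two, $L$ is free of rank one, so $L=\alpha\,\frko_{E,f}$ for some $\alpha\in E^\times$. The composite $\frko_{E,f}\to L$, $y\mapsto \iot_0(\alpha y)$, then pulls $Q$ back to $u\cdot\caln$ on $\frko_{E,f}$, with $u=u_0\caln(\alpha)\in F^\times$. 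Since $\caln(1)=1$, we obtain $\bfn(L)=u\,\frko$, and the primitivity hypothesis $\bfn(L)=\frko$ (equivalently $a_1=0$) forces $u\in \frko^\times$; therefore $(L,Q)$ is weakly equivalent to $(\frko_{E,f},\caln)$.

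Finally, a direct computation using the basis $1,\vpi^f\theta$ of $\frko_{E,f}$ (where $\theta$ generates $\frko_E$ over $\frko$) shows that the determinant of the bilinear form $\langle\,,\,\rangle_{\caln}$ differs from that of $\caln$ on $\frko_E$ by a factor of $\vpi^{2f}$, so $\ord(D_B)=2f+d$, giving $f=(\ord(D_B)-d)/2\ge 0$. The main obstacle is the structural step in the middle paragraph: classifying the $\frko$-orders of $\frko_E$ as exactly the $\frko_{E,f}$ and showing that every proper $\frko_{E,f}$-module of $F$-rank two is principal. Both rely on the Gorenstein (Bass) property of $\frko_{E,f}$ and must be verified uniformly across the inert, ramified, and split ($E=F\times F$) cases.
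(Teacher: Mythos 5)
This proposition is stated in the paper without proof: it is imported verbatim from \cite{IK1}, Proposition 2.1, so there is no internal argument to compare your work against. Evaluating your proposal on its own terms, it is correct and it reaches the desired conclusion through the natural conceptual route, namely the even Clifford algebra.

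Your key steps all check out. The identification $C^+(V)\cong F[X]/(X^2-2b_{12}X+b_{11}b_{22})\cong F(\sqrt{-\det B})=E$ is correct because $D_B=-4\det B$ and completing the square gives $F(\sqrt{D_B})$. The functional equation $Q(\iota_0(\alpha))=u_0\,\caln(\alpha)$ follows from the orthogonal-basis computation $\omega\cdot e_1=-Q(e_1)e_2$, which also shows that $\iota_0$ is an $F$-linear isomorphism precisely because $Q(v_0)\ne 0$ (the determinant of the matrix sending $1,\omega$ to $v_0,\omega v_0$ is $-Q(v_0)$ up to sign); you might state this one-line check explicitly since $E$ is not a field in the split case. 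The classification $\frko_L=\frko_{E,f}$ and the principality of proper fractional ideals via the Gorenstein/Bass property are standard, and you rightly flag that the split case $f=0$ should be handled separately (where $\frko_E=\frko\times\frko$ is not local, but principality is immediate). The norm computation $\bfn(\frko_{E,f},u\caln)=u\frko$, combined with the equivalence of primitivity of $B$ with $\bfn(L)=\frko$ (both equal $(b_{11},2b_{12},b_{22})=\frko$, equivalently $a_1=0$), forces $u\in\frko^\times$. Finally, with the basis $1,\vpi^f\theta$ of $\frko_{E,f}$ the Gram determinant is $-\tfrac{1}{4}\vpi^{2f}(\theta-\bar\theta)^2$, hence $\ord(D_B)=2f+d$, i.e.\ $f=(\ord(D_B)-\ord(\frkD_E))/2\ge 0$. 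So the proposal is a complete and correct proof; I simply cannot certify whether it reproduces or diverges from the argument actually given in \cite{IK1}.
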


\begin{proposition}[\cite{IK1}, Proposition 2.2] 
\label{prop:6.2}
The Gross-Keating invariant of the binary quadratic form $(L, Q)=(\frko_{E,f}, \caln)$ is given by
\[
\begin{cases}
(0, 2f) & \text{ if $E/F$ is unramified,}
\\
(0, 2f+1) & \text{ if $E/F$ is ramified.}
\end{cases}
\]
\end{proposition}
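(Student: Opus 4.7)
The plan is to determine $\GK(L)=(a_1,a_2)$ by pinning down $a_1$ directly and then recovering $a_2$ via the sum formula $a_1+a_2=\Del(B)$ of Theorem~\ref{thm:2.1}.

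Since $\mathcal{N}(1)=1\in\frko^\times$, the norm ideal satisfies $\bfn(L)=\frko$; by Remark~\ref{rmk1}(3), $a_1$ equals the exponential valuation of a generator of $\bfn(L)$, hence $a_1=0$. To compute $\ord(D_B)$, I would pick an explicit basis: when $E$ is a field, take $\theta\in\frko_E$ with $\frko_E=\frko[\theta]$ and use $(1,\vpi^f\theta)$ as an $\frko$-basis of $\frko_{E,f}$. The resulting Gram matrix
\[
B=\begin{pmatrix} 1 & \vpi^f\,\mathrm{tr}(\theta)/2 \\ \vpi^f\,\mathrm{tr}(\theta)/2 & \vpi^{2f}\mathcal{N}(\theta) \end{pmatrix}
\]
satisfies $D_B=-4\det B=\vpi^{2f}(\theta-\bar\theta)^2$, so $\ord(D_B)=2f+d$ with $d=\ord(\frkD_E)$. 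The split case $E=F\times F$ is analogous, with basis $((1,1),\vpi^f(1,0))$ yielding $D_B=\vpi^{2f}$ and $d=0$ by convention.

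It then remains to apply Definition~\ref{def:0.3}. If $E/F$ is unramified (or $E=F\times F$), then $d=0$ and $\Del(B)=\ord(D_B)=2f$. If $E/F$ is ramified, then $\sqrt{D_B}=\vpi^f(\theta-\bar\theta)$ lies in $E\setminus F$, so $F(\sqrt{D_B})=E$ and $\ord(\frkD_B)=d>0$; hence $\Del(B)=(2f+d)-d+1=2f+1$. Combining these with $a_1=0$ and Theorem~\ref{thm:2.1} yields $\GK(L)=(0,2f)$ or $(0,2f+1)$ in the respective cases. The only subtle step is the identification $F(\sqrt{D_B})=E$ in the ramified case, which is immediate because $\theta-\bar\theta\ne 0$ generates $E$ over $F$.
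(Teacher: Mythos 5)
The paper does not prove Proposition~\ref{prop:6.2}; it is cited from \cite{IK1}, Proposition 2.2. Your argument is therefore a self-contained replacement rather than a comparison target, and it is correct: $a_1=0$ follows from $\bfn(\frko_{E,f})=\frko$ (since $\caln(1)=1$) together with the identity ``$a_1=\ord(\bfn(L))$'' (which is indeed used in exactly this form in the appendix's discussion before Proposition~\ref{prop:A.9}); the Gram-matrix computation gives $D_B=\vpi^{2f}(\theta-\bar\theta)^2$, hence $\ord(D_B)=2f+d$; and since $F(\sqrt{D_B})=E$ whenever $E$ is a field (and $D_B\in F^{\times 2}$ when $E=F\times F$), Definition~\ref{def:0.3} and Theorem~\ref{thm:2.1} pin down $a_2$. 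One small point worth flagging: Remark~\ref{rmk1}(3) as stated in Section~\ref{sss1} sits under the standing hypothesis that $F/\QQ_2$ is unramified, whereas Proposition~\ref{prop:6.2} is used for arbitrary dyadic $F$; the fact is true in that generality (and the appendix relies on it), but if you want a purely internal citation you should point to the formulation preceding Proposition~\ref{prop:A.9} rather than Remark~\ref{rmk1}(3). For contrast, the route taken in \cite{IK1} (and mirrored in the proof of Proposition~\ref{prop:A7} here) is to exhibit the explicit Gram matrix $\begin{pmatrix} 1 & \vpi^{f}\mathrm{tr}(\ome)/2 \\ \vpi^{f}\mathrm{tr}(\ome)/2 & \vpi^{2f}\caln(\ome)\end{pmatrix}$ as a reduced form of the appropriate GK type and invoke Theorem~\ref{thm5.1}; your approach avoids verifying the admissibility and reducedness conditions at the cost of invoking the $|\GK|=\Del$ theorem, which is a reasonable trade and arguably cleaner for this two-variable case.
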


The following lemma is well-known.
\begin{lemma} 
\label{lem:6.3}
We have
\[
[\frko^\times:\frko^{\times 2}(1+\frkp^f)]=
\begin{cases}
q^{\left[\frac f2\right]} & \text{ if $0<f\leq 2e$,} \\
2q^e & \text{ if $f> 2e$.} 
\end{cases}
\]
\end{lemma}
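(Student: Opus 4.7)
The plan is to reduce the index computation to counting square roots of unity in a finite quotient, and then perform an explicit count by stratifying on the valuation of $x=u-1$.

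Set $U^{(k)}:=1+\frkp^k$. Since $q$ is even, $\frkk^\times$ has odd order and squaring is a bijection on $\frkk^\times$; lifting any residue shows $\frko^\times=\frko^{\times 2}U^{(1)}$, while $\frko^{\times 2}\cap U^{(1)}=(U^{(1)})^2$ because $\bar v^2=\bar 1$ in $\frkk^\times$ forces $\bar v=\bar 1$. Thus $[\frko^\times:\frko^{\times 2}U^{(f)}]=[U^{(1)}:(U^{(1)})^2 U^{(f)}]$. The squaring map descends to an endomorphism $\bar\sigma$ of the finite abelian group $U^{(1)}/U^{(f)}$ with image $(U^{(1)})^2 U^{(f)}/U^{(f)}$, so the general identity $|G/\operatorname{im}\phi|=|\ker\phi|$ for endomorphisms of a finite abelian group gives
\[
[U^{(1)}:(U^{(1)})^2 U^{(f)}]=|\ker\bar\sigma|.
\]
Via the set bijection $U^{(1)}/U^{(f)}\leftrightarrow\frkp/\frkp^f$, $1+x\leftrightarrow x$, and the identity $(1+x)^2-1=x(x+2)$, this kernel is identified with $\{\,x\in\frkp/\frkp^f:x(x+2)\in\frkp^f\,\}$.

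It remains to count this set. Write $2=\vpi^e u_0$ with $u_0\in\frko^\times$ and stratify by $k=\ord(x)$. For $1\leq k<e$ one has $\ord(x+2)=k$, so the condition becomes $2k\geq f$; for $k>e$ one has $\ord(x+2)=e$, so the condition becomes $k\geq f-e$; and at the borderline $k=e$, writing $x=\vpi^e a$ with $a\in\frko^\times$, the condition becomes $\ord(a+u_0)\geq f-2e$, which is vacuous when $f\leq 2e$ but cuts out exactly $q^e$ residues (those with $a\equiv -u_0\pmod{\frkp^{f-2e}}$) when $f>2e$. For $1\leq k\leq f-1$ the number of $x\in\frkp/\frkp^f$ with $\ord(x)=k$ equals $q^{f-k-1}(q-1)$, and $x=0$ contributes one more. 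When $0<f\leq 2e$ the valid $k$'s form the range $\lceil f/2\rceil\leq k\leq f-1$, and a telescoping geometric sum gives the total
\[
1+(q-1)\sum_{j=0}^{f-\lceil f/2\rceil-1}q^j \;=\; q^{f-\lceil f/2\rceil}\;=\;q^{[f/2]};
\]
when $f>2e$, only $k=e$ (contributing $q^e$) and $f-e\leq k\leq f-1$ (contributing $(q-1)\sum_{j=0}^{e-1}q^j=q^e-1$) survive, so together with $x=0$ the total is $q^e+(q^e-1)+1=2q^e$.

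The main delicacy is the borderline case $k=e$, where $2x$ and $x^2$ share the common valuation $2e$ and the naive valuation argument breaks down. The behavior switches precisely at $f=2e+1$: handling the congruence $\ord(a+u_0)\geq f-2e$ carefully reveals the extra $q^e$ square roots of unity that account for the jump from $q^{[f/2]}$ at $f=2e$ to $2q^e$ for $f>2e$.
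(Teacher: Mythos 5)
The paper states this lemma without proof, calling it ``well-known,'' so there is no internal argument to compare against; assessed on its own terms, your proof is correct. The reduction $[\frko^\times:\frko^{\times2}(1+\frkp^f)]=[U^{(1)}:(U^{(1)})^2U^{(f)}]=|\ker\bar\sigma|$ is sound: the identity you actually need, $U^{(1)}\cap\frko^{\times2}U^{(f)}=(U^{(1)})^2U^{(f)}$, follows from the fact you cite ($\frko^{\times2}\cap U^{(1)}=(U^{(1)})^2$) together with the modular law for subgroups, since $U^{(f)}\subset U^{(1)}$; the kernel/image index identity for an endomorphism of a finite abelian group is standard; and the set bijection $1+x\leftrightarrow x$ between $U^{(1)}/U^{(f)}$ and $\frkp/\frkp^f$ is well-defined because $(1+x)(1+x')^{-1}-1=(x-x')/(1+x')$ with $1+x'\in\frko^\times$. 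The stratified count of $\{x\in\frkp/\frkp^f:x(x+2)\in\frkp^f\}$ is carried out correctly, including the delicate borderline $k=e$, where $\ord(x(x+2))=2e+\ord(a+u_0)$ and the condition becomes a congruence $a\equiv -u_0\bmod\frkp^{f-2e}$ cutting out exactly $q^e$ units of $\frko/\frkp^{f-e}$ when $f>2e$; the two telescoping geometric sums evaluate to $q^{[f/2]}$ and $2q^e$ as claimed (and the edge case $f=1$, where the summation range is empty, is consistent). Textbook treatments typically derive the same formula from the structure theorem $\frko^\times\cong\mu(F)\times\ZZ_2^{[F:\QQ_2]}$ by tracking squares through the decomposition; your direct count avoids invoking that structure at the cost of the case analysis at $k=e$, and is a perfectly legitimate route.
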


Choose $\ome\in \frko_E$ such that $\frko_E=\frko+\frko\ome$.
If $E/F$ is unramified, then $\ord_E(\ome)=0$.
It $E/F$ is ramified, then we may assume $\ome$ is a prime element of $E$.
Put $h=\ord_E(\ome)$.

We fix an $\frko$-module isomorphism $\frko^2\simeq \frko_{E, f}$ by $(x, y)\mapsto x+\vpi^f \ome y$.
By this isomorphism, we identify $\frko^2$ and $\frko_{E,f}$.
We consider a quadratic form $Q(x, y)$ by
\[
Q(x, y)=\caln(x+\vpi^f \ome y)=x^2+\vpi^f \mathrm{tr}(\ome) xy +\vpi^{2f}\caln(\ome)y^2.
\]
An $\frko$-endomorphism of $\frko_{E,f}$ is expressed as 
\[
U_{\alp, \bet}(x+ \vpi^f \ome y) = \alp x + \bet \vpi^f \ome y
\]
for some $\alp\in \frko_{E, f}$ and $\bet\in (\vpi^f\ome)^{-1}\frko_{E, f}$.
Note that $U_{\alp, \alp}\circ U_{\bet, \gam}=U_{\alp\bet, \alp\gam}$.
Note also that
\[
Q(U_{\alp, \bet}(x,y))=
\caln(\alp)x^2+\vpi^f \mathrm{tr}(\bar\alp\ome\bet) xy +\vpi^{2f}\caln(\ome\bet)y^2.
\]
We shall determine when $Q\circ U_{\alp, \bet}\equiv Q$ mod $\frkp^N$, where $N$ is a sufficiently large integer.
Put 
\[
V_N=\{\alp\in \frko_{E,f}\,|\, \caln(\alp)\equiv 1 \text{ mod } \frkp^N\}.
\]
Then $V_N\subset \frko_{E,f}^\times$.
Clearly, if $Q\circ U_{\alp, \bet}\equiv Q$ mod $\frkp^N$, then $\alp\in V_N$.
Replacing $U_{\alp, \bet}$ by $U_{\alp, \alp}^{-1}\circ U_{\alp, \bet}$, we may assume $\alp=1$.
Then $\bet$ belongs to the set
\[
W_N=\left\{\bet\in (\vpi^f\ome)^{-1}\frko_{E, f}\ \vrule\ \begin{matrix} \mathrm{tr}(\ome\bet)\equiv \mathrm{tr}(\ome) \text{ mod } \frkp^{N-f} \\ \caln(\bet)\equiv 1 \text{ mod } \frkp^{N-2f-h}\ \  \end{matrix}\right\}.
\]
Thus we have
\begin{align*}
\{U_{\alp, \bet}\,|\,\text{$Q\circ U_{\alp, \bet}\equiv Q$ mod $\frkp^N$}\}
=\{U_{\alp, \alp}\circ U_{1, \bet}\,|\, \alp\in V_N, \ \bet\in W_N\},
\end{align*}
As we have assumed that $N$ is sufficiently large, we have $W_N\subset \frko_{E,f}^\times$.
Then  the local density for $(\frko_{E,f}, Q)$ is 
\[
\frac 12 q^{3N}
\frac{\mathrm{Vol}(V_N)} {\mathrm{Vol}(\frko_{E,f})}
\frac{\mathrm{Vol}(W_N)} {\mathrm{Vol}((\vpi^f \ome)^{-1}\frko_{E,f})}.
\]
\begin{lemma} 
\label{lem:6.4}
\[
\frac{\mathrm{Vol}(W_N)} {\mathrm{Vol}((\vpi^f \ome)^{-1}\frko_{E,f})}
=2 q^{-2N+2f+d}.
\]
\end{lemma}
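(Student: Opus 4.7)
The plan is to realize $W_N$ as the preimage of the small box
\[
B=\bigl(\mathrm{tr}(\ome)+\frkp^{N-f}\bigr)\times\bigl(1+\frkp^{N-2f-h}\bigr)\subset F\times F
\]
under the smooth polynomial map $\Phi\colon(\vpi^f\ome)^{-1}\frko_{E,f}\to F\times F$ defined by $\bet\mapsto(\mathrm{tr}(\ome\bet),\caln(\bet))$, and then to apply a $p$-adic change-of-variables formula. Normalizing the Haar measure on $F$ so that $\frko$ has volume $1$ and using the $\frko$-basis $\{1,(\vpi^f\ome)^{-1}\}$ of $(\vpi^f\ome)^{-1}\frko_{E,f}$, this lattice has volume $1$ in the induced measure on $E$, so it suffices to establish $\mathrm{Vol}(W_N)=2q^{-2N+2f+d}$.

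First I would identify the preimages of the basepoint $(\mathrm{tr}(\ome),1)$. Eliminating $\bar\bet$ between $\ome\bet+\bar\ome\bar\bet=\mathrm{tr}(\ome)$ and $\bet\bar\bet=1$ yields the quadratic $\ome\bet^2-\mathrm{tr}(\ome)\bet+\bar\ome=0$, with roots $\bet_1=1$ and $\bet_2=\bar\ome/\ome$. Rewriting $\bet_2=-1+\mathrm{tr}(\ome)\cdot\ome^{-1}$ verifies that $\bet_2\in(\vpi^f\ome)^{-1}\frko_{E,f}$ since $\mathrm{tr}(\ome)\in\frko$. In the coordinates $\delta=x+y(\vpi^f\ome)^{-1}$ the differential $d\Phi_{\bet_0}$ has matrix
\[
\begin{pmatrix}\mathrm{tr}(\ome) & 2\vpi^{-f}\\ \mathrm{tr}(\bar\bet_0) & \vpi^{-f}\mathrm{tr}(\bar\bet_0/\ome)\end{pmatrix},
\]
and direct substitution at $\bet_0=\bet_1,\bet_2$ yields determinant $\pm\vpi^{-f}(\ome-\bar\ome)^2/\caln(\ome)$. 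Since $\frko_E=\frko[\ome]$ forces $\mathrm{ord}((\ome-\bar\ome)^2)=d$, and a short case check gives $\mathrm{ord}(\caln(\ome))=h$ (in the unramified case $\caln(\ome)$ is a unit and $h=0$; in the ramified case both $\ome,\bar\ome$ are uniformizers of $E$ with $e_{E/F}=2$, giving $h=1=\mathrm{ord}(\caln(\ome))$), we obtain $|\det d\Phi_{\bet_0}|=q^{f-d+h}$ at both basepoints.

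Finally, for $N$ sufficiently large, the branch separation $|\bet_1-\bet_2|=|(\ome-\bar\ome)/\ome|$ exceeds the diameter of each component of $W_N$, so the $p$-adic inverse function theorem applies on each branch: $\Phi$ maps an open neighborhood of each $\bet_i$ bijectively onto $B$, which has volume $q^{-(N-f)-(N-2f-h)}=q^{-2N+3f+h}$. Summing the two branch contributions via the change-of-variables formula yields
\[
\mathrm{Vol}(W_N)=2\cdot\frac{q^{-2N+3f+h}}{q^{f-d+h}}=2q^{-2N+2f+d},
\]
which is the claimed ratio. The principal obstacle will be to make precise what ``$N$ sufficiently large'' means — producing an explicit lower bound in terms of $d$, $h$, and the ramification index $e$ of $F/\QQ_2$ below which either the two branches of $\Phi^{-1}(B)$ might fail to separate or Newton's method underlying the inverse function theorem might fail to converge, particularly in the highly ramified regime $d=2e+1$ where $\ome-\bar\ome$ has large valuation — and to treat the split case $E=F\times F$ uniformly with the convention $\mathrm{ord}_E(\ome_1,\ome_2)=\mathrm{ord}(\ome_1)+\mathrm{ord}(\ome_2)$.
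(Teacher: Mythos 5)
Your proof is correct and takes a genuinely different route from the paper's. The paper first derives the explicit factorization $(\bar\bet-1)(\ome\bet-\bar\ome)\equiv 0 \bmod \vpi^{N-2f}\frko_E$ for $\bet\in W_N$, concludes $W_N$ lies in the disjoint union of two small balls around $1$ and $\bar\ome/\ome$, and then parametrizes the ball around $1$ by writing $\bet=1+\vpi^{N-2f-d}\gam$; this reduces the volume count to a linear change of variables through the $2\times 2$ matrix $\begin{pmatrix}2 & \mathrm{tr}(\ome)\\ \mathrm{tr}(\ome) & 2\caln(\ome)\end{pmatrix}$, whose determinant has order $d$. You instead view $W_N$ as $\Phi^{-1}(B)$ for the quadratic map $\Phi(\bet)=(\mathrm{tr}(\ome\bet),\caln(\bet))$, find the two fiber points $\{1,\bar\ome/\ome\}$ over the base point, compute the Jacobian determinant at each (again essentially $(\ome-\bar\ome)^2$, order $d$), and apply the $p$-adic change-of-variables formula. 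The underlying determinant calculation is the same discriminant; what differs is the organizing framework. The paper's version is more elementary and self-contained — it avoids any appeal to the $p$-adic inverse function theorem and gives the branch decomposition by an explicit algebraic identity — while yours is more conceptual and immediately exhibits the factor of $2$ as the degree of the fiber and the factor $q^{-d}$ as the discriminant of the norm map. Two remarks: (i) the issue you flag about quantifying "$N$ sufficiently large" is present in the paper's argument too (e.g.\ the paper needs $W_N\subset\frko_{E,f}^\times$ and needs the two translates of $W'_N$ to be disjoint), so you are not at a disadvantage there; (ii) your simplified determinant $\pm\vpi^{-f}(\ome-\bar\ome)^2/\caln(\ome)$ implicitly assumes $\ome$ is invertible in $E$, which can fail in the split case $E=F\times F$ when $q=2$ (the only generators $\ome$ of $\frko_E=\frko\times\frko$ over $\frko$ are then zero divisors). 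The same caveat lurks in the paper's use of $\bar\ome/\ome$ and $(\vpi^f\ome)^{-1}\frko_{E,f}$, but you should either restrict to invertible $\ome$ or work the split case separately, where the norm form is explicitly hyperbolic.
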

\begin{proof}
For $\bet\in W_N$, we have
\begin{align*}
(\bar \bet-1)(\ome \bet-\bar\ome)&\equiv \ome\caln(\bet)-\mathrm{tr}(\ome \bet
)+\bar \ome \\
&\equiv \ome-\mathrm{tr}(\ome)+\bar \ome \\
&\equiv 0 \qquad \qquad\qquad\text{ mod } \vpi^{N-2f}\frko_E.
\end{align*}
It follows that
\[
W_N\subset (1+\vpi^{N-2f-d}\frko_E)\cup \left(\frac{\bar\ome}{\ome}+\vpi^{N-2f-d}\frko_E\right).
\]
Put $W'_N=W_N\cap (1+\vpi^{N-2f-d}\frko_E)$.
Then we have 
\[
W_N=W'_N\cup \frac{\bar\ome}{\ome}\overline{W'_N}.
\]
Note that $\ord_E(1-\frac{\bar\ome}{\ome})=\ord_E(\ome^{-1}(\ome-\bar\ome))=d-h$, and so we have $W'_N\cap \frac{\bar\ome}{\ome}\overline{W'_N}=\emptyset$, since $N$ is sufficiently large.
Hence we have $\mathrm{Vol}(W_N)=2\mathrm{Vol}(W'_N)$.
Note that
\[
W'_N=
\left\{
1+\vpi^{N-2f-d}\gam\in 1+\vpi^{N-2f-d}\frko_E
\ \vrule\ \begin{matrix} \mathrm{tr}(\ome\gam)\equiv 0 \text{ mod } \frkp^{f+d} \\ \mathrm{tr}(\gam)\equiv 0 \text{ mod } \frkp^{d-h}\ \  \end{matrix}\right\}.
\]
Observe that if $\gam=x+\bar\ome y$, \ $x, y\in\frko$, then
\[
\begin{pmatrix}
\mathrm{tr}(\gam) \\
\mathrm{tr}(\ome\gam)
\end{pmatrix}
=
\begin{pmatrix}
2 & \mathrm{tr}(\ome) \\ \mathrm{tr}(\ome) & 2\caln(\ome)
\end{pmatrix}
\begin{pmatrix}
x \\ y
\end{pmatrix}.
\]
Since $\ord\left(\det \begin{pmatrix}
2 & \mathrm{tr}(\ome) \\ \mathrm{tr}(\ome) & 2\caln(\ome)
\end{pmatrix}\right)=\ord(\ome-\bar\ome)^2=d$, we have
\[
\frac{\mathrm{Vol}(W'_N)} {\mathrm{Vol}(1+\vpi^{N-2f-d}\frko_E)}
=
q^{-f-d+h}.
\]
Note also that
\[
\mathrm{Vol}((\vpi^f \ome)^{-1}\frko_{E,f})=q^{2f+h}\mathrm{Vol}(\frko_{E,f})=
q^{f+h}\mathrm{Vol}(\frko_{E}).
\]
Hence we have
\begin{align*}
\frac{\mathrm{Vol}(W_N)} {\mathrm{Vol}((\vpi^f \ome)^{-1}\frko_{E,f})}
&=2 \frac{\mathrm{Vol}(W'_N)} 
{\mathrm{Vol}(1+\vpi^{N-2f-d}\frko_E)}
\frac
{\mathrm{Vol}(1+\vpi^{N-2f-d}\frko_E)}
{\mathrm{Vol}((\vpi^f \ome)^{-1}\frko_{E,f})}
\\
&=
2 q^{-f-d+h}\cdot q^{-2N+3f+2d-h}
\\
&=2 q^{-2N+2f+d}.
\end{align*}
\end{proof}

\begin{lemma} 
\label{lem:6.5}
\begin{itemize}
\item[(1)]
If $E/F$ is unramified, then
\[
\frac{\mathrm{Vol}(V_N)} {\mathrm{Vol}(\frko_{E,f})}
=
\begin{cases}
q^{-N}(1-\xi q^{-1}) & \text{ if $f=0$,} 
\\ \noalign{\vskip 3pt}
q^{-N+[f/2]} & \text{ if $0 < f \leq 2e$,} 
\\ \noalign{\vskip 2pt}
2q^{-N+e} & \text{ if $f>2e$.}
\end{cases}
\]
\item[(2)] If $E/F$ is ramified, then
\[
\frac{\mathrm{Vol}(V_N)} {\mathrm{Vol}(\frko_{E,f})}
=
\begin{cases}
2q^{-N+f} & \text{ if $0\leq f < \left[\frac{d+1}2\right]$,} 
\\ \noalign{\vskip 5pt}
q^{-N+[\frac f2+\frac d4]} & \text{ if $\left[\frac {d+1}2\right] \leq f \leq 2e-\left[ \frac d2\right]$,} 
\\ \noalign{\vskip 5pt}
2q^{-N+e} & \text{ if $f> 2e-\left[\frac d2\right]$.}
\end{cases}
\]
\end{itemize}
\end{lemma}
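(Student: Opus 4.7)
The plan is to exploit that, for $N$ sufficiently large, $V_N$ is simply the preimage of $1+\frkp^N$ under the norm map $\caln:\frko_{E,f}^\times\to\frko^\times$. Indeed $V_N\subseteq \frko_{E,f}^\times$ since $\caln(\alp)\equiv 1\bmod\vpi$ forces $\alp$ to be a unit, and a Hensel-type argument shows $1+\frkp^N\subseteq \caln(\frko_{E,f}^\times)$ for large $N$. Taking indices gives
\[
[\frko_{E,f}^\times:V_N]=\frac{[\frko^\times:1+\frkp^N]}{[\frko^\times:\caln(\frko_{E,f}^\times)]}=\frac{(q-1)q^{N-1}}{[\frko^\times:\caln(\frko_{E,f}^\times)]},
\]
hence
\[
\frac{\mathrm{Vol}(V_N)}{\mathrm{Vol}(\frko_{E,f})}=\frac{\mathrm{Vol}(\frko_{E,f}^\times)}{\mathrm{Vol}(\frko_{E,f})}\cdot \frac{[\frko^\times:\caln(\frko_{E,f}^\times)]}{(q-1)q^{N-1}},
\]
reducing the lemma to two separate tasks: the density of units in $\frko_{E,f}$ and the index $[\frko^\times:\caln(\frko_{E,f}^\times)]$.

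The first task is elementary: the density equals $1-q^{-1}$ whenever $f\geq 1$ (as $\frko_{E,f}$ is then local with residue field $\frkk$) or $E/F$ is ramified; for $f=0$ with $E/F$ unramified it equals $1-q^{-2}$ if $E$ is a field and $(1-q^{-1})^2$ if $E=F\times F$. Combined with the standard fact $\caln(\frko_E^\times)=\frko^\times$ when $E/F$ is unramified, this already yields the $f=0$ case of (1). For the index, pick $\ome$ generating $\frko_E$ over $\frko$ with minimal polynomial $x^2-Tx+N_0$; then $\frko_{E,f}=\frko\oplus\vpi^f\ome\frko$ and
\[
\caln(\frko_{E,f}^\times)=\frko^{\times 2}\cdot\{1+T\alp+N_0\alp^2:\alp\in\vpi^f\frko\}.
\]

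In the unramified case we may choose $\ome$ so that $T\in\frko^\times$. Then the linear term $T\alp$ sweeps out $\vpi^f\frko$ and a Hensel argument absorbs the quadratic correction, so $\caln(\frko_{E,f}^\times)=\frko^{\times 2}(1+\frkp^f)$ for $f\geq 1$; applying Lemma \ref{lem:6.3} directly produces the remaining parts of (1). In the ramified case we take $\ome$ to be a uniformizer of $\frko_E$, so $\ord(N_0)=1$, and we normalize so that $\ord(T)=d/2$ when $d$ is even and $T=0$ when $d=2e+1$. The valuations $\ord(T\alp)\geq f+d/2$ and $\ord(N_0\alp^2)\geq 2f+1$ control which term dominates, giving the three regimes of (2): for $f<[(d+1)/2]$ the quadratic term dominates and the image lies in an index-$2$ subgroup of $\frko^{\times 2}(1+\frkp^{2f+1})$ (so Lemma \ref{lem:6.3} contributes $q^f$, doubled to $2q^f$ by the extra class); for $[(d+1)/2]\leq f\leq 2e-[d/2]$ the linear term $T\alp$ dominates and $\caln(\frko_{E,f}^\times)=\frko^{\times 2}(1+\frkp^{f+d/2})$, yielding index $q^{[f/2+d/4]}$; for $f>2e-[d/2]$ the image saturates inside $\frko^{\times 2}(1+\frkp^{2e+1})$ and Lemma \ref{lem:6.3} contributes $2q^e$.

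The main obstacle is the first regime of the ramified case, where the image sits as an index-$2$ subgroup of $\frko^{\times 2}(1+\frkp^{2f+1})$ rather than the full subgroup. Since squaring is bijective on $\frkk$ in residue characteristic $2$, the quadratic contribution $N_0\alp^2$ produces arbitrary residues in one prescribed coset modulo $\frko^{\times 2}$, and the extra factor of $2$ must be tracked carefully and verified to be consistent at the boundaries $f=[(d+1)/2]$ and $f=2e-[d/2]$. A parallel coset analysis, in spirit similar to the splitting $W_N=W_N'\cup\frac{\bar\ome}{\ome}\overline{W_N'}$ used in Lemma \ref{lem:6.4}, identifies which class of $\frko^\times/\frko^{\times 2}$ is hit, after which Lemma \ref{lem:6.3} converts everything to the explicit powers of $q$ in the statement.
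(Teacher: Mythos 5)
Your overall reduction is the same as the paper's: express $V_N=\caln^{-1}(1+\frkp^N)$, observe that $1+\frkp^N\subset\caln(\frko_{E,f}^\times)$ for $N\gg 0$, and thereby reduce the lemma to computing $\mathrm{Vol}(\frko_{E,f}^\times)$ together with the index $[\frko^\times:\caln(\frko_{E,f}^\times)]$, the latter fed through Lemma~\ref{lem:6.3} once $\caln(\frko_{E,f}^\times)=\frko^{\times 2}\caln(1+\vpi^f\frko_E)$ is understood. Where you diverge is in how you identify $\caln(1+\vpi^f\frko_E)$. The paper quotes Serre (\emph{Local Fields}, p.~85, Cor.~3) in the range $f\geq\left[\frac{d+1}{2}\right]$, giving $\caln(1+\vpi^f\frko_E)=1+\frkp^{f+[d/2]}$, and Shimura (\emph{Arithmetic of quadratic forms}, Lemma~21.13(v)) for $f<\left[\frac{d+1}{2}\right]$, giving $\caln(1+\vpi^f\frko_E)=(1+\frkp^{2f})\cap\caln(\frko_E^\times)$, and then finishes with an explicit index computation. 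You propose instead to read the norm group off the minimal polynomial $x^2-Tx+N_0$ of a generator of $\frko_E$. That is a legitimate and more self-contained route, and your treatment of the unramified case and the middle ramified range $\left[\frac{d+1}{2}\right]\leq f\leq 2e-\left[\frac{d}{2}\right]$ does go through essentially as you describe (the Hensel/implicit-function-theorem step $-a+\vpi^{f+1-g}ua^2=c_0$ being solvable for $a$ when $f\geq g$).

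The gap is in the first ramified regime $0\leq f<\left[\frac{d+1}{2}\right]$, and you largely acknowledge it. Two concrete points. First, your ``quadratic term dominates'' dichotomy does not align cleanly with the boundaries: writing $d=2g$, one has $\ord(T\alpha)=f+g$ and $\ord(N_0\alpha^2)=2f+1$, which tie at $f=g-1$; the quadratic term strictly dominates only for $f\leq g-2$, yet $f=g-1$ still belongs to the first regime. Second, the heart of the matter is that $\caln(\frko_{E,f}^\times)$ has index exactly $2$ in $\frko^{\times 2}(1+\frkp^{2f+1})$, equivalently $\mathrm{Vol}(\caln(\frko_{E,f}^\times))=\tfrac{1}{2}q^{-f}(1-q^{-1})$; your sketch (``the quadratic contribution produces arbitrary residues in one prescribed coset mod $\frko^{\times 2}$'') does not yet establish this. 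What the paper actually uses here is that $\caln(1+\vpi^f\frko_E)=(1+\frkp^{2f})\cap\caln(\frko_E^\times)$, combined with $1+\frkp^{d-1}\not\subset\caln(\frko_E^\times)$; since $2f\leq d-1$ in this range, the intersection has index exactly $2$ in $1+\frkp^{2f}$, and then a short computation with $\frko^{\times 2}\cap(1+\frkp^{2f})$ produces the factor of $2$. To push your direct route through, you would need to re-derive this index-$2$ statement from the explicit norm form --- essentially re-proving Shimura's Lemma~21.13(v) and the strictness bound --- which is exactly the coset analysis you flag but do not carry out.
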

\begin{proof}
We normalize the Haar measure of $E$ and $F$ by $\mathrm{Vol}(\frko_E)=\mathrm{Vol}(\frko)=1$.
Since $N$ is sufficiently large, $\caln(\frko_{E,f}^\times)\supset 1+\frkp^N$.
Then we have
\[
[\frko_{E,f}^\times :V_N]=[\caln(\frko_{E,f}^\times): 1+\frkp^N].
\]
We have
\begin{align*}
\frac{\mathrm{Vol}(V_N)} {\mathrm{Vol}(\frko_{E,f})}
=&
\frac{\mathrm{Vol}(\frko_{E,f}^\times)} {\mathrm{Vol}(\frko_{E,f})}
\frac{\mathrm{Vol}(1+\frkp^N)} {\mathrm{Vol}(\caln(\frko^\times_{E,f}))}
\\
=&
q^{-N+f}
\frac{\mathrm{Vol}(\frko_{E,f}^\times)}{\mathrm{Vol}(\caln(\frko^\times_{E,f}))}.
\end{align*}
It is easily seen that
\[
\mathrm{Vol}(\frko_{E,f}^\times)
=
\begin{cases}
(1-q^{-1})(1-\xi q^{-1}) & \text{ if $f=0$, } \\

q^{-f} (1-q^{-1}) & \text{ if $f>0$. } 
\end{cases}
\]
Thus it is enough to calculate $\mathrm{Vol}(\caln(\frko_{E,f}^\times))$.
If $f=0$, then 
\[
[\frko^\times :\caln(\frko_E^\times)]=
\begin{cases}
1 & \text{ if $E/F$ is unramified, } \\
2 & \text{ if $E/F$ is ramified.}
\end{cases}
\]
This settles the case $f=0$.
Suppose that $f>0$.
Then $\frko_{E,f}^\times=\frko^\times (1+\vpi^f \frko_E)$ and so
\[
\caln(\frko_{E,f}^\times)=\frko^{\times 2} \cdot\caln(1+\vpi^f \frko_E).
\]
If $E/F$ is unramified, then $\caln(1+\vpi^f\frko_E)=1+\frkp^f$.
By Lemma \ref{lem:6.3}, we have
\[
\mathrm{Vol}
(\caln(\frko_{E,f}^\times))
=
\begin{cases}
(1-q^{-1}) q^{-\left[\frac f2\right]} &\text{ if $0< f\leq 2e$,} \\
\frac 12 (1-q^{-1}) q^{-e} &\text{ if $f> 2e$,} 
\end{cases}
\]
and so
\[
\frac{\mathrm{Vol}(V_N)} {\mathrm{Vol}(\frko_{E,f})}
=
\begin{cases}
q^{-N+[f/2]} & \text{ if $0 < f \leq 2e$,} 
\\ \noalign{\vskip 2pt}
2q^{-N+e} & \text{ if $f>2e$.}
\end{cases}
\]
Now suppose $F/F$ is ramified.
By Serre \cite{serre}, p.85, Corollary 3, we have 
\[
\caln(1+\vpi^f \frko_E)=1+\frkp^{f+\left[\frac d2\right]}
\]
for $f\geq \left[ \frac{d+1} 2\right]$.
It follows that
\[
\caln(\frko_{E,f}^\times)=\frko^{\times 2}(1+\frkp^{f+\left[\frac d2\right]})
\]
for $f\geq \left[ \frac{d+1} 2\right]$.
By Lemma \ref{lem:6.3}, we have
\[
\mathrm{Vol}(\caln(\frko_{E,f}^\times))
=
\begin{cases}
(1-q^{-1})q^{-[\frac f2+\frac d4]} & \text{ if $\left[\frac {d+1}2\right] \leq f \leq 2e-\left[ \frac d2\right]$,} 
\\ \noalign{\vskip 5pt}
\frac 12 (1-q^{-1})q^{-e} & \text{ if $f> 2e-\left[\frac d2\right]$,}\end{cases}
\]
and so
\[
\frac{\mathrm{Vol}(V_N)} {\mathrm{Vol}(\frko_{E,f})}
=
\begin{cases}
q^{-N+[\frac f2+\frac d4]} & \text{ if $\left[\frac {d+1}2\right] \leq f \leq 2e-\left[ \frac d2\right]$,} 
\\ \noalign{\vskip 5pt}
2q^{-N+e} & \text{ if $f> 2e-\left[\frac d2\right]$.}
\end{cases}
\]
Finally, suppose that $0<f<\left[ \frac{d+1} 2\right]$.
In this case, by Shimura \cite{shimura}, Lemma 21.13 (v), we have
\[
\caln(1+\vpi^f \frko_E)=(1+\frkp^{2f})\cap \caln(\frko_E^\times).
\]
Since $1+\frkp^{d-1}\not\subset\caln(\frko_E^\times)$, we have
$\caln(1+\vpi^f \frko_E)\subsetneqq 1+\frkp^{2f}$.
Hence
\[
\mathrm{Vol}(\caln(1+\vpi^f\frko_E))=\frac 12 \mathrm{Vol}(1+\frkp^{2f})
=\frac 12 q^{-2f}.
\]
On the other hand, we have
\[
\caln(1+\vpi^f \frko_E)\cap \frko^{\times 2}=(1+\frkp^{2f})\cap \caln(\frko_E^\times)\cap \frko^{\times 2}=(1+\frkp^{2f})\cap \frko^{\times 2}.
\]
Hence
\begin{align*}
[\frko^{\times 2}:\caln(1+\vpi^f \frko_E)\cap \frko^{\times 2}]
=&
[\frko^{\times 2}:(1+\frkp^{2f})\cap \frko^{\times 2}] \\
=&
[\frko^{\times 2}(1+\frkp^{2f}): 1+\frkp^{2f}] \\
=&
\frac{[\frko^\times: 1+\frkp^{2f}]}
{[\frko^\times:\frko^{\times 2}(1+\frkp^{2f})]}
\\
=& q^f(1-q^{-1}).
\end{align*}
It follows that
\begin{align*}
\mathrm{Vol}(\caln(\frko_{E,f}^\times))=&
\mathrm{Vol}(\caln(1+\vpi^f\frko_E))
[\frko^{\times 2}\cdot\caln(1+\vpi^f \frko_E):\caln(1+\vpi^f \frko_E)]
\\
=&\frac 12 q^{-2f}[\frko^{\times 2}:\caln(1+\vpi^f \frko_E)\cap \frko^{\times 2}] \\
=&\frac 12 q^{-f}(1-q^{-1}).
\end{align*}
Hence we have
\[
\frac{\mathrm{Vol}(V_N)} {\mathrm{Vol}(\frko_{E,f})}
=
2q^{-N+f}  
\]
in this case.
This proves the lemma.
\end{proof}
By Lemma \ref{lem:6.4} and Lemma \ref{lem:6.5}, we obtain the following formula.

\begin{proposition} 
\label{prop:A6}
\begin{itemize}
\item[(1)] 
Assume that $E$ is unramified.
Then the local density of $(L, Q)=(\frko_{E,f}, \caln)$ is given by
\[
\beta(L)=
\begin{cases}
1-\xi q^{-1} & \text{ if $f=0$,} 
\\ \noalign{\vskip 3pt}
q^{[f/2]+2f} & \text{ if $0 < f \leq 2e$,} 
\\ \noalign{\vskip 2pt}
2q^{e+2f} & \text{ if $f>2e$.}
\end{cases}
\]
\item[(2)] 
Assume that $E$ is ramified and that $d=2g\leq 2e$.
Then the local density of $(L, Q)=(\frko_{E,f}, \caln)$ is given by
\[
\beta(L)=
\begin{cases}
2q^{3f+2g} & \text{ if $0\leq f < g$,} 
\\ \noalign{\vskip 5pt}
q^{[\frac f2+\frac g2]+2f+2g} & \text{ if $g \leq f \leq 2e-g$,} 
\\ \noalign{\vskip 5pt}
2q^{2f+e+2g} & \text{ if $f> 2e-g$.}
\end{cases}
\]
\item[(3)] 
Assume that $E$ is ramified and that $d=2e+1$.
Then the local density of $(L, Q)=(\frko_{E,f}, \caln)$ is given by
\[
\beta(L)=
\begin{cases}
2q^{3f+2e+1} & \text{ if $0\leq f < e+1$,} 
\\ \noalign{\vskip 5pt}
2q^{2f+3e+1} & \text{ if $f\geq e+1$.}
\end{cases}
\]
\end{itemize}
\end{proposition}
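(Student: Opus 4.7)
The plan is simply to combine the integral formula for $\beta(L)$ established in the paragraph preceding Lemma \ref{lem:6.4} with the two volume computations in Lemmas \ref{lem:6.4} and \ref{lem:6.5}.

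First, I would substitute the conclusion of Lemma \ref{lem:6.4} into
\[
\beta(L)=\frac 12 q^{3N}\,\frac{\mathrm{Vol}(V_N)}{\mathrm{Vol}(\frko_{E,f})}\,\frac{\mathrm{Vol}(W_N)}{\mathrm{Vol}((\vpi^f\ome)^{-1}\frko_{E,f})},
\]
obtaining the clean intermediate identity
\[
\beta(L)=q^{N+2f+d}\cdot\frac{\mathrm{Vol}(V_N)}{\mathrm{Vol}(\frko_{E,f})}.
\]
All the $N$-dependence must now cancel when I insert the values from Lemma \ref{lem:6.5}, which is a good consistency check; in each case that lemma contributes a factor $q^{-N}$ which wipes out the $q^N$ above.

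Next I would handle the three cases in turn. For Case (1), when $E/F$ is unramified ($d=0$), the three subcases of Lemma \ref{lem:6.5}(1) give $\beta(L)=1-\xi q^{-1}$, $q^{[f/2]+2f}$ and $2q^{e+2f}$ respectively, matching the stated formula verbatim. For Case (2), when $E/F$ is ramified with $d=2g\leq 2e$, I would first note the identifications $\left[\tfrac{d+1}{2}\right]=g$, $\left[\tfrac{d}{2}\right]=g$ and $\left[\tfrac{f}{2}+\tfrac{d}{4}\right]=\left[\tfrac{f}{2}+\tfrac{g}{2}\right]$, so that the three ranges of Lemma \ref{lem:6.5}(2) become $0\leq f<g$, $g\leq f\leq 2e-g$ and $f>2e-g$; multiplying by $q^{N+2f+2g}$ produces exactly the three expressions in (2).

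Finally, for Case (3) with $d=2e+1$, I would observe that $\left[\tfrac{d+1}{2}\right]=e+1$ while $2e-\left[\tfrac{d}{2}\right]=e$, so the middle range in Lemma \ref{lem:6.5}(2) is empty; only the two extreme subcases survive, and multiplying each by $q^{N+2f+2e+1}$ yields $2q^{3f+2e+1}$ for $0\leq f<e+1$ and $2q^{2f+3e+1}$ for $f\geq e+1$, matching (3).

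There is no real obstacle here, as all of the substantive work was carried out in Lemmas \ref{lem:6.4} and \ref{lem:6.5}; the only thing to be slightly careful about is the bookkeeping of the floor functions $\left[\tfrac{d+1}{2}\right]$, $\left[\tfrac{d}{2}\right]$, $\left[\tfrac{f}{2}+\tfrac{d}{4}\right]$ when $d$ is re-expressed in terms of $g$ or $2e+1$, and the verification that in Case (3) the intermediate range collapses.
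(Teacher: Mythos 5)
Your derivation is correct and is essentially identical to the paper's approach: the paper simply states that Proposition \ref{prop:A6} follows by substituting Lemmas \ref{lem:6.4} and \ref{lem:6.5} into the displayed formula for $\beta(L)$ preceding Lemma \ref{lem:6.4}, and your proposal carries out exactly that substitution, including the correct identifications $[\tfrac{d+1}{2}]=g$, $[\tfrac{d}{2}]=g$, $[\tfrac{f}{2}+\tfrac{d}{4}]=[\tfrac{f}{2}+\tfrac{g}{2}]$ in case (2) and the observation that the middle range is empty when $d=2e+1$.
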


Next, we calculate $\GK(L\oplus -L)$.
\begin{proposition} 
\label{prop:A7}
Suppose that $(L, Q)=(\frko_{E,f}, \caln)$.
\begin{itemize}
\item[(1)] 
Assume that $E$ is unramified.
Then $L\oplus -L$ is equivalent to a reduced form of GK type $(\ua, \sig)$, where 
\[
(\ua, \sig)=
\begin{cases}
((0,0,0,0), \, (12)(34)) & \text{ if $f=0$,} 
\\ \noalign{\vskip 3pt}
((0,f,f,2f),\,  (14)(23)) & \text{ if $0 < f < 2e$,} 
\\ \noalign{\vskip 2pt}
((0, 2e, 2f-2e, 2f),\,  (12)(34)) & \text{ if $f\geq 2e$.}
\end{cases}
\]
\item[(2)] 
Assume that $E$ is ramified and that $d\leq 2e$.
Put $g=d/2$.
Then $L\oplus -L$ is equivalent to a reduced form of GK type $(\ua, \sig)$, where 
\[
(\ua, \sig)=
\begin{cases}
((0, 2f+1, 2g-1, 2g+2f),\, (14)(23)) & \text{ if $0\leq f \leq g-1$,} 
\\ \noalign{\vskip 5pt}
((0, g+f, g+f, 2g+2f),\, (14)(23)) & \text{ if $g\leq f < 2e-g$,} 
\\ \noalign{\vskip 5pt}
((0, 2e, 2g+2f-2e, 2g+2f),\, (12)(34)) & \text{ if $ f \geq 2e-g$.} 
\end{cases}
\]
\item[(3)] 
Assume that $E$ is ramified and that $d=2e+1$.
Then $L\oplus -L$ is equivalent to a reduced form of GK type $(\ua, \sig)$, where 
\[
(\ua, \sig)=
\begin{cases}
((0, 2f+1, 2e, 2e+2f+1),\, (13)(24)) & \text{ if $0\leq f < e$,} 
\\ \noalign{\vskip 5pt}
((0, 2e, 2f+1, 2e+2f+1),\, (12)(34)) & \text{ if $ f \geq e$.} 
\end{cases}
\]

\end{itemize}
\end{proposition}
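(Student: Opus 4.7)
The plan is to prove Proposition \ref{prop:A7} by exhibiting, for each subcase, an explicit $\frko$-basis of $L\oplus -L$ with respect to which the Gram matrix is a reduced form of the claimed GK-type $(\underline{a},\sigma)$. Once such a basis is produced, Theorem \ref{thm5.1} (reduced forms are optimal) immediately yields $\GK(L\oplus -L)=\underline{a}$, and by Theorem \ref{thm:2.4} any other reduced form is equivalent under an admissible involution in the same equivalence class.

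By Proposition \ref{prop:6.1} we may take $(L,Q)=(\frko_{E,f},\caln)$; with respect to the $\frko$-basis $(1,\vpi^{f}\omega)$ of $\frko_{E,f}$, its Gram matrix is
\[
B=\begin{pmatrix} 1 & \tfrac{1}{2}\vpi^{f}\mathrm{tr}(\omega) \\ \tfrac{1}{2}\vpi^{f}\mathrm{tr}(\omega) & \vpi^{2f}\caln(\omega) \end{pmatrix},
\]
with $D_{B}=\vpi^{2f}(\omega-\bar\omega)^{2}$. Before constructing bases I would record a consistency check: since $D_{B\oplus -B}=16\det(B)^{2}$ is a square, $\xi_{B\oplus -B}=1$, so Definition \ref{def:0.3} and Theorem \ref{thm:2.1} give $|\GK(L\oplus -L)|=\ord(D_{B\oplus -B})=4f+2d$, which agrees with the sum of entries of every tuple $\underline{a}$ claimed in the statement. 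In each subcase the candidate basis is then built from the natural basis $(e_{1},e_{2})$ of $L$ and $(e_{3},e_{4})$ of $-L$. In the ``large $f$'' regimes where $\sigma=(12)(34)$, a mild shear of the form $e_{i}\mapsto e_{i}+\alpha e_{j}$ (with $\alpha\in\frko$ and $e_{j}$ from the opposite copy) redistributes scales so that the two $2\times 2$ diagonal blocks individually have the claimed GK invariants. In the ``small/medium $f$'' regimes where $\sigma=(14)(23)$, respectively $(13)(24)$ in case (3), the basis instead mixes the two copies via combinations such as $e_{1}\pm e_{3}$ and $e_{2}\pm e_{4}$ followed by shears, arranging the off-diagonals at the $\sigma$-paired positions to acquire precisely the prescribed valuations.

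The main obstacle is the case-by-case verification of the three conditions of Definition \ref{def3.2}: the paired off-diagonals $2b_{i\sigma(i)}$ must have valuation exactly $(a_{i}+a_{\sigma(i)})/2$, the diagonals $b_{ii}$ for $i\in\calp^{-}(\sigma)\cup\calp^{0}(\sigma)$ must have valuation exactly $a_{i}$, and all remaining off-diagonals must have strictly larger valuation. These equalities are delicate at the boundary thresholds ($f=2e$ in (1); $f=g-1,g,2e-g$ in (2); $f=e$ in (3)), and the fine structure of $\omega$ differs between the three ramification regimes: in the ramified cases $\omega$ may be chosen as a uniformizer of $\frko_{E}$, and the valuation $\ord(\mathrm{tr}(\omega))$ distinguishes the tame case $d\leq 2e$ from the wildly ramified case $d=2e+1$. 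Handling all nine subcases uniformly, and in particular explaining why the involution changes from $(14)(23)$ to $(13)(24)$ precisely in the wildly ramified case, constitutes the bulk of the work.
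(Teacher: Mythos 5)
Your strategy---exhibit an explicit $\frko$-basis of $L\oplus -L$ whose Gram matrix is a reduced form of the claimed GK-type $(\ua,\sig)$, then invoke Theorem~\ref{thm5.1}---is exactly the paper's. The consistency check $|\GK(L\oplus -L)|=4f+2d$, obtained from $\xi_{B\oplus -B}=1$ together with Definition~\ref{def:0.3} and Theorem~\ref{thm:2.1}, is a worthwhile sanity check the paper does not record, but it only constrains the sum of the four entries and so cannot distinguish, for instance, $(0,f,f,2f)$ from $(0,0,2f,2f)$.

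The genuine gap is that the proof is never actually carried out. You describe the base changes only qualitatively (``a mild shear'', ``combinations such as $e_1\pm e_3$'') and then remark that verifying Definition~\ref{def3.2} in all the subcases ``constitutes the bulk of the work''---but you do not do that work. The paper writes down explicit transition matrices in every case and checks the reduced-form conditions entry by entry. A decisive ingredient you do not supply is the precise normalization of $\omega$: the paper takes $\mathrm{tr}(\omega)=1$ when $E/F$ is unramified; $\vpi_E=\vpi^g(-1+\sqrt{\vep})/2$ with $\ord(\vep-1)=2e-2g+1$, hence $\mathrm{tr}(\vpi_E)=-\vpi^g$, when $d=2g\leq 2e$; and $\vpi_E=\sqrt{-\vpi u}$, hence $\mathrm{tr}(\vpi_E)=0$, when $d=2e+1$. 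These choices pin down the exact valuations of the off-diagonal entries after the change of basis, which is what forces $\ord(2b_{i\sigma(i)})=(a_i+a_{\sigma(i)})/2$ to hold with equality (not just $\geq$), what produces the thresholds $f=2e$, $f=g$, $f=2e-g$, $f=e$, and what explains why $\sig$ becomes $(13)(24)$ exactly when $d=2e+1$: in that case $B$ is diagonal, so the two copies of $L$ are paired diagonally rather than antidiagonally after the transformation. Without these concrete valuation computations, the proposal is a plan for a proof rather than a proof.
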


\begin{proof}
Let $B\in\calh_2(\frko)$ be a half-integral symmetric matrix associated to $(L, Q)$.
First we consider the case $E/F$ is unramified.
If $f=0$, then we have $B\perp -B\sim H\oplus H$, where $H$ is the hyperbolic plane $\begin{pmatrix} 0 & 1/2 \\ 1/2 & 0 \end{pmatrix}$.
In fact, it is easy to see that $B\perp -B$ expresses $H$, and so $B\perp -B\sim H\perp K$ for some $K\in\calh_2(\frko)$.
Since $-\det (2K)\in \frko^{\times 2}$, we have $K\sim H$.
This settles the case $f=0$ of (1).
Next, we consider the case $0<f$.
Let $\{1, \ome\}$ be a basis for $\frko_E$ as an $\frko$-module.
Then, since $F$ is dyadic, we have $\mathrm{tr}(\ome)\in\frko^\times$.
By multiplying $\ome$ by some unit, we may assume $\mathrm{tr}(\ome)=1$.
By using this basis, the half-integral symmetric matrix associated to $(\frko_E, \caln)$ is of the form $\begin{pmatrix} 1 & 1/2 \\ 1/2 & u\end{pmatrix}$ for some $u\in \frko$.
Since $\{1, \vpi^f\ome\}$ is a basis of $\frko_{E, f}$ over $\frko$, we may assume $B=\begin{pmatrix} 1 & \vpi^f/2 \\ \vpi^f/2 & u \vpi^{2f} \end{pmatrix}$.
If $0<f<2e$, we have
\[
B\perp -B=
\bdm
{\begin{matrix} 1 & \vpi^f/2 \\ \vpi^f/2 & u \vpi^{2f} \end{matrix}}
{\begin{matrix} 0 \phantom{aaaa}& 0 \\ 0\phantom{aaaa} & 0 \end{matrix}}
{\begin{matrix} 0 \phantom{aaa}& 0 \\ 0\phantom{aaa} & 0 \end{matrix}}
{\begin{matrix} -1 & -\vpi^f/2 \\ -\vpi^f/2 & -u \vpi^{2f} \end{matrix}}
\xrightarrow{\left(\begin{smallmatrix} 1 & 0 & 1 & 0 \\  0 & 1 & 0 & 1 \\  0 & 0 & 1 & 0 \\  0 & 0 & 0 & 1 \end{smallmatrix}\right)}
\bdm
{\begin{matrix} 1 & \vpi^f/2 \\ \vpi^f/2 & u \vpi^{2f} \end{matrix}}
{\begin{matrix} 1 & \vpi^f/2 \\ \vpi^f/2 & u \vpi^{2f} \end{matrix}}
{\begin{matrix} 1 & \vpi^f/2 \\ \vpi^f/2 & u \vpi^{2f} \end{matrix}}
{\begin{matrix} 0 \phantom{aaa}& 0 \\ 0\phantom{aaa} & 0 \end{matrix}}.
\]
Here, $X\xrightarrow{A} Y$ means $Y=X[A]$.
Since the last matrix is a reduced form of GK type $((0,f,f,2f),\,  (14)(23))$, we have proved the case $0<f<2e$ of (1).
Next, suppose $f\geq 2e$.
Then we have
\[
B=\begin{pmatrix} 1 & \vpi^f/2 \\ \vpi^f/2 & u \vpi^{2f} \end{pmatrix}
\xrightarrow{\left(\begin{smallmatrix} 1 & -\vpi^f/2 \\ 0 & 1\end{smallmatrix}\right)}
\begin{pmatrix} 1 & 0 \\ 0 & v\vpi^{2f-2e} \end{pmatrix}.
\]
Here, $v=\vpi^{2e}(4u-1)/4\in\frko^\times$.
Then we have
\[
B\perp -B 
\sim
\bdm
{\begin{matrix} 1 & \phantom{-}0 \\ 0 & -1 \end{matrix}}
{\begin{matrix} 0\phantom{aaaaaa} & 0 \\ 0\phantom{aaaaaa} & 0 \end{matrix}}
{\begin{matrix} 0 & \phantom{-}0 \\ 0 & \phantom{-}0 \end{matrix}}
{\begin{matrix} v\vpi^{2f-2e} & 0  \\ 0 & -v\vpi^{2f-2e}  \end{matrix}}
\xrightarrow{\left(\begin{smallmatrix} 1 & 1 & 0 & 0 \\  0 & 1 & 0 & 0 \\  0 & 0 & 1 & 1 \\  0 & 0 & 0 & 1 \end{smallmatrix}\right)}
\bdm
{\begin{matrix} 1 & 1 \\ 1 & 0 \end{matrix}}
{\begin{matrix} 0\phantom{aaaaaa} & 0 \\ 0\phantom{aaaaaa} & 0 \end{matrix}}
{\begin{matrix} 0 & 0 \\ 0 & 0 \end{matrix}}
{\begin{matrix} v\vpi^{2f-2e} & v\vpi^{2f-2e}  \\ v\vpi^{2f-2e}  & 0 \end{matrix}}.
\]
It is easy to check that the lase matrix is a reduced form of GK type $(0, 2e, 2f-2e, 2f),\,  (12)(34))$.
Thus we have proved the last case of (1).

Suppose that $E/F$ is ramified and $d=2g\leq 2e$.
In this case, $E$ is generated by an element $\vpi_E=\vpi^g (-1+\sqrt{\vep})/2$, such that $\ord(\vep-1)=2e-2g+1$.
Then $\{1, \vpi^f\vpi_E\}$ is a basis of $\frko_{E, f}$.
By using this basis, $B=\begin{pmatrix} 1 & \vpi^{g+f}/2 \\ \vpi^{g+f}/2 & u\vpi^{2f+1} \end{pmatrix}$.
Here, $u=\vpi^{2g-1}(1-\vep)/4\in\frko^\times$.
If $f<2e-g$, we have
\begin{align*}
B\perp -B 
\sim&
\bdm
{\begin{matrix} 1 & \vpi^{g+f}/2 \\ \vpi^{g+f}/2 & u \vpi^{2f+1} \end{matrix}}
{\begin{matrix} 0 \phantom{aaaa}& 0 \\ 0\phantom{aaaa} & 0 \end{matrix}}
{\begin{matrix} 0 \phantom{aaa}& 0 \\ 0\phantom{aaa} & 0 \end{matrix}}
{\begin{matrix} -1 & -\vpi^{g+f}/2 \\ -\vpi^{g+f}/2 & -u \vpi^{2f+1} \end{matrix}}
\\
\xrightarrow{\left(\begin{smallmatrix} 1 & 0 & 1 & 0 \\  0 & 1 & 0 & 1 \\  0 & 0 & 1 & 0 \\  0 & 0 & 0 & 1 \end{smallmatrix}\right)}
&
\bdm
{\begin{matrix} 1 & \vpi^{g+f}/2 \\ \vpi^{g+f}/2 & u \vpi^{2f+1} \end{matrix}}
{\begin{matrix} 1 & \vpi^{g+f}/2 \\ \vpi^{g+f}/2 & u \vpi^{2f+1} \end{matrix}}
{\begin{matrix} 1 & \vpi^{g+f}/2 \\ \vpi^{g+f}/2 & u \vpi^{2f+1} \end{matrix}}
{\begin{matrix} 0 \phantom{aaaaa}& 0 \\ 0\phantom{aaaaa} & 0 \end{matrix}}
\\
\xrightarrow{\left(\begin{smallmatrix} 1 & 0 & 0 & 0 \\  0 & 1 & 0 & 0 \\  0 & 0 & 1 & 0 \\  0 & -1 & 0 & 1 \end{smallmatrix}\right)}
&
\bdm
{\begin{matrix} \phantom{aaa}1 & \phantom{a}0 \\ \phantom{aaa}0 & \phantom{aa}-u \vpi^{2f+1} \end{matrix}}
{\begin{matrix} 1 & \vpi^{g+f}/2 \\ \vpi^{g+f}/2 & u \vpi^{2f+1} \end{matrix}}
{\begin{matrix} 1\phantom{a} & \vpi^{g+f}/2 \\ \vpi^{g+f}/2 & u \vpi^{2f+1} \end{matrix}}
{\begin{matrix} 0 \phantom{aaaaa}& 0 \\ 0\phantom{aaaaa} & 0 \end{matrix}}.
\end{align*}
The last matrix is a reduced form of GK type 
$((0, 2f+1, 2g-1, 2g+2f),\, (14)(23))$ if $0\leq f \leq g-1$,
and a reduced form of GK type  $((0, g+f, g+f, 2g+2f),\, (14)(23))$, if 
$g\leq f < 2e-g$.
This proves the first and the second case of (2).
Suppose that $f\geq 2e-g$.
Then we have
\[
B=\begin{pmatrix} 1 & \vpi^{g+f}/2 \\ \vpi^{g+f}/2 & u\vpi^{2f+1} \end{pmatrix}
\xrightarrow{\left(\begin{smallmatrix} 1 & -\vpi^{g+f}/2 \\ 0 & 1\end{smallmatrix}\right)}
\begin{pmatrix} 1 & 0 \\ 0 & v \vpi^{2g+2f-2e} \end{pmatrix}.
\]
Here, $v=-\vpi^{2e}\vep/4\in\frko^\times$.
In this case, by a similar calculation as before, we have
\[
B\perp -B 
\sim
\bdm
{\begin{matrix} 1 & 1 \\ 1 & 0 \end{matrix}}
{\begin{matrix} 0\phantom{aaaamaaa} & 0 \\ 0\phantom{aaaamaaa} & 0 \end{matrix}}
{\begin{matrix} 0 & 0 \\ 0 & 0 \end{matrix}}
{\begin{matrix} v\vpi^{2g+2f-2e} & v\vpi^{2g+2f-2e}  \\ v\vpi^{2g+2f-2e}  & 0 \end{matrix}}.
\]
This matrix is a reduced form of GK type $((0, 2e, 2g+2f-2e, 2g+2f),\, (12)(34))$, and this settles the last case of (2).

Finally, suppose that $E/F$ is ramified and $d=2e+1$.
In this case, the quadratic extension $E/F$ is generated by $\vpi_E=\sqrt{-\vpi u}$ for some unit $u\in\frko^\times$.
Then $\{1, \vpi^f\vpi_E\}$ is a $\frko$-basis of $\frko_{E, f}$. 
By using this basis,  we may assume 
$B=\begin{pmatrix} 1 & 0 \\ 0 & u \vpi^{2f+1} \end{pmatrix}$.
Then, by a similar calculation as before, we have
\[
B\perp -B 
\sim
\begin{cases}
\bdm
{\begin{matrix} 1 & 0 \\ 0 & u \vpi^{2f+1} \end{matrix}}
{\begin{matrix} 1 & 0 \\ 0 & u \vpi^{2f+1} \end{matrix}}
{\begin{matrix} 1 & 0 \\ 0 & u \vpi^{2f+1} \end{matrix}}
{\begin{matrix} 0 \phantom{am} & 0\phantom{am} \\ 0\phantom{am} & 0\phantom{am} \end{matrix}}
& \text{ if $f<e$, } \\
\noalign{\vskip 5pt} 
\bdm
{\begin{matrix} 1 & 1 \\ 1 & 0 \end{matrix}}
{\begin{matrix} 0\phantom{aamaa} & 0 \\ 0\phantom{aamaa} & 0 \end{matrix}}
{\begin{matrix} 0 & 0 \\ 0 & 0 \end{matrix}}
{\begin{matrix} u\vpi^{2f+1} & u\vpi^{2f+1}  \\ u\vpi^{2f+1}  & 0 \end{matrix}}
& \text{ if $f\geq e$. } 
\end{cases}
\]
If $0\leq f < e$, then the first matrix is a reduced form of GK type 
$((0, 2f+1, 2e, 2e+2f+1),\, (13)(24))$.
If $ f \geq e$, then the second matrix is a reduced form of GK type 
$((0, 2e, 2f+1, 2e+2f+1),\, (12)(34))$.
Hence we have proved the proposition.
\end{proof}

By Theorem \ref{thm5.1} (Corollary 5.1 of \cite{IK1}), we obtain the following proposition.
\begin{proposition} 
\label{prop:A.8}
Suppose that $(L, Q)=(\frko_{E,f}, \caln)$.
\begin{itemize}
\item[(1)] 
Assume that $E$ is unramified.
Then we have
\[
\GK(L\oplus -L)=
\begin{cases}
(0,0,0,0) & \text{ if $f=0$,} 
\\ \noalign{\vskip 3pt}
(0,f,f,2f) & \text{ if $0 < f < 2e$,} 
\\ \noalign{\vskip 2pt}
(0, 2e, 2f-2e, 2f) & \text{ if $f\geq 2e$.}
\end{cases}
\]
\item[(2)] 
Assume that $E$ is ramified and that $d=2g\leq 2e$.
Then we have
\[
\GK(L\oplus -L)=
\begin{cases}
(0, 2f+1, 2g-1, 2g+2f) & \text{ if $0\leq f \leq g-1$,} 
\\ \noalign{\vskip 5pt}
(0, g+f, g+f, 2g+2f) & \text{ if $g\leq f < 2e-g$,} 
\\ \noalign{\vskip 5pt}
(0, 2e, 2g+2f-2e, 2g+2f) & \text{ if $ f \geq 2e-g$.} 
\end{cases}
\]
\item[(3)] 
Assume that $E$ is ramified and that $d=2e+1$.
Then we have
\[
\GK(L\oplus -L)=
\begin{cases}
(0, 2f+1, 2e, 2e+2f+1) & \text{ if $0\leq f < e$,} 
\\ \noalign{\vskip 5pt}
(0, 2e, 2f+1, 2e+2f+1) & \text{ if $ f \geq e$.} 
\end{cases}
\]

\end{itemize}
\end{proposition}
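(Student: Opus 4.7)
The plan is that Proposition~\ref{prop:A.8} is essentially a corollary of Proposition~\ref{prop:A7} combined with Theorem~\ref{thm5.1}. In Proposition~\ref{prop:A7}, for each of the three regimes (unramified $E/F$; ramified $E/F$ with $d=2g\leq 2e$; ramified $E/F$ with $d=2e+1$), and for each sub-regime for the conductor exponent $f$, an explicit $4\times 4$ half-integral symmetric matrix equivalent to $B\perp -B$ (where $B$ represents $(\frko_{E,f}, \caln)$) has already been exhibited, together with a GK type $(\underline{a},\sigma)$ of which it is a reduced form. Hence my task is just to record the conclusion.

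First I would recall that by Theorem~\ref{thm5.1} (Corollary 5.1 of \cite{IK1}), whenever a matrix $B'\in\calhnd_n(\frko)$ is a reduced form of GK type $(\underline{a},\sigma)$, one has $\GK(B')=\underline{a}$. Second, I would note that $\GK$ depends only on the $\GL_n(\frko)$-equivalence class of the matrix (Definition~\ref{def:2.1}). Combining these two facts with the equivalences displayed in the proof of Proposition~\ref{prop:A7} gives
\[
\GK(L\oplus -L)=\GK(B\perp -B)=\underline{a},
\]
where $\underline{a}$ is the first component of the GK type $(\underline{a},\sigma)$ appearing in the corresponding case of Proposition~\ref{prop:A7}. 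Reading off $\underline{a}$ case-by-case then reproduces the three displayed lists in the statement of Proposition~\ref{prop:A.8} exactly.

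Since the entire content has been loaded into Proposition~\ref{prop:A7}, there is no genuine obstacle to this step; the only item to double-check is that the GK types listed in Proposition~\ref{prop:A7} do indeed satisfy Definition~\ref{def3.1} (admissibility of $\sigma$) and Definition~\ref{def3.2} (reducedness conditions on the block matrices). This is mechanical: for instance in case (1) with $0<f<2e$, the involution $(14)(23)$ on $\underline{a}=(0,f,f,2f)$ has $\mathcal{P}^0=\emptyset$, $\#(\mathcal{P}^\pm\cap I_s)\le 1$ for each block $I_s$, and the required valuations of the off-diagonal entries $\vpi^f/2$ are seen directly; similarly in the remaining cases. Given this verification (already implicit in the proof of Proposition~\ref{prop:A7}), the statement of Proposition~\ref{prop:A.8} follows immediately, concluding the argument.
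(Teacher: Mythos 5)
Your proposal is correct and matches the paper's own argument exactly: the paper states Proposition~\ref{prop:A.8} immediately after Proposition~\ref{prop:A7} with the one-line justification that it follows by Theorem~\ref{thm5.1} (Corollary 5.1 of \cite{IK1}), which is precisely the route you take. The extra verification of admissibility and reducedness you mention is already implicit in the proof of Proposition~\ref{prop:A7}, so nothing more is needed.
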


We shall give a Jordan splitting for $L=(\frko_{E,f}, \caln)$.
Let $L=\bigoplus L_i$ be a Jordan splitting such that $L_i$ is $i$-modular. 
Put $\mathrm{Jor}(L)=\{i\in\ZZ\,|\, L_i \text{ is nonzero.}\}$.
\begin{lemma}
\label{lem:A10}
\begin{itemize}
\item[(1)] 
Suppose that $E/F$ is unramified.
If $f<e$, then $\mathrm{Jor}(L)=\{f-e\}$ and $L$ is an indecomposable $(f-e)$-modular lattice.
If $f\geq e$, then $\mathrm{Jor}(L)=\{0, 2f-2e\}$ and $L\sim (1)\perp (u\vpi^{2f-2e})$, with $u\in\frko^\times$.
\item[(2)]
Suppose that $E/F$ is ramified and $d=2g\leq 2e$.
If $f<e-g$, then $\mathrm{Jor}(L)=\{f+g-e\}$ and $L$ is an indecomposable $(f+g-e)$-modular lattice.
If $f\geq e-g$, then  $\mathrm{Jor}(L)=\{0, 2f+2g-2e\}$ and $L\sim (1)\perp (u\vpi^{2f+2g-2e})$, with $u\in\frko^\times$.
\item[(3)]
Suppose that $E/F$ is ramified and $d=2e+1$.
In this case, $\mathrm{Jor}(L)=\{0, 2f+1\}$ and 
$L\sim (1)\perp (u\vpi^{2f+1})$, with $u\in\frko^\times$.
\end{itemize}
\end{lemma}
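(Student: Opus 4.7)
The plan is to read each Jordan splitting directly from the explicit Gram matrices of $(\frko_{E,f}, \caln)$ constructed in the proof of Proposition~\ref{prop:A7}, performing at most one elementary diagonalizing transformation. In every case the valuations of the three essentially distinct entries of the Gram matrix $B$ determine the scale $\bfs(L)$, and the type I versus type II dichotomy for unimodular binary lattices over a dyadic local ring (Theorem~2.4 of \cite{Cho}) governs whether $L$ is indecomposable.

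For case (1), pick $\ome\in\frko_E$ with $\frko_E=\frko+\frko\ome$ and $\mathrm{tr}(\ome)=1$, so that relative to the basis $\{1,\vpi^f\ome\}$ of $\frko_{E,f}$ the Gram matrix of $\caln$ is $B=\begin{pmatrix}1 & \vpi^f/2 \\ \vpi^f/2 & u\vpi^{2f}\end{pmatrix}$ with $u=\caln(\ome)\in\frko^\times$; its three entries have valuations $0$, $f-e$, $2f$. If $f<e$, the off-diagonal entry strictly dominates, so $\bfs(L)=\frkp^{f-e}$, and rescaling by $\vpi^{e-f}$ produces a unimodular binary form whose off-diagonal entry is a unit while both diagonal entries lie in $\frkp$; this is a type II unimodular binary lattice, hence indecomposable, so $L$ is itself indecomposable and $(f-e)$-modular. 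If $f\geq e$, the elementary transformation
\[
B\ \longmapsto\ B\left[\begin{pmatrix}1 & -\vpi^f/2 \\ 0 & 1\end{pmatrix}\right]
\]
diagonalizes $B$ to $(1)\perp(v\vpi^{2f-2e})$ with $v=\vpi^{2e}(4u-1)/4\in\frko^\times$, yielding $\mathrm{Jor}(L)=\{0,2f-2e\}$.

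For case (2), take $\vpi_E=\vpi^g(-1+\sqrt{\vep})/2$ as in the proof of Proposition~\ref{prop:A7}; on the basis $\{1,\vpi^f\vpi_E\}$ of $\frko_{E,f}$ the Gram matrix is $B=\begin{pmatrix}1 & \vpi^{g+f}/2 \\ \vpi^{g+f}/2 & u\vpi^{2f+1}\end{pmatrix}$ with $u=\vpi^{2g-1}(1-\vep)/4\in\frko^\times$. The analysis is word-for-word the same as case (1) with $f$ replaced by $g+f$ in the off-diagonal: when $f<e-g$ the off-diagonal is dominant and $L$ is indecomposable $(f+g-e)$-modular, whereas when $f\geq e-g$ the analogous unimodular operation diagonalizes $B$ to $(1)\perp(v\vpi^{2f+2g-2e})$ with $v=-\vpi^{2e}\vep/4\in\frko^\times$. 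For case (3), the uniformizer $\vpi_E=\sqrt{-\vpi u_0}$ of $E$ satisfies $\mathrm{tr}(\vpi_E)=0$, so the Gram matrix on $\{1,\vpi^f\vpi_E\}$ is already diagonal, namely $B=(1)\perp(u_0\vpi^{2f+1})$, and no transformation is required.

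The only nontrivial step, and hence the main obstacle, is verifying indecomposability in the small-$f$ sub-cases of (1) and (2). After the rescaling above, this reduces to the assertion that a unimodular binary lattice whose Gram matrix has unit off-diagonal entry and non-unit diagonal entries cannot split off an orthogonal rank-one summand; this is exactly the classification of unimodular binary lattices over a dyadic $\frko$ into type I (diagonalizable) and type II (indecomposable) forms, which is the content of Theorem~2.4 of \cite{Cho}.
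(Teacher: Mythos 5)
The proposal is correct in substance and follows essentially the same route as the paper: read the Jordan structure directly from the explicit Gram matrices $B$ of $(\frko_{E,f},\caln)$ constructed in the proof of Proposition~\ref{prop:A7}, apply the elementary diagonalizing transformation $\begin{pmatrix}1 & -\vpi^{\cdot}/2 \\ 0 & 1\end{pmatrix}$ in the ``large $f$'' cases, and in the ``small $f$'' cases rescale and invoke indecomposability of the resulting unimodular binary lattice. Both you and the paper identify that last indecomposability assertion as the crux.

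The one point to flag is your citation for that crux. You appeal to Theorem~2.4 of \cite{Cho}, but \cite{Cho} works throughout under the standing hypothesis that $F/\QQ_2$ is unramified, whereas the whole point of the appendix is to allow arbitrary ramification (the small-$f$ cases only arise when $e>1$!). The paper instead cites Lemma~2.1 of \cite{IK1}, which is stated for a general dyadic local field and is therefore the right reference here. The underlying fact you use --- a unimodular binary lattice of type II (unit bilinear scale, non-unit norm) over any dyadic $\frko$ cannot split off an orthogonal rank-one summand, since a rank-one orthogonal summand of a unimodular lattice would have unit norm --- is true and elementary, so the argument is sound once the citation is corrected. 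One other small omission: when you claim the rescaled form $\vpi^{e-f}B$ (resp.\ $\vpi^{e-g-f}B$) is unimodular, you check the entry valuations but not that the determinant is a unit, which is needed to conclude modularity rather than just $\bfs=\frko$; that computation is easy but should be stated.
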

\begin{proof}
Suppose that $E/F$ is unramified.
As we have seen in the proof of Proposition \ref{prop:A7}, $L$ is expressed by 
$B=\begin{pmatrix} 1 & \vpi^f/2 \\ \vpi^f/2 & u \vpi^{2f} \end{pmatrix}$ for some $u\in\frko^\times$.
If $f<e$, then $B$ is indecomposable by Lemma 2.1 of \cite{IK1}.
In this case, it is easy to see $\vpi^{e-f}B$ is unimodular.
If $f\geq e$, then we have $B\sim (1) \perp ((-1+u\vpi^{2e}) \vpi^{2f-2e})$.
This proves (1).
The other cases can be proved similarly.
\end{proof}

We shall calculate the Gross-Keating invariant $\GK(L\cap\vpi^i L^\sharp)$ for $(L\cap\vpi^i L^\sharp, \vpi^{-i}Q)$ for each $i\in \mathrm{Jor}(L)$.
Recall that the Gross-Keating invariant $(a_1, a_2)$ of a binary form $(L', Q')$ is determined by
\begin{align*}
a_1&=\mathrm{ord}(\bfn(L')), \\
a_1+a_2&=
\begin{cases}
\ord(4\det Q') & \text{ if $\ord(\frkD_{Q'})=0$,}
\\
\ord(4\det Q')-\ord(\frkD_{Q'})+1 & \text{ if $\ord(\frkD_{Q'})>0$.}
\end{cases}
\end{align*}
Here, $\frkD_{Q'}$ is the discriminant of $F(\sqrt{-\det Q'})/F$.
These formula follows form Proposition \ref{prop:6.2}, since a binary quadratic form is isomorphic to some $(\frko_{E,f}, \caln)$ up to multiplication by a unit (\cite{IK1}, Proposition 2.1).
In terms of $B=\begin{pmatrix} b_{11} & b_{12} \\ b_{12} & b_{22}\end{pmatrix}\in \calh_2(\frko)$, the Gross-Keating invariant $(a_1, a_2)$ of $B$ is given by 
\begin{align*}
a_1&=\min\{\mathrm{ord}(b_{11}), \mathrm{ord}(2b_{12}), \mathrm{ord}(b_{22})\}, \\
a_1+a_2&=
\begin{cases}
\ord(4\det B) & \text{ if $\ord(\frkD_{B})=0$,}
\\
\ord(4\det B)-\ord(\frkD_{B})+1 & \text{ if $\ord(\frkD_{B})>0$.}
\end{cases}
\end{align*}
Note also that $\GK(\vpi^i B)=(a_1+i, a_2+i)$.

\begin{proposition} 
\label{prop:A.9}
Suppose that $(L, Q)=(\frko_{E,f}, \caln)$ and $i\in \mathrm{Jor}(L)$.
\begin{itemize}
\item[(1)] 
Assume that $E$ is unramified.
Then we have
\[
\GK(L\cap \vpi^i L^\sharp)=
\begin{cases}
(e-f, e+f) & \text{ if $f< e$,}
\\
(0, 2f) & \text{ if $f\geq e$.}
\end{cases}
\]
\item[(2)] 
Assume that $E$ is ramified and that $d=2g\leq 2e$.
Then, we have
\[
\GK(L\cap \vpi^i L^\sharp)=
\begin{cases}
(e-g-f, e-g+f+1)
 & \text{ if $f<e-g$,}
\\
(0, 2f+1) & \text{ if $f\geq e-g$,}
\end{cases}
\]
\item[(3)] 
Assume that $E$ is ramified and that $d=2e+1$.
Then we have
\[
\GK(L\cap \vpi^i L^\sharp)=
(0, 2f+1).
\]
\end{itemize}
\end{proposition}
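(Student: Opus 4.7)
The plan is to compute $\GK(L\cap \vpi^iL^\sharp)$ directly from the binary-form formulas. Writing the normalized quadratic lattice $(L\cap \vpi^iL^\sharp,\vpi^{-i}Q)$ as a half-integral matrix $B^{(i)}\in \calh_2(\frko)$, Definition \ref{def:2.1} (together with Theorem \ref{thm:2.1} and the explicit description following Definition \ref{def:0.3}) gives
\[
a_1(B^{(i)}) = \min\{\ord(b^{(i)}_{11}),\,\ord(2b^{(i)}_{12}),\,\ord(b^{(i)}_{22})\},
\]
and $a_1(B^{(i)})+a_2(B^{(i)}) = \ord(D_{B^{(i)}})$ when $\ord(\mathfrak{D}_{B^{(i)}})=0$, or $\ord(D_{B^{(i)}})-\ord(\mathfrak{D}_{B^{(i)}})+1$ otherwise. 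Since scaling $B^{(i)}$ by a power of $\vpi$ multiplies $D_{B^{(i)}}$ by the corresponding even power, the quadratic algebra $F(\sqrt{D_{B^{(i)}}})=E$ is independent of $i$; hence $\ord(\mathfrak{D}_{B^{(i)}})=d$ equals the discriminant exponent of $E/F$ occurring in Lemma \ref{lem:A10} throughout.

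With this in place I walk through the three cases of Lemma \ref{lem:A10}. In the indecomposable subcases (small $f$), $L$ is itself $i$-modular, so $\vpi^i L^\sharp = L$ and $L\cap \vpi^iL^\sharp = L$, with $B^{(i)} = \vpi^{-i}B$ and $B$ the matrix for $(\frko_{E,f},\caln)$ already written down in the proof of Proposition \ref{prop:A7}. In the decomposable subcases (large $f$), Lemma \ref{lem:A10} supplies $L\sim (1)\perp(u\vpi^j)$, so the dual splits as $L_0\perp \vpi^{-j}L_j$; for $i=0$ this yields $L\cap L^\sharp = L$ with diagonal normalized matrix $\diag(1,u\vpi^j)$, and for $i=j$ it yields $\vpi^jL_0\perp L_j$ with normalized matrix $\diag(\vpi^j,u)$. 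In every subcase $a_1$ is read off as the minimum of three entries and $\ord(\det B^{(i)})$ reduces to a one-line calculation; substituting into the formula above delivers the stated pair $(a_1,a_2)$.

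The step that will require the most care is the ramified subcase with $d=2g\le 2e$ and $f<e-g$. There the entries of $\vpi^{e-g-f}B$ have orders $e-g-f$, $e$, and $e-g+f+1$, and one must verify, using $g\le e$ and $f\ge 0$, that the first of these is genuinely the minimum, so that $a_1=e-g-f$. The correction $-d+1 = 1-2g$ then combines with $\ord(D_{\vpi^{e-g-f}B})=2(e-g-f)+\ord(D_B)=2e$ to give $a_1+a_2=2e-2g+1=(e-g-f)+(e-g+f+1)$, and hence $a_2=e-g+f+1$ as claimed. The analogous bookkeeping in the unramified small-$f$ subcase ($a_1+a_2=2e$, no correction) and in the ramified $d=2e+1$ case is a simpler version of the same argument, and all decomposable subcases collapse into the diagonal computation indicated above.
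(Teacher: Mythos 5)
Your proposal is correct and follows essentially the same route as the paper: decompose into the cases of Lemma \ref{lem:A10}, identify $(L\cap\vpi^iL^\sharp,\vpi^{-i}Q)$ explicitly (equal to $(L,\vpi^{-i}Q)$ in the indecomposable subcases; $\diag(1,u\vpi^j)$ or $\diag(\vpi^j,u)$ in the decomposable ones), and read off $(a_1,a_2)$ from the binary-form formulas displayed just before the proposition. The paper's proof is a bit more economical in the decomposable subcases, observing that $(1)\perp(u\vpi^k)$ and $(u)\perp(\vpi^k)$ are \emph{weakly equivalent} to $(L,Q)$, so that $\GK(L\cap\vpi^iL^\sharp)=\GK(L)=(0,2f)$ or $(0,2f+1)$ follows at once from Proposition \ref{prop:6.2}, and in the indecomposable subcases it simply applies the shift law $\GK(\vpi^{-i}B)=\GK(B)-(i,i)$ to the known $\GK(L)$; but this is just a streamlined version of the $a_1$, $a_1+a_2$ bookkeeping you spell out. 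Your explicit check of the most delicate subcase ($E$ ramified, $d=2g\le 2e$, $f<e-g$) is accurate: $a_1=e-g-f$ is indeed the minimum of the three orders, and $a_1+a_2=\ord(D)-d+1=2e-2g+1$ gives $a_2=e-g+f+1$. One small imprecision: you justify $F(\sqrt{D_{B^{(i)}}})=E$ by saying that scaling $B^{(i)}$ by a power of $\vpi$ multiplies $D_{B^{(i)}}$ by an even power, but in the decomposable subcases $B^{(i)}$ is not literally $\vpi^{-i}B$; the correct (and still easy) justification is that the quadratic \emph{space} $(V,\vpi^{-i}Q)$ has the same discriminant class as $(V,Q)$, so the attached quadratic algebra is unchanged for any full lattice in it. This does not affect the correctness of the conclusion.
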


\begin{proof}
Suppose that $L$ is $i$-modular.
In this case, $L\cap \vpi^i L^\sharp=L$.
Then $\GL(L\cap \vpi^i L^\sharp)=(a_1-i, a_2-i)$, where $(a_1, a_2)=\GL(L)$.
(Remember that the quadratic form for $L\cap\vpi^i L^\sharp$ is multiplied by $\vpi^{-i}$.)

Suppose that $L\sim (1)\perp (u\vpi^k)$.
In this case, $\mathrm{Jor}(L)=\{0, k\}$ and $(L\cap \vpi^i L^\sharp, \vpi^{-i} Q)$ is expressed by $(1)\perp (u\vpi^k)$ or $(u)\perp (\vpi^k)$, according as $i=0$ or $i=k$.
In either case, $(L\cap \vpi^i L^\sharp, \vpi^{-i} Q)$ is weakly equivalent to $(L, Q)$.
Hence the proposition.
\end{proof}

For $B\in \calh_n(\frko)$, we define $\EGK(B)^{\leq 1}$ as in the main part of this paper.
This is defined as follows.
Let $\GK(B)=(\underbrace{0, \ldots, 0}_{m_0}, \underbrace{1, \ldots, 1}_{m_1}, a_{m_0+m_1+1}, \ldots, a_n)$, where $a_{m_0+m_1+1}>1$.
If $B$ is equivalent to a reduced form
\[
\begin{array}{ccccc}
&\hskip -32pt
\overbrace{\hphantom{B_{11}}}^{m_0} \;
\overbrace{\hphantom{ B_{12}}}^{m_1}  \;
\overbrace{\hphantom{ B_{15}}}^{n-m_0-m_1} \\ & 
B'=\left(
\begin{array}{cccl}
B_{00} & B_{01} & B_{02}  \\
{}^t\! B_{01} & B_{11} & B_{12}  \\
{}^t\! B_{02} & {}^t\! B_{12} & B_{22}  \\
\end{array} \hskip -0pt
\right) \hskip -5pt
\begin{array}{l}
 \left.\vphantom{B_{11}} \right\} \text{\footnotesize${m_0}$} \\
 \left.\vphantom{B_{11}} \right\} \text{\footnotesize${m_1}$} \\
 \left.\vphantom{B_{11}} \right\} \text{\footnotesize${n-m_0-m_1}$,} 
\end{array}
\end{array}
\]
then $\EGK(B)^{\leq 1}=\EGK\left(\begin{pmatrix} B_{00} & B_{11} \\ {}^t\! B_{01} & B_{11}\end{pmatrix}\right)$.
This definition does not depend on the choice of the reduced form $B'$.
If $B$ is associated to a quadratic lattice $M$, we write $\EGK(M)^{\leq 1}$ for $\EGK(B)^{\leq 1}$.

The next proposition follows from Proposition \ref{prop:A.9}.
\begin{proposition}
\label{prop:A.11}
Suppose that $(L, Q)=(\frko_{E,f}, \caln)$ and $i\in \mathrm{Jor}(L)$.
\begin{itemize}
\item[(1)] 
Assume that $E$ is unramified.
Then we have 
\[
\EGK(L\cap \vpi^i L^\sharp)^{\leq 1}=
\begin{cases}
\emptyset & \text{ if $f<e-1$,}
\\
(2;1;\xi) & \text{ if $f=0$, $e=1$, }
\\
(1;1;1) & \text{ if $f=e-1$, $e>1$, }
\\
(1;0;1) & \text{ if $f\geq e$.}
\end{cases}
\]
\item[(2)] 
Assume that $E$ is ramified and that $d=2g\leq 2e$.
Then, we have
\[
\EGK(L\cap \vpi^i L^\sharp)^{\leq 1}=
\begin{cases}
\emptyset & \text{ if $f<e-g-1$,}
\\
(1;1;1)
 & \text{ if $f=e-g-1$,}
\\
(1;0;1) & \text{ if $f\geq e-g$, \, $g<e$,}
\\
(1;0;1) & \text{ if $f>0$, \, $g=e$,}
\\
(1,1;0,1;1,0) & \text{ if $f=0$, \, $g=e$.}
\end{cases}
\]
\item[(3)] 
Assume that $E$ is ramified and that $d=2e+1$.
Then we have
\[
\EGK(L\cap \vpi^i L^\sharp)^{\leq 1}=
\begin{cases}
(1,1;0,1;1,0) & \text{ if $f=0$,}
\\
(1;0,1) & \text{ if $f>0$.}
\end{cases}
\]
\end{itemize}
\end{proposition}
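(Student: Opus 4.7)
The plan is to combine Proposition \ref{prop:A.9}, which supplies $\GK(L\cap \vpi^i L^\sharp)$ explicitly in every case, with the defining property of $\EGK(\cdot)^{\leq 1}$ as the extended GK datum of the sub-block of a reduced form corresponding to entries of the Gross--Keating invariant at most $1$.  Because $L\cap \vpi^i L^\sharp$ has rank $2$, this sub-block has rank $0$, $1$, or $2$, and its rank is just the number of entries $\leq 1$ in $\GK(L\cap\vpi^i L^\sharp)$.  The first step is therefore to count such entries case by case using Proposition \ref{prop:A.9}.

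A rank-$0$ truncation gives $\EGK^{\leq 1}=\emptyset$, and a rank-$1$ truncation gives $(1;0;1)$ or $(1;1;1)$ according as the surviving entry is $0$ or $1$; here the Clifford invariant $\eta$ of a $1\times 1$ form is trivially $1$, so there is nothing further to compute.  These observations already dispatch almost every branch of the proposition: in the unramified case, both entries $(e-f,e+f)$ are $\geq 2$ precisely when $f<e-1$, exactly one equals $1$ precisely when $f=e-1$ with $e>1$, and the first drops to $0$ with the second still $\geq 2$ precisely when $f\geq e$; the ramified cases with $d=2g\leq 2e$ and with $d=2e+1$ are entirely analogous, with the entries read off directly from Proposition \ref{prop:A.9}.

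The only branches not covered by the preceding paragraph are the three boundary cases where $\GK(L\cap\vpi^i L^\sharp)\in\{(1,1),(0,1)\}$, namely unramified with $e=1$, $f=0$, and ramified with $f=0$ in either $g=e$ or $d=2e+1$.  In each of them the truncated block is the entire binary form $L\cap\vpi^i L^\sharp$, which coincides with $L$ itself and is weakly equivalent to $(\frko_E,\caln)$.  For $\GK=(1,1)$, the unique admissible involution is the swap (the identity fails condition (i) of Definition \ref{def3.1} since both entries are odd), giving $\EGK=(2;1;\xi_B)$; Proposition \ref{prop:6.1} identifies $F(\sqrt{D_B})=E$, whence $\xi_B=\xi$.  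For $\GK=(0,1)$, the identity involution is admissible (the entries have opposite parities), giving $\EGK=(1,1;0,1;\eta_{B^{(1)}},\xi_B)$; the $1\times 1$ Clifford invariant equals $1$, and $\xi_B=0$ because $E/F$ is ramified.  The main obstacle is therefore bookkeeping rather than computation: one must match each subcase of Proposition \ref{prop:A.9} to the correct branch of the statement and read off $\xi_B$ from the already-known ramification type of $E/F$ via Proposition \ref{prop:6.1}.
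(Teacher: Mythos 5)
Your proposal is correct and takes the same route as the paper (the paper's proof is simply the sentence ``The next proposition follows from Proposition \ref{prop:A.9}''; you are filling in precisely the bookkeeping that sentence elides). The case analysis by rank of the truncated block is right, the observations that a rank-$1$ truncation always carries $\zeta_1=\eta=1$ and that a rank-$0$ truncation gives $\emptyset$ are correct, and the three rank-$2$ boundary cases are treated properly, including the verification that the swap (resp.\ the identity) is the admissible involution for $\GK=(1,1)$ (resp.\ $\GK=(0,1)$) and the identification $F(\sqrt{D_B})=E$ via Proposition \ref{prop:6.1} so that $\xi_B=\xi$. One small imprecision: in the branch $d=2e+1$, $f=0$, one has $\mathrm{Jor}(L)=\{0,1\}$, and for $i=1$ the lattice $L\cap\vpi L^\sharp$ is a proper sublattice of $L$ rather than ``$L$ itself''; however, as recorded in the proof of Proposition \ref{prop:A.9}, $(L\cap\vpi^i L^\sharp,\vpi^{-i}Q)$ is still weakly equivalent to $(L,Q)$, and since your argument only uses invariants stable under weak equivalence ($\GK$ and the field $F(\sqrt{D_B})$), the conclusion is unaffected.
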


We shall show that there exist two binary quadratic lattices $L$ and $L'$, which satisfy the following conditions (1), (2), and (3).
\begin{itemize}
\item[(1)] $\GK(L\perp -L)=\GK(L'\perp -L')$.
\item[(2)] $\mathrm{Jor}(L)=\mathrm{Jor}(L')$ and $\EGK(L\cap \vpi^i L^\sharp)^{\leq 1}=\EGK(L'\cap \vpi^i L'^\sharp)^{\leq 1}$ for each $i\in \mathrm{Jor}(L)$.
\item[(3)] $\beta(L)\neq \beta(L')$.
\end{itemize}
\begin{example}
\label{ex:A1}
Suppose that $e=5$.
Suppose also that $E/F$ is a ramified quadratic extension with $d=2$ and $E'/F$ is a ramified quadratic extension with $d=4$.
Put $L=\frko_{E, 2}$ and $L'=\frko_{E', 1}$.
Then we have 
\[
\GK(L\perp -L)=\GK(L'\perp -L')=(0,3,3,6)
\]
by Proposition \ref{prop:A.8}.
Note that $\mathrm{Jor}(L)=\mathrm{Jor}(L')=\{-2
\}$ 
and 
\[
\EGK(L\cap \vpi^{-1} L^\sharp)^{\leq 1}=\EGK(L'\cap \vpi^{-1} L'^\sharp)^{\leq 1}=\emptyset
\qquad (i\in \mathrm{Jor}(L))
\]
by Proposition \ref{prop:A.11}.
But we have
\[
\bet(L)=q^7, 
\qquad 
\bet(L')=2q^7
\]
by Proposition \ref{prop:A6}.
Thus $\GL(L\perp -L)$ and $\EGK(L\cap \vpi^i L^\sharp)^{\leq 1}$ are not enough to determine $\bet(L)$ in the case $e>1$.
\end{example}


\end{document}